\documentclass{emsprocart}
\usepackage{wasysym}


\contact[brundan@uoregon.edu]{Jonathan Brundan, Department of
  Mathematics, University of Oregon, Eugene, OR 97403, USA}


\numberwithin{equation}{section}

\newtheorem{Theorem}{Theorem}[section]
\newtheorem{Corollary}[Theorem]{Corollary}
\newtheorem{Lemma}[Theorem]{Lemma}


\theoremstyle{definition}

\renewcommand{\t}{\mathtt{T}}
\newcommand{\Pol}{P\!ol}
\newcommand{\Sym}{S\hspace{-0.2mm}ym}
\newcommand{\K}{\mathbb{K}}
\newcommand{\Z}{\mathbb{Z}}
\newcommand{\N}{\mathbb{N}}
\newcommand{\Q}{\mathbb{Q}}
\newcommand{\C}{\mathbb{C}}
\newcommand{\Hom}{\operatorname{Hom}}
\newcommand{\End}{\operatorname{End}}

\newcommand{\Dim}{\operatorname{Dim}}
\newcommand{\KP}{\operatorname{KP}}
\newcommand{\height}{\operatorname{ht}}
\newcommand\NH{N\!H}
\newcommand{\W}{\langle I \rangle}
\newcommand{\Rep}{\operatorname{Rep}}
\newcommand{\Proj}{\operatorname{Proj}}
\newcommand{\Ch}{\operatorname{Ch}}
\newcommand{\ch}{\operatorname{ch}}
\newcommand{\Ind}{\operatorname{Ind}}
\newcommand{\Res}{\operatorname{Res}}
\def\homog{\Omega}

\def\f{\mathbf{f}}
\def\bi{\text{\boldmath$i$}}
\def\bj{\text{\boldmath$j$}}
\def\bk{\text{\boldmath$k$}}

\def\b{\mathtt{b}}

\title[Quiver Hecke Algebras]{Quiver Hecke Algebras and Categorification}

\author[Jonathan Brundan]{Jonathan Brundan\thanks{Supported in part by
    NSF grant no. DMS-1161094.}}

\begin{document}

\begin{abstract}
This is a brief introduction to the 
quiver Hecke algebras of Khovanov, Lauda and Rouquier, emphasizing their application to the
categorification of quantum groups.
The text is based on lectures given by the author at the 
ICRA workshop in Bielefeld in August, 2012.
\end{abstract}

\begin{classification}
16E05, 16S38, 17B37.
\end{classification}

\begin{keywords}
Quiver Hecke algebras, Quantum groups.
\end{keywords}

\maketitle

\section{Introduction}

Over a century ago
Frobenius and Schur understood that
the complex representation theory of the symmetric groups $S_n$ for all $n \geq 0$
categorifies
the graded Hopf algebra of symmetric functions 
$\Sym$. The language being used here is much more recent! It means the following.
Let $\Rep(\C S_n)$ denote the category of finite dimensional $\C S_n$-modules
and $S(\lambda)$ be the irreducible Specht module indexed by partition $\lambda \vdash n$.
The isomorphism classes $\{[S(\lambda)]\:|\:\lambda \vdash n\}$ give a distinguished basis for 
Grothendieck group $[\Rep(\C S_n)]$ of this semisimple 
category.
Given an $S_m$-module $V$ and an $S_n$-module $W$, we can form their
induction product $V \circ W := \operatorname{Ind}_{S_m \times S_n}^{S_{m+n}} V \boxtimes W$. This operation descends to the Grothendieck groups to give a multiplication
making 
$$
[\Rep(\C S)] := \bigoplus_{n \geq 0} [\Rep(\C S_n)]
$$
into a graded algebra. 
Moreover the restriction functors $\Res^{S_{m+n}}_{S_m \times S_n}$ for all $m, n \geq 0$ induce a comultiplication 
on $[\Rep(\C S)]$, making it into a graded Hopf algebra.
The categorificiation theorem asserts that it is isomorphic as a graded Hopf algebra to $\Sym$, the canonical isomorphism sending 
$[S(\lambda)]$ to the {\em Schur function}
$s_\lambda \in \Sym$. 

There have been many variations and generalizations of this result since then. 
Perhaps the most relevant for this article comes from the work of Bernstein and Zelevinsky in the late 1970s on the representation theory of
the affine Hecke algebra $H_n$ associated to the general linear group
$GL_n(F)$ over a non-archimedean local field $F$ (e.g. see \cite{Zel1}).
This is even richer algebraically since, unlike $\C S_n$, its finite dimensional representations are no longer completely reducible.
Bernstein and Zelevinsky showed that the direct sum
$$
[\operatorname{Rep}(H)] := \bigoplus_{n \geq 0}
[\operatorname{Rep}(H_n)]
$$
of the Grothendieck groups of the 
categories of finite dimensional $H_n$-modules
for all $n \geq 0$ again has a natural structure of graded Hopf algebra.
Moreover this graded Hopf algebra 
is isomorphic to the coordinate algebra
$\Z[N]$ of a certain direct limit $N$ 
of groups of upper unitriangular matrices.
In \cite{Zel2}, Zelevinsky went on to formulate a $p$-adic analogue of
the Kazhdan-Lusztig conjecture, which was proved by Ginzburg
\cite{CG}.
Zelevinsky's conjecture implies that the basis for
$\Z[N]$ arising from the isomorphism classes of irreducible
$H_n$-modules for all $n \geq 0$ coincides with the Lusztig-Kashiwara
dual canonical basis.
The dual version of this theorem was proved by Ariki in
\cite{Ar}, who
also investigated certain finite dimensional quotients of $H_n$ called
{\em cyclotomic Hecke algebras}, which categorify the integrable
highest weight modules of the corresponding Lie algebra
$\mathfrak{g}
 = \mathfrak{sl}_\infty$.
Ariki's work includes the case that the defining parameter of
$H_n$ is a primitve $p$th root of unity, when $\mathfrak{sl}_\infty$ is replaced by the
affine Kac-Moody algebra
$\widehat{\mathfrak{sl}}_p$.

Quiver Hecke algebras were discovered independently in 2008 
by Khovanov and Lauda \cite{KL1, KL2} and Rouquier \cite{R1}.
They are certain Hecke algebras attached to symmetrizable Cartan
matrices.
It appears that Khovanov and Lauda came upon these algebras 
from an
investigation of endomorphisms of Soergel bimodules (and
related bimodules which arise from cohomology of partial flag varieties), while Rouquier's motivation was a
close analysis of Lusztig's construction of canonical bases
in terms of perverse sheaves on certain quiver varieties. 
In a perfect analogy with the Bernstein-Zelevinsky theory just
described, these algebras categorify the coordinate algebra of
the unipotent group $N$ associated to a maximal nilpotent subalgebra
of the Kac-Moody
algebra $\mathfrak{g}$ arising
from the given Cartan matrix. 

In fact the picture is even better: the
quiver Hecke algebras are naturally $\Z$-graded, so that the
Grothendieck groups of their categories of finite dimensional graded
representations
are $\Z[q,q^{-1}]$-modules, with $q$ acting by degree shift. The
resulting ``graded'' Grothendieck groups categorify a
$\Z[q,q^{-1}]$-form for the quantum group $\mathbf{f}$ that is half of
the quantized enveloping algebra $U_q(\mathfrak{g})$.
Moreover there is an analogue of Ariki's theorem, conjectured
originally by
Khovanov--Lauda and proved by 
Varagnolo-Vasserot \cite{VV} and Rouquier \cite[Corollary 5.8]{R2} using
geometric methods in the spirit of \cite{CG}. 
There are even
cyclotomic quotients of the quiver Hecke algebras which 
Kang-Kashiwara \cite{KK}, Rouquier \cite[Theorem
4.25]{R2} and Webster \cite{W1} have used to categorify 
integrable highest weight modules.
Rouquier also observed in (finite or affine) type A that 
the quiver Hecke algebras become isomorphic to the affine Hecke
algebras discussed earlier (at a generic parameter or a root of unity) when suitably
localized (see \cite[Proposition 3.15]{R1} and also \cite{BK1} in the cyclotomic
setting). 
Thus Ariki's theorem is a special case of the Rouquier-Varagnolo-Vasserot categorificiation
theorem just mentioned (see \cite{BK2}).

Even more variations on the quiver Hecke algebras have subsequently
emerged, including a twisted version related to affine Hecke algebras of type
B introduced by Varagnolo and Vasserot \cite{VV2}, and the quiver Hecke
superalgebras of Kang, Kashiwara and Tsuchioka \cite{KKT}.
The latter 
superalgebras generalize Wang's spin Hecke algebras
\cite{Wang} and the odd nil Hecke algebra of  Ellis, Khovanov and Lauda \cite{EKL},
and give a completely new ``supercategorification'' of the same
quantum groups/highest weight modules as above (see \cite{KKO}). 
We also mention the work \cite{KKK} which connects quiver Hecke
algebras to quantum affine algebras, potentially providing a direct
algebraic link between the categorifications of $\Z[N]$ arising via quiver Hecke algebras
and the ones introduced by Hernandez and Leclerc in \cite{HL}.

For the future perhaps the most exciting development arising from these new algebras is the
introduction again by Khovanov-Lauda \cite{KL3} and Rouquier \cite{R2}
of certain $2$-categories called {\em $2$-Kac-Moody algebras}. These 
categorify Lusztig's idempotented version $\dot U_q(\mathfrak{g})$ of
the quantized enveloping algebra of $\mathfrak{g}$ (see \cite{KL3, W1, W2}). In the case
$\mathfrak{g} = \mathfrak{sl}_2$ this goes back to work of 
Chuang-Rouquier \cite{CR} 
and Lauda \cite{Lauda}.
In the introduction of \cite{R1}, Rouquier promises to define 
a tensor product on the 2-category of dg $2$-representations of the
$2$-Kac-Moody algebra, the ultimate goal being to construct 4-dimensional TQFTs
in fulfillment of predictions made long ago by Crane and Frenkel
\cite{CF}. Webster has also suggested a more down-to-earth
diagrammatic approach to constructing categorifications of tensor products 
of integrable highest weight modules
in finite types in \cite{W1}.

In this article we will not discuss at all any of these higher themes,
aiming instead to give a gentle and self-contained introduction to the 
quiver Hecke algebras and their connection to Lusztig's algebra $\f$,
focussing just on the case of symmetric Cartan matrices for simplicity.
In the last section of the article we specialize further to
finite type and explain some of the interesting homological properties
of quiver Hecke algebras in that setting, similar in spirit to those
of a quasi-hereditary algebra, despite being infinite-dimensional.
As we go we have included proofs or sketch proofs of many of the 
foundational results, before switching into full survey mode later on.
To improve readability,  references to the literature are deferred to the end of each
section. 

\section{Quiver Hecke algebras}

In this opening section, we give a general introduction to the
definition and structure of quiver Hecke algebras. 

\subsection*{Gradings}
Fix once and for all an algebraically closed ground field $\K$.
Everything (vector spaces, algebras, modules, \dots) will be
$\Z$-graded.
For a graded vector space $V = \bigoplus_{n \in \Z} V_n$, 
its graded dimension is
$\Dim V:=\sum_{n \in \Z} (\dim V_n) q^n$, where $q$ is a formal variable.
Of course this only makes sense if $V$ is {\em locally finite
  dimensional}, i.e. the graded pieces of $V$ are finite dimensional.
Typically $V$ will also be {\em bounded below}, i.e. $V_n = 0$ for $n
\ll 0$, in which case $\Dim V$ is a formal Laurent series in $q$.
We write $q^m V$ for the
upward degree shift by $m$ steps, so $q^m V$ is the graded vector space with 
$(q^m V)_n :=
V_{n-m}$, and then $\Dim q^m V = q^m \Dim V$.
More generally, for $f(q) = \sum_{n \in \Z} f_n q^n$,
we write $f(q) V$ for $\bigoplus_{n \in \Z} (q^n V)^{\oplus n}$.
Finally we write $\Hom(V, W)$ for the graded vector space 
$\bigoplus_{n \in \Z} \Hom(V, W)_n$, where $\Hom(V,W)_n$ denotes the 
linear maps $f:V \rightarrow W$ that are homogeneous of degree $n$, 
i.e. they map $V_m$ into $W_{m+n}$.
Note then that $\End(V) := \Hom(V,V)$ is a graded algebra.

\subsection*{Demazure operators}

Recall that the symmetric group $S_n$ is generated by the basic
transpositions
$t_1,\dots,t_{n-1}$ subject only to the {braid relations}
$t_i t_{i+1} t_i =
t_{i+1} t_i t_{i+1}$ and $t_i t_j = t_j t_i$ for $|i-j|>1$, plus
the quadratic relations $t_i^2 = 1$.
The {\em length} $\ell(w)$ of $w \in S_n$ is $\#\{1 \leq i < j \leq
n\:|\:w(i) > w(j)\}$.
We will denote the longest element of $S_n$ by $w_{[1,n]}$. This is
the permutation $1 \mapsto n$, $2 \mapsto n-1$, $3 \mapsto
n-2$, \dots, $n \mapsto 1$, which
is of length $\frac{1}{2}n(n-1)$.
Letting $[n] := \frac{q^n - q^{-n}}{q-q^{-1}}$ and $[n]! := [n] [n-1]
\cdots [2][1]$, we have the well-known factorization of the
Poincar\'e polynomial of $S_n$:
\begin{equation}\label{poincare}
\sum_{w \in S_n} q^{2 \ell(w)} = q^{\frac{1}{2}n(n-1)} [n]!.
\end{equation}

Let $S_n$ act on the polynomial algebra
$\Pol_n := \K[x_1,\dots,x_n]$ by permuting the variables. Viewing $\Pol_n$
as a graded algebra with each $x_i$ in degree $2$, this is an action by graded algebra automorphisms.
So the invariants 
form a graded subalgebra $\Sym_n := \Pol_n^{S_n}$, namely,
the algebra of
{\em symmetric polynomials}. This is again a free polynomial algebra
generated by the 
{elementary symmetric polynomials} 
$e_r := \sum_{1 \leq i_1 < \cdots < i_r\leq n} x_{i_1}\cdots x_{i_r}$
for $r=1,\dots,n$, hence
\begin{equation}\label{symdim}
\Dim \Sym_n = \frac{1}{(1-q^2)(1-q^4)\cdots(1-q^{2n})}.
\end{equation}
For $i=1,\dots,n-1$, we have the {\em Demazure operator}
\begin{equation}
\partial_i:\Pol_n \rightarrow \Pol_n,\qquad
f \mapsto \frac{t_i(f) - f}{x_i - x_{i+1}}.
\end{equation}
This is a homogeneous linear map of degree $-2$
such that $\partial_i(fg) = \partial_i(f) g + t_i(f) \partial_i(g)$.
From this identity, it is easy to see that $\partial_i$ is a
$\Sym_n$-module homomorphism.
The endomorphisms
$\partial_1,\dots,\partial_{n-1}$ satisfy the same braid relations
as in the symmetric group, hence for each $w \in S_n$
there is a well-defined operator
$\partial_w \in \End(\Pol_n)_{-2\ell(w)}$ such that
$\partial_w = \partial_{i_1} \cdots \partial_{i_k}$
if $w = t_{i_1}\cdots t_{i_k}$ is a 
{\em reduced expression} for $w$, i.e. $k=\ell(w)$.
Moreover we have that $\partial_i^2 = 0$ for each $i=1,\dots,n-1$.

\begin{Theorem}\label{dem}
The polynomial algebra $\Pol_n$ is a free 
$\Sym_n$-module of rank $n!$, with basis $\left(b_w\right)_{w \in S_n}$
defined from $b_w := \partial_w (x_2 x_3^2 \cdots x_n^{n-1})$.
Each $b_w$ is homogeneous of degree $n(n-1) - 2 \ell(w)$, and $b_{w_{[1,n]}} = 1$.
\end{Theorem}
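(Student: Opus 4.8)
The plan is to verify three things: (i) the elements $b_w$ are homogeneous of the claimed degree; (ii) $b_{w_{[1,n]}}=1$; and (iii) the $b_w$ form an $\Sym_n$-basis of $\Pol_n$. The degree claim is immediate: $x_2 x_3^2\cdots x_n^{n-1}$ has degree $2(1+2+\cdots+(n-1)) = n(n-1)$, and $\partial_w$ has degree $-2\ell(w)$, so $b_w$ is homogeneous of degree $n(n-1)-2\ell(w)$, with $b_{w_{[1,n]}}$ in degree $0$, i.e.\ a scalar. To pin down that scalar I would compute $\partial_{w_{[1,n]}}(x_2 x_3^2\cdots x_n^{n-1})$ directly. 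One clean way: use a reduced word for $w_{[1,n]}$ and the classical fact that $\partial_{w_{[1,n]}}$ applied to a monomial can be evaluated via iterated divided differences; alternatively, induct on $n$, peeling off the factor $x_n^{n-1}$ and the sub-longest-element $w_{[1,n-1]}$, using the Leibniz rule $\partial_i(fg)=\partial_i(f)g + t_i(f)\partial_i(g)$ together with the fact that $\partial_i$ is $\Sym_n$-linear. I expect the induction to yield $b_{w_{[1,n]}}=1$ after a short bookkeeping argument; this is step (ii).

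For the basis statement (iii), the strategy is a dimension/rank count combined with a triangularity argument. First, $\Pol_n$ is known to be a free $\Sym_n$-module: one can cite or quickly reprove this via the graded Nakayama/Hilbert-series argument, since $\Dim \Pol_n = \frac{1}{(1-q^2)^n}$ and $\Dim\Sym_n = \frac{1}{(1-q^2)(1-q^4)\cdots(1-q^{2n})}$, so the ``rank'' $\Dim\Pol_n / \Dim\Sym_n = \prod_{k=1}^n \frac{1-q^{2k}}{1-q^2} = \prod_{k=1}^n (1+q^2+\cdots+q^{2(k-1)})$, which at $q=1$ equals $n!$; moreover, replacing $q^2$ by $q$ and comparing with \eqref{poincare}, this rank polynomial equals $\sum_{w\in S_n} q^{\ell(w)}$, i.e.\ the $b_w$ have exactly the right degrees to be a homogeneous basis. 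So it suffices to show the $b_w$ \emph{generate} $\Pol_n$ as an $\Sym_n$-module (graded freeness of $\Pol_n$ plus matching Hilbert series then forces them to be a basis). Equivalently, since there are $n!$ of them in the correct degrees, it is enough to show their images span $\Pol_n / \Sym_n^+ \Pol_n$ (the coinvariant algebra), a space of dimension $n!$ with graded dimension $\sum_w q^{2\ell(w)}$.

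To prove spanning I would set up a triangularity with respect to the Schubert-polynomial basis of the coinvariant algebra. Recall the Schubert polynomials $\mathfrak{S}_w$ are defined by $\mathfrak{S}_{w_{[1,n]}} = x_1^{n-1}x_2^{n-2}\cdots x_{n-1}$ and $\mathfrak{S}_{w t_i} = \partial_i \mathfrak{S}_w$ whenever $\ell(wt_i)<\ell(w)$; they descend to a basis of the coinvariant algebra. The element $x_2 x_3^2\cdots x_n^{n-1}$ is the image of $x_1^{n-1}x_2^{n-2}\cdots x_{n-1}$ under the automorphism $x_i \mapsto x_{n+1-i}$, and this automorphism intertwines $\partial_i$ with $\partial_{n-i}$; hence $b_w = \partial_w(x_2\cdots x_n^{n-1})$ is, up to this relabelling, obtained from the top Schubert polynomial by applying $\partial_w$. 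Tracking the relabelling $w \mapsto w_{[1,n]} w w_{[1,n]}$, one identifies $b_w$ (modulo lower-order terms / the invariants ideal) with $\pm\mathfrak{S}_{w_{[1,n]}w}$ or a similar re-indexing of Schubert polynomials; since those form a basis of the coinvariants, so do the $b_w$. The main obstacle is precisely getting this combinatorial identification clean: keeping careful track of which permutation indexes $b_w$ under the variable-reversal, and confirming that $\partial_u \partial_v = \partial_{uv}$ is being used only when lengths add (so that the recursion for $b_w$ genuinely matches the Schubert recursion rather than collapsing to zero). Once the indexing is nailed down, freeness of $\Pol_n$ over $\Sym_n$ together with the Hilbert-series match upgrades ``spanning'' to ``basis'', completing the proof.
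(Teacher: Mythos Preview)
Your proposal is correct but proceeds quite differently from the paper, and it leans on heavier external input than is needed.

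For step (ii) you and the paper agree: induct on $n$ using the Leibniz rule. (The paper factors $w_{[1,n]} = t_{n-1}\cdots t_1\, w_{[2,n]}$ and peels off the symmetric factor $x_2\cdots x_n$, which $\partial_{w_{[2,n]}}$ commutes past.)

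For step (iii) the paper takes the opposite tack from yours: it proves \emph{linear independence} first, by a one-line trick, and then gets spanning from the Hilbert-series identity. Namely, given a relation $\sum_w p_w b_w = 0$ with $p_w \in \Sym_n$, pick $w$ of minimal length with $p_w \neq 0$, write $w_{[1,n]} = w' w$, and apply $\partial_{w'}$. Since the $\partial_i$ satisfy the braid relations and $\partial_i^2 = 0$, one has $\partial_{w'}\partial_v = 0$ unless $\ell(w'v) = \ell(w')+\ell(v)$, which for $\ell(v)\ge \ell(w)$ forces $v=w$; then $\partial_{w'} b_w = b_{w_{[1,n]}} = 1$ by step (ii), yielding $p_w = 0$, a contradiction. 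The dimension count $\Dim(\bigoplus_w \Sym_n\, b_w) = \Dim \Pol_n$ then finishes.

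Your route instead proves \emph{spanning} by identifying the $b_w$, via the variable-reversal automorphism, with (signed, reindexed) Schubert polynomials, and then invokes the fact that Schubert polynomials form a basis of the coinvariant algebra. This is fine, and the sign coming from $\sigma^{-1}\partial_i\sigma = -\partial_{n-i}$ is harmless for the basis statement. But note that the Schubert-basis theorem you are citing is essentially the same theorem in a different guise; the paper's approach is entirely self-contained and avoids importing that result. If you want to keep your route honest, you should either give an independent argument that the coinvariant algebra has dimension $n!$ (Chevalley), or simply adopt the paper's $\partial_{w'}$ trick for linear independence, which is shorter than the Schubert detour and uses only $b_{w_{[1,n]}}=1$ and the nil-Coxeter relations you already have.
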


\begin{proof}
We first show by induction on $n$ that $b_{w_{[1,n]}} = 1$.
Let $w_{[2,n]}$ denote 
the longest element of $S_{n-1}$ embedded into $S_n$ as the permutations fixing $1$,
so that $w_{[1,n]} = t_{n-1}\cdots t_1 w_{[2,n]}$.
Then:
\begin{align*}
\partial_{w_{[1,n]}}(x_2 x_3^2 \cdots x_n^{n-1})
&= \partial_{n-1} \cdots \partial_1 \partial_{w_{[2,n]}}
((x_2\cdots x_n)(x_3\cdots x_n^{n-2}))\\
&=\partial_{n-1}\cdots\partial_1 (x_2 \cdots x_n \partial_{w_{[2,n]}}(x_3 \cdots x_n^{n-2}))= 1.
\end{align*}
Now we use this to show that the elements $(b_w)_{w \in S_n}$ are $\Sym_n$-linearly independent. Suppose that $\sum_{w \in S_n} p_w b_w = 0$ for some $p_w \in \Sym_n$, not all of which are zero.
Let $w \in S_n$ 
be of minimal length such that $p_w \neq 0$, and write $w_{[1,n]} = w' w$ for
$w' \in S_n$.
Then apply $\partial_{w'}$ to the identity $\sum_{w \in S_n} p_w \partial_w(x_2 x_3^2 \cdots x_n^{n-1}) = 0$ to deduce that
$p_w = 0$, a contradiction.
Finally to complete the proof we check graded dimensions:
\begin{align*}
\Dim \left(\bigoplus_{w \in S_n} \Sym_n b_w \right)&\!\stackrel{(\ref{symdim})}{=}\!
\frac{\sum_{w \in S_n}  q^{n(n-1)-2\ell(w)}}{(1-q^2)(1-q^4)\cdots (1-q^{2n})}
\!\stackrel{(\ref{poincare})}{=}\!\frac{1}{(1-q^2)^n}=\Dim \Pol_n.
\end{align*}
 \end{proof}

\begin{Corollary}\label{mx}
The endomorphism algebra $\End_{\Sym_n}(\Pol_n)$ is isomorphic to the algebra of 
$n! \times n!$ matrices with entries in $\Sym_n$.
More precisely, its center is identified with $\Sym_n$, it is free as a module over its center with a 
basis of matrix units
$(e_{x,y})_{x,y \in S_n}$ defined from
$e_{x,y}(b_w) := \delta_{y,w} b_x$,
and each $e_{x,y}$ is homogeneous of degree $2(\ell(y)-\ell(x))$.
\end{Corollary}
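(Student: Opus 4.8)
The plan is to deduce everything formally from Theorem~\ref{dem}. Since $\Pol_n$ is a free $\Sym_n$-module of rank $n!$ with the homogeneous basis $(b_w)_{w \in S_n}$, this choice of basis identifies $\End_{\Sym_n}(\Pol_n)$ with $M_{n!}(\Sym_n)$ as (ungraded) algebras: an $\Sym_n$-linear endomorphism is determined by the images of the $b_w$, and expressing each such image in the basis $(b_w)$ produces a matrix over $\Sym_n$. Under this identification the elements $e_{x,y}$ defined by $e_{x,y}(b_w) := \delta_{y,w} b_x$ are precisely the standard matrix units; in particular one checks directly on the basis that $e_{x,y} e_{z,w} = \delta_{y,z} e_{x,w}$ and $\sum_{w \in S_n} e_{w,w} = \id$, and that $(e_{x,y})_{x,y \in S_n}$ is an $\Sym_n$-basis of $\End_{\Sym_n}(\Pol_n)$.

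Next I would identify the center. Multiplication by $f \in \Sym_n$ is an $\Sym_n$-linear endomorphism of $\Pol_n$, and it corresponds under the above identification to the scalar matrix $f \cdot \id = \sum_{w} f\, e_{w,w}$; such scalar matrices are visibly central. Conversely, for any commutative ring $R$ the center of the full matrix algebra $M_{n!}(R)$ consists precisely of the scalar matrices $R \cdot \id$; applying this with $R = \Sym_n$ shows that the center of $\End_{\Sym_n}(\Pol_n)$ is exactly $\Sym_n$. Combined with the previous paragraph, the algebra is free over its center with basis the matrix units $(e_{x,y})$.

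Finally I would pin down the grading. By Theorem~\ref{dem} each $b_w$ is homogeneous of degree $n(n-1) - 2\ell(w)$. The operator $e_{x,y}$ annihilates $b_w$ for $w \neq y$ and sends $b_y$, of degree $n(n-1) - 2\ell(y)$, to $b_x$, of degree $n(n-1) - 2\ell(x)$; hence, writing an arbitrary homogeneous element of $\Pol_n$ of degree $d$ as $\sum_{w} p_w b_w$ with each $p_w \in \Sym_n$ homogeneous, its image $p_y b_x$ has degree $d + 2(\ell(y) - \ell(x))$. Thus $e_{x,y}$ is homogeneous of degree $2(\ell(y) - \ell(x))$, as claimed.

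There is essentially no genuine obstacle here: the corollary is a routine consequence of the module structure of $\Pol_n$ recorded in Theorem~\ref{dem}. The only point needing mild care is checking that the matrix-unit basis is adapted to the $\Z$-grading, and this is immediate once one has the degree formula for the $b_w$.
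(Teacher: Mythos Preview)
Your proposal is correct and is exactly the standard deduction the paper has in mind; indeed the paper states this corollary without proof, treating it as an immediate consequence of Theorem~\ref{dem}. Your write-up supplies the routine details (matrix units, center of $M_{n!}(\Sym_n)$, degree computation) in the expected way.
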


\subsection*{The nil Hecke algebra}
The nil Hecke algebra $\NH_n$ is the quiver Hecke algebra (to be defined formally in the next subsection)
for the trivial quiver $\bullet$. By definition, it is the associative graded 
$\K$-algebra
with homogeneous generators $x_1,\dots,x_n$ of degree $2$ and
$\tau_1,\dots,\tau_{n-1}$ of degree $-2$, subject to the following relations:
the $x_i$'s commute,
the $\tau_i$'s satisfy the same braid relations as in the symmetric group
plus the quadratic relations
$\tau_i^2 = 0$, and finally
$\tau_i x_j - x_{t_i(j)} \tau_i =
\delta_{i+1,j}-\delta_{i,j}.
$
As the $\tau_i$'s satisfy the braid relations, there are well-defined
elements $\tau_w \in \NH_n$ for each $w \in S_n$ such that
$\tau_w = \tau_{i_1}\cdots\tau_{i_k}$ whenever $w = t_{i_1}\cdots t_{i_k}$ is a reduced expression.
The definition of $\NH_n$ ensures that we can make the polynomial algebra $\Pol_n$ into a left $\NH_n$-module by declaring that each $x_i$ acts by left multiplication
and each $\tau_i$ acts by the Demazure operator $\partial_i$.

\begin{Theorem}\label{nil}
The nil Hecke algebra $\NH_n$ has basis
$$\{x_1^{m_1} \cdots x_n^{m_n}\tau_w\:|\:w \in S_n,m_1,\dots,m_n \geq 0\}.$$
Moreover the action of $\NH_n$ on $\Pol_n$ induces a graded algebra isomorphism
$$
\NH_n \stackrel{\sim}{\rightarrow}
\End_{\Sym_n}(\Pol_n).
$$
\end{Theorem}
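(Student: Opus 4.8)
The plan is to combine an easy spanning argument, the graded dimension count furnished by Corollary~\ref{mx}, and one genuinely substantive step: that the polynomial representation sends the candidate basis to a linearly independent set of operators. Once these are in place the theorem follows formally.

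First I would show that $S:=\{x_1^{m_1}\cdots x_n^{m_n}\tau_w : w\in S_n,\ m_1,\dots,m_n\geq 0\}$ spans $\NH_n$. The commuting relations among the $x_i$ and the relation $\tau_i x_j = x_{t_i(j)}\tau_i + (\delta_{i+1,j}-\delta_{i,j})$ let one move, in any monomial in the generators, every $x$ to the left of every $\tau$, introducing along the way only correction terms with strictly fewer $\tau$-factors; an induction on the number of $\tau$-factors, and for a fixed count an induction on the number of pairs $(\tau,x)$ occurring in the wrong order, then expresses every element of $\NH_n$ as a linear combination of elements $x^{\mathbf m}\cdot(\text{word in the }\tau_i)$. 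A reduced word $\tau_{i_1}\cdots\tau_{i_k}$ equals $\tau_w$ for $w=t_{i_1}\cdots t_{i_k}$, whereas a non-reduced word can be carried by braid moves to one containing $\tau_j\tau_j$ for some $j$ and hence equals $0$ since $\tau_j^2=0$. So $S$ spans, and in particular $\NH_n$ is bounded below and locally finite, so that $\Dim\NH_n$ makes sense.

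Next, the action of $\NH_n$ on $\Pol_n$ has each $x_i$ act by multiplication and each $\tau_i$ by $\partial_i$; since all these operators are $\Sym_n$-linear it defines a graded algebra homomorphism $\rho\colon\NH_n\to\End_{\Sym_n}(\Pol_n)$ with $\rho(x^{\mathbf m}\tau_w)\colon f\mapsto x^{\mathbf m}\partial_w(f)$. Spanning by $S$ gives the coefficientwise bound $\Dim\NH_n\leq\frac{1}{(1-q^2)^n}\sum_{w\in S_n}q^{-2\ell(w)}$, because $\tau_w$ has degree $-2\ell(w)$ and the monomials $x^{\mathbf m}$ carry $\Dim\Pol_n$; while Corollary~\ref{mx} together with (\ref{poincare}) and (\ref{symdim}) gives $\Dim\End_{\Sym_n}(\Pol_n)=\frac{([n]!)^2}{(1-q^2)(1-q^4)\cdots(1-q^{2n})}$. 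A short manipulation, using $\prod_{k=1}^{n}\frac{1-q^{2k}}{1-q^2}=\prod_{k=1}^{n}q^{k-1}[k]=q^{\frac12 n(n-1)}[n]!$, shows that these two expressions coincide. It therefore suffices to prove that the operators $\rho(x^{\mathbf m}\tau_w)$ are linearly independent: this forces $S$ to be linearly independent, hence a basis, so that $\Dim\NH_n$ attains the common value above; and then $\rho$ is a degree-preserving injection between graded vector spaces having the same (finite) dimension in each degree, hence an isomorphism. Both assertions of the theorem follow.

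The substantive step---which I expect to be the main obstacle---is this linear independence. Suppose $\sum_{\mathbf m,w} c_{\mathbf m,w}\,\rho(x^{\mathbf m}\tau_w)=0$, and evaluate this operator on $b_u=\partial_u(x_2 x_3^2\cdots x_n^{n-1})$ for each $u\in S_n$. Since $\partial_i^2=0$ and the $\partial_i$ obey the braid relations, one has $\partial_w\partial_u=\partial_{wu}$ whenever $\ell(wu)=\ell(w)+\ell(u)$ and $\partial_w\partial_u=0$ otherwise, so $\partial_w(b_u)$ equals $b_{wu}$ in the first case and $0$ in the second, and the relation becomes
\[
\sum_{\mathbf m}\ \sum_{w\,:\,\ell(wu)=\ell(w)+\ell(u)} c_{\mathbf m,w}\, x^{\mathbf m}\, b_{wu}=0\qquad\text{in }\Pol_n.
\]
Now induct on $\ell(w)$: assuming $c_{\mathbf m,v}=0$ for all $v$ with $\ell(v)<k$, fix $w$ with $\ell(w)=k$ and take $u=w^{-1}w_{[1,n]}$, so $\ell(u)=\ell(w_{[1,n]})-k$. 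Any $v$ contributing to the displayed sum satisfies $\ell(v)+\ell(u)=\ell(vu)\leq\ell(w_{[1,n]})$, so $\ell(v)\leq k$; the terms with $\ell(v)<k$ vanish by the inductive hypothesis, and a surviving term with $\ell(v)=k$ forces $\ell(vu)=\ell(w_{[1,n]})$, i.e. $vu=w_{[1,n]}$ and $v=w$. Using $b_{w_{[1,n]}}=1$ from Theorem~\ref{dem}, the identity collapses to $\sum_{\mathbf m}c_{\mathbf m,w}\,x^{\mathbf m}=0$, and as distinct monomials are linearly independent in $\Pol_n$ we get $c_{\mathbf m,w}=0$ for every $\mathbf m$, completing the induction. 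The whole argument leans on the two outputs of Theorem~\ref{dem}: that $(b_w)_{w\in S_n}$ is a $\Sym_n$-basis with $\deg b_w=n(n-1)-2\ell(w)$, and that $b_{w_{[1,n]}}=1$.
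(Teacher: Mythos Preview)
Your proof is correct and follows essentially the same route as the paper's: the spanning argument is dismissed as ``clear from the relations,'' the linear independence is obtained by taking a minimal-length $w$ in a putative relation and acting on $b_{w'}$ with $w_{[1,n]}=ww'$ (exactly your choice $u=w^{-1}w_{[1,n]}$), and surjectivity is then read off from the graded dimension count. Your write-up simply fills in the details the paper leaves implicit---the reduction of $\tau$-words to $\tau_w$ or $0$, the explicit identification of $\Dim\End_{\Sym_n}(\Pol_n)$ with $\frac{1}{(1-q^2)^n}\sum_w q^{-2\ell(w)}$, and the verification that only $v=w$ survives when evaluating at $b_u$---but there is no genuine difference in strategy.
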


\begin{proof}
It is clear from the relations that the given monomials span $\NH_n$.
To show that they are linearly independent, 
suppose there is a non-trivial linear relation $\sum_{w \in S_n} \sum_{m_1,\dots,m_n \geq 0} c_{w,m_1,\dots,m_n} x_1^{m_1}\cdots x_n^{m_n} \tau_w = 0$.
Let $w$ be of minimal length such that $c_{w,m_1,\dots,m_n} \neq 0$
for some $m_1,\dots,m_n$.
Write $w_{[1,n]} = w w'$ then act on the vector $b_{w'}$ from Theorem~\ref{dem}
to obtain the desired contradiction.
This argument shows in fact that the homomorphism $\NH_n \rightarrow \End_{\Sym_n}(\Pol_n)$ induced by the action of $\NH_n$ on $\Pol_n$
is injective. Finally it is surjective by a graded dimension calculation.
\end{proof}

The theorem shows in particular that the algebra $\Pol_n$ embeds into $\NH_n$
as the subalgebra generated by $x_1,\dots,x_n$. 
Using also Corollary~\ref{mx},
we deduce that $Z(\NH_n) = \Sym_n$, and $\NH_n$ is isomorphic to the algebra of
$n! \times n!$ matrices over its center.
Let
\begin{equation}\label{en}
e_n := x_2 x_3^2\cdots x_n^{n-1} \tau_{w_{[1,n]}}.
\end{equation}
Recalling Theorem~\ref{dem}, 
we have that
$e_n b_1 = b_1$ and $e_n b_w = 0$ for $w \neq 1$.
Hence $e_n$ corresponds under the isomorphism from Theorem~\ref{nil} to the matrix unit $e_{1,1}$ of Corollary~\ref{mx}, so it is a primitive idempotent.
It follows that
\begin{equation}
P_n := q^{\frac{1}{2}n(n-1)}\NH_n e_n
\end{equation}
is an indecomposable projective module.
It has a unique irreducible
{\em graded} quotient
which we denote by $L_n$.

\begin{Corollary}\label{ln}
The left regular module $\NH_n$ is isomorphic to 
$[n]! P_n$ as a graded module.
Hence $\Dim L_n = [n]!$.
\end{Corollary}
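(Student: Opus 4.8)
The plan is to read everything off from the matrix model of $\NH_n$. By Theorem~\ref{nil} together with Corollary~\ref{mx}, the action on $\Pol_n$ identifies $\NH_n$ with the algebra of $n!\times n!$ matrices over $\Sym_n$: it has the matrix-unit basis $(e_{x,y})_{x,y\in S_n}$ of Corollary~\ref{mx}, the centre $\Sym_n$ sits inside as the scalar matrices, and $e_{x,y}$ is homogeneous of degree $2(\ell(y)-\ell(x))$. Under this identification the idempotent $e_n$ of~\eqref{en} becomes the diagonal matrix unit $e_{1,1}$, as noted just before the statement, so $P_n=q^{\frac12 n(n-1)}\NH_n e_{1,1}$. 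I will decompose the left regular module into ``columns'', $\NH_n=\bigoplus_{y\in S_n}\NH_n e_{y,y}$, show that each column module is a graded shift of $P_n$, and sum up using the factorization~\eqref{poincare} of the Poincar\'e polynomial.

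For the column step, fix $y\in S_n$. By Corollary~\ref{mx} the module $\NH_n e_{y,y}$ has $\Sym_n$-basis $\{e_{x,y}\mid x\in S_n\}$, the element $e_{x,y}$ lying in degree $2(\ell(y)-\ell(x))$. Hence $e_{x,1}\mapsto e_{x,y}$ defines a $\Sym_n$-linear bijection $\NH_n e_{1,1}\to\NH_n e_{y,y}$ raising degrees by $\deg e_{x,y}-\deg e_{x,1}=2\ell(y)$; it is also $\NH_n$-linear, because $e_{z,w}e_{x,1}=\delta_{w,x}e_{z,1}$ and $e_{z,w}e_{x,y}=\delta_{w,x}e_{z,y}$. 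Therefore $\NH_n e_{y,y}\cong q^{2\ell(y)}\NH_n e_{1,1}\cong q^{2\ell(y)-\frac12 n(n-1)}P_n$, and summing over $y$,
\begin{align*}
\NH_n\;\cong\;\bigoplus_{y\in S_n}q^{2\ell(y)-\frac12 n(n-1)}P_n
&\;=\;\Bigl(q^{-\frac12 n(n-1)}\sum_{y\in S_n}q^{2\ell(y)}\Bigr)P_n\\
&\;\stackrel{(\ref{poincare})}{=}\;[n]!\,P_n,
\end{align*}
which is the first assertion.

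For the second assertion I would pass to the quotient by the graded Jacobson radical $\operatorname{rad}\NH_n$. Since $\Sym_n$ is connected and positively graded, its graded radical is the augmentation ideal $\mathfrak m:=\bigoplus_{k>0}(\Sym_n)_k$, so $\operatorname{rad}\NH_n$ consists of the matrices with entries in $\mathfrak m$ and $\NH_n/\!\operatorname{rad}\NH_n\cong M_{n!}(\K)$, still with matrix units in degree $2(\ell(y)-\ell(x))$; thus
\begin{align*}
\Dim\bigl(\NH_n/\!\operatorname{rad}\NH_n\bigr)
&\;=\;\sum_{x,y\in S_n}q^{2(\ell(y)-\ell(x))}\;=\;\Bigl(\sum_{y\in S_n}q^{2\ell(y)}\Bigr)\Bigl(\sum_{x\in S_n}q^{-2\ell(x)}\Bigr)\\
&\;\stackrel{(\ref{poincare})}{=}\;\bigl([n]!\bigr)^2.
\end{align*}
On the other hand $L_n=P_n/\!\operatorname{rad}P_n$ by the definition of $L_n$, and for the projective module $[n]!\,P_n$ one has $\operatorname{rad}\bigl([n]!\,P_n\bigr)=(\operatorname{rad}\NH_n)\,[n]!\,P_n=[n]!\operatorname{rad}P_n$; so reducing the isomorphism $\NH_n\cong[n]!\,P_n$ modulo radicals gives $\NH_n/\!\operatorname{rad}\NH_n\cong[n]!\,L_n$. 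Comparing graded dimensions, $[n]!\cdot\Dim L_n=\bigl([n]!\bigr)^2$, hence $\Dim L_n=[n]!$.

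The computations are short; the point to watch is the bookkeeping of the shift $q^{\frac12 n(n-1)}$ built into the definition of $P_n$, which must cancel exactly the shift $q^{-\frac12 n(n-1)}$ coming from the degrees of the matrix units (equivalently, from $\sum_w q^{-2\ell(w)}$), so that no stray power of $q$ survives in $\NH_n\cong[n]!\,P_n$ or in $\Dim L_n=[n]!$. One also uses the standard fact, easily checked in the graded setting, that $\operatorname{rad}M_k(R)=M_k(\operatorname{rad}R)$.
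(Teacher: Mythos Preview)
Your proof is correct and, for the first assertion, essentially identical to the paper's: both decompose $\NH_n=\bigoplus_{w\in S_n}\NH_n e_{w,w}$ via the matrix model of Corollary~\ref{mx}, identify each column with a degree shift of $\NH_n e_{1,1}$, and sum using~\eqref{poincare}. (The paper phrases the column isomorphism as right multiplication by $e_{w,1}$ rather than your explicit $e_{x,1}\mapsto e_{x,y}$, but these are inverse maps.)

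For the second assertion you take a slightly different route. The paper argues in one line via $\Hom$: from $\NH_n\cong[n]!\,P_n$ one gets
\[
\Dim L_n=\Dim\Hom_{\NH_n}(\NH_n,L_n)=[n]!\,\Dim\Hom_{\NH_n}(P_n,L_n)=[n]!,
\]
using only that $L_n$ is the irreducible head of $P_n$. You instead compute $\NH_n/\operatorname{rad}\NH_n$ explicitly as a graded matrix algebra and match graded dimensions with $[n]!\,L_n$. Your argument is a bit longer and invokes the identification of the graded radical (which is straightforward here since $\Sym_n$ is positively graded and connected), but it is perfectly valid and gives the same information. The $\Hom$ trick is worth remembering as it avoids computing the radical at all.
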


\begin{proof}
Using Theorem~\ref{nil}, we identify $\NH_n$ with 
$\End_{\Sym_n}(\Pol_n)$.
Then as in Corollary~\ref{mx}
we have that $\NH_n = \bigoplus_{w \in S_n} \NH_n e_{w,w}$.
Right multiplication by $e_{w,1}$ is an isomorphism of graded modules
$\NH_n e_{w,w} \cong q^{2\ell(w)} \NH_n e_{1,1}$.
Thus 
$$
\NH_n \cong \bigoplus_{w \in S_n} q^{2 \ell(w)} \NH_n e_n
\stackrel{(\ref{poincare})}{\cong} [n]! \left(q^{\frac{1}{2}n(n-1)} \NH_n e_n\right)
= [n]! P_n.
$$
Finally 
$\Dim L_n = \Dim \Hom_{\NH_n}(\NH_n, L_n)
= [n]! \Dim \Hom_{\NH_n}(P_n, L_n) = [n]!$.
\end{proof}

\begin{Corollary}
$P_n\cong q^{-\frac{1}{2}n(n-1)} \Pol_n$.
\end{Corollary}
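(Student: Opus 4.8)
The plan is to exhibit an explicit isomorphism of graded $\NH_n$-modules between $\NH_n e_n$ and a grading shift of the natural module $\Pol_n$; since $P_n = q^{\frac12 n(n-1)}\NH_n e_n$ by definition, the Corollary then follows at once. Recall from \eqref{en} and Theorem~\ref{nil} that, under the identification $\NH_n\cong\End_{\Sym_n}(\Pol_n)$, the idempotent $e_n$ is the matrix unit $e_{1,1}$ of Corollary~\ref{mx}; it is homogeneous of degree $0$, since $x_2 x_3^2\cdots x_n^{n-1}$ has degree $n(n-1)$ and $\tau_{w_{[1,n]}}$ has degree $-2\ell(w_{[1,n]})=-n(n-1)$, so $\NH_n e_n$ is a graded left ideal of $\NH_n$. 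Write $b_1=x_2 x_3^2\cdots x_n^{n-1}\in\Pol_n$ for the basis vector of Theorem~\ref{dem} attached to $w=1$; it is homogeneous of degree $n(n-1)$, and $e_n b_1 = b_1$ while $e_n b_w = 0$ for $w\neq 1$. I would then study the map
\[
\phi:\NH_n e_n\longrightarrow\Pol_n,\qquad a\longmapsto a\cdot b_1 ,
\]
which is visibly a homomorphism of graded left $\NH_n$-modules, homogeneous of degree $n(n-1)$ because $b_1$ is homogeneous of that degree.

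For surjectivity of $\phi$: under $\NH_n\cong\End_{\Sym_n}(\Pol_n)$, the $\Sym_n$-linear endomorphism of $\Pol_n$ sending $b_1\mapsto v$ and $b_w\mapsto 0$ for $w\neq 1$ realizes any prescribed $v\in\Pol_n$ as $a\cdot b_1$, and since $e_n b_1 = b_1$ this already puts $v$ in $\NH_n e_n\cdot b_1$. For injectivity: suppose $a\in\NH_n e_n$ with $a\cdot b_1=0$. Writing an arbitrary $v\in\Pol_n$ as $v=\sum_{w\in S_n}p_w b_w$ with $p_w\in\Sym_n$, we have $e_n v = p_1 b_1$; hence, using that $a=a e_n$ (as $e_n$ is idempotent) and that $p_1$ lies in the center $Z(\NH_n)=\Sym_n$,
\[
a\cdot v = a\cdot(e_n v) = a\cdot(p_1 b_1) = p_1\,(a\cdot b_1) = 0 .
\]
Thus $a$ acts as zero on $\Pol_n$, and as $\Pol_n$ is a faithful $\NH_n$-module by Theorem~\ref{nil}, we get $a=0$. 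Therefore $\phi$ is an isomorphism homogeneous of degree $n(n-1)$, so $\Pol_n\cong q^{n(n-1)}\NH_n e_n$, and consequently $P_n = q^{\frac12 n(n-1)}\NH_n e_n\cong q^{-\frac12 n(n-1)}\Pol_n$.

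I do not anticipate any genuine difficulty; conceptually the statement just records that the indecomposable projective column $\End_{\Sym_n}(\Pol_n)\,e_{1,1}$ of the matrix algebra $\End_{\Sym_n}(\Pol_n)$ coincides, up to a shift, with its defining module $\Pol_n$, and the only thing to watch is the bookkeeping of degree shifts. As a sanity check one can compare graded dimensions: combining Corollary~\ref{ln} (so that $\Dim\NH_n = [n]!\,q^{\frac12 n(n-1)}\Dim \NH_n e_n$) with \eqref{poincare} and Theorem~\ref{nil} gives $\Dim\NH_n e_n = q^{-n(n-1)}/(1-q^2)^n = q^{-n(n-1)}\Dim\Pol_n$, in agreement with $\NH_n e_n\cong q^{-n(n-1)}\Pol_n$.
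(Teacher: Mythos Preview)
Your argument is correct. It differs from the paper's, which is a one-line abstract argument: from Theorems~\ref{dem} and~\ref{nil} the defining module $\Pol_n$ of the matrix algebra $\End_{\Sym_n}(\Pol_n)\cong \NH_n$ is projective indecomposable, hence isomorphic to $P_n$ up to a degree shift, and the shift is then read off from a graded dimension comparison (exactly the sanity check you appended at the end). You instead produce the isomorphism explicitly as evaluation at $b_1$, proving surjectivity and injectivity by hand using $e_n b_1=b_1$, $e_n b_w=0$ for $w\neq 1$, and centrality of $\Sym_n$. Your route has the advantage of exhibiting the map concretely and making transparent why the shift is $n(n-1)$ (it is the degree of $b_1$); the paper's route is quicker because it leans on the standard fact that a full matrix ring over a commutative ring has a unique indecomposable projective up to shift. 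Either way, the only genuine content is the identification $\NH_n\cong\End_{\Sym_n}(\Pol_n)$ already established in Theorem~\ref{nil}.
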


\begin{proof}
It is clear from Theorems~\ref{nil} and \ref{dem} that $\Pol_n$ is a 
projective indecomposable $\NH_n$-module, so it is isomorphic to $P_n$ up to a degree shift. To determine the degree shift, compare graded dimensions.
\end{proof}

\subsection*{The quiver Hecke algebra}
Fix now a loop-free quiver with finite vertex set $I$.
For $i,j \in I$, 
let $m_{i,j}$ denote the number of directed edges $i \rightarrow j$.
The corresponding (symmetric) {\em Cartan matrix} $C = (c_{i,j})_{i,j
  \in I}$ is defined from
$c_{i,i} := 2$ and $c_{i,j} := -m_{i,j}-m_{j,i}$ for $i \neq j$.
To $C$ there is an associated (symmetric) Kac-Moody algebra $\mathfrak{g}$.
We fix a choice of root datum for 
$\mathfrak{g}$. This gives us a {\em weight lattice} $P$, which is a
finitely generated free abelian group equipped with a symmetric bilinear form $P \times P \rightarrow \Q, 
(\lambda,\mu) \mapsto \lambda\cdot\mu$, containing
{\em simple roots}
$(\alpha_i)_{i \in I}$ and {\em fundamental weights} $(\varpi_i)_{i \in I}$
such that $\alpha_i \cdot \alpha_j = c_{i,j}$ and $\alpha_i \cdot \omega_j = \delta_{i,j}$ for all $i,j \in I$.
The {\em root lattice} is
$Q := \bigoplus_{i \in I} \Z \alpha_i \subset P$.
Also let $Q^+ := \bigoplus_{i \in I} \N \alpha_i \subset Q$, and define
the {\em height} of $\alpha = \sum_{i \in I} c_i \alpha_i \in Q^+$
to be the sum $\height(\alpha) := \sum_{i \in I} c_i$ of its coefficients.
Finally let $\W$ denote the set of all words 
in the alphabet $I$ and, for $\alpha \in Q^+$ of height $n$, 
let $\W_\alpha \subset \W$
denote the words $\bi = i_1\cdots i_n \in \W$ such that $\alpha_{i_1}+\cdots+\alpha_{i_n} = \alpha$.
The symmetric group $S_n$ acts on $\W_\alpha$ by permuting letters in the obvious way.

Let $q_{i,j}(u,v) \in \K[u,v]$ denote $0$ if $i = j$ or 
$(v-u)^{m_{i,j}}(u-v)^{m_{j,i}}$ if $i \neq j$.
For $\alpha \in Q^+$ of height $n$, the {\em quiver Hecke algebra} $H_\alpha$ is the associative $\K$-algebra on generators
$
\{1_\bi\:|\:\bi \in \W_\alpha\}\cup\{x_1,\dots,x_n\}\cup\{\tau_1,\dots,\tau_{n-1}\}
$
subject to the following relations:
\begin{itemize}
\item the $1_\bi$'s are orthogonal idempotents summing to the identity $1_\alpha \in H_\alpha$;
\item $1_\bi x_k = x_k 1_\bi$ and $1_\bi \tau_k = \tau_k 1_{t_k(\bi)}$;
\item $x_1,\dots,x_n$ commute;
\item $(\tau_k x_l -x_{t_k(l)} \tau_k)1_\bi
=\delta_{i_k, i_{k+1}}(\delta_{k+1,l} - \delta_{k,l})1_\bi;
$
\item 
$\tau_k^2 1_\bi = q_{i_k, i_{k+1}}(x_k, x_{k+1}) 1_\bi$
\item $\tau_k \tau_l = \tau_l \tau_k$ if $|k-l| > 1$;
\item
$\left(\tau_{k+1} \tau_k \tau_{k+1}\!-\!\tau_k \tau_{k+1} \tau_k\right)1_\bi
= 
\delta_{i_k, i_{k+2}}
\!\!\displaystyle\frac{q_{i_k, i_{k+1}}\!(x_k, x_{k+1}) \!-\! q_{i_k,
    i_{k+1}}\!(x_{k+2},x_{k+1})}{x_k - x_{k+2}} 1_\bi.$
\end{itemize}
There is a well-defined $\Z$-grading on $H_\alpha$ such that each $1_\bi$ is of degree 0,
each $x_j$ is of degree $2$, and each $\tau_k 1_\bi$ is of degree
$-\alpha_{i_k} \cdot \alpha_{i_{k+1}}$.

Note right away that if $\alpha = n \alpha_i$ for $i
\in I$, then the quiver Hecke algebra
$H_{n \alpha_i}$ is just a copy of the nil Hecke algebra
$\NH_n$. In particular there is just one irreducible graded
left $H_{n\alpha_i}$-module
up to isomorphism and degree shift. A representative for it 
may be constructed as the irreducible head
$L(i^n)$ of the projective indecomposable module
\begin{equation}\label{pin}
P(i^n) := q^{\frac{1}{2}n(n-1)}H_{n \alpha_i} e_n
\end{equation} where $e_n \in H_{n\alpha_i}$ is the
primitive idempotent defined like in (\ref{en}).
By Corollary~\ref{ln}, $L(i^n)$ has graded dimension $[n]!$.

It is common---and convenient for calculations---to interpret $H_\alpha$
 diagrammatically. In this paragraph we explain this point of view
 under the simplifying assumption that the underlying quiver is {\em
   simply-laced}, i.e. $m_{i,j} + m_{j,i} \leq 1$ for all $i \neq j$.
Start with the free graded $\K$-linear monoidal category $H'$
generated by objects $i\:\:(i \in I)$ and homogeneous morphisms
$x:i \rightarrow i$ of degree 2 and $\tau:ij \rightarrow ji$ of degree
$-\alpha_i \cdot \alpha_j$
for all $i,j \in I$.
We represent $x$ and $\tau$ diagrammatically by:
$$
\begin{picture}(120,30)
\put(-20,9){$x=$}
\put(2.2,1){\line(0,1){20}}
\put(-0.4,9){$\bullet$}
\put(0.4,-3){$_i$}
\put(0.4,25){$_i$}
\end{picture}
\begin{picture}(20,30)
\put(-20,9){$\tau=$}
\put(2.2,1){\line(1,2){10}}
\put(12.2,1){\line(-1,2){10}}
\put(0,-3){$_i$}
\put(10,-3){$_j$}
\put(0.4,25){$_j$}
\put(10.4,25){$_i$}
\end{picture}
$$
Composition of morphisms 
corresponds to 
vertical concatenation of diagrams (so $a \circ b$ is the diagram $a$ on top of the diagram $b$),
while tensor product is horizontal concatenation (so $a \otimes b$ is $a$ to the left of $b$).
Arbitrary objects are 
tensor products $i_1 \otimes \cdots \otimes i_n$
of the generators $i_1,\dots,i_n \in I$, which we identify with
words $\bi = i_1 \cdots i_n \in \W$. Then, for two words $\bi,\bj \in \W$, an arbitrary
morphism $\bi \rightarrow \bj$ is a linear combination
of diagrams obtained by composing $x$'s and $\tau$'s horizontally and vertically
to obtain braid-like diagrams
with strings consistently colored by the letters of the 
word $\bi$ at the bottom and the word $\bj$ at the top.
In particular, $\Hom_{H'}(\bi, \bj) = \varnothing$ unless $\bi$ and $\bj$ both lie in $\W_\alpha$ for some $\alpha \in Q^+$.
Then the {\em quiver Hecke category} $H$ is the $\K$-linear monoidal category obtained from $H'$ by 
imposing the following relations:
$$
\begin{picture}(65,27)
\put(2.2,6){\line(1,2){10}}
\put(12.2,6){\line(-1,2){10}}
\put(0,2){$_i$}
\put(3.7,2){$_{\neq}$}
\put(10,2){$_j$}
\put(7.8,19.5){$\bullet$}
\put(18,14){$=$}
\put(32.2,6){\line(1,2){10}}
\put(42.2,6){\line(-1,2){10}}
\put(30,2){$_i$}
\put(33.7,2){$_{\neq}$}
\put(40,2){$_j$}
\put(32,7.9){$\bullet$}
\end{picture}
\begin{picture}(65,27)
\put(2.2,6){\line(1,2){10}}
\put(12.2,6){\line(-1,2){10}}
\put(0,2){$_i$}
\put(3.7,2){$_{\neq}$}
\put(10,2){$_j$}
\put(7.4,7.9){$\bullet$}
\put(18,14){$=$}
\put(32.2,6){\line(1,2){10}}
\put(42.2,6){\line(-1,2){10}}
\put(30,2){$_i$}
\put(33.7,2){$_{\neq}$}
\put(40,2){$_j$}
\put(31.8,19.5){$\bullet$}
\end{picture}
\begin{picture}(95,27)
\put(2.2,6){\line(1,2){10}}
\put(12.2,6){\line(-1,2){10}}
\put(0,2){$_i$}
\put(10,2){$_i$}
\put(7.8,19.5){$\bullet$}
\put(18,14){$=$}
\put(32.2,6){\line(1,2){10}}
\put(42.2,6){\line(-1,2){10}}
\put(30,2){$_i$}
\put(40,2){$_i$}
\put(32,7.9){$\bullet$}
\put(47,14){$+$}
\put(72.2,6){\line(0,1){20}}
\put(62.2,6){\line(0,1){20}}
\put(60,2){$_i$}
\put(70,2){$_i$}
\end{picture}
\begin{picture}(80,27)
\put(2.2,6){\line(1,2){10}}
\put(12.2,6){\line(-1,2){10}}
\put(0,2){$_i$}
\put(10,2){$_i$}
\put(7.4,7.9){$\bullet$}
\put(18,14){$=$}
\put(32.2,6){\line(1,2){10}}
\put(42.2,6){\line(-1,2){10}}
\put(30,2){$_i$}
\put(40,2){$_i$}
\put(31.8,19.5){$\bullet$}
\put(47,14){$+$}
\put(72.2,6){\line(0,1){20}}
\put(62.2,6){\line(0,1){20}}
\put(60,2){$_i$}
\put(70,2){$_i$}
\end{picture}
$$
$$
\:\:\begin{picture}(15,15)
\qbezier(2.2,-6)(18.2,4)(2.2,14)
\qbezier(12.2,-6)(-3.8,4)(12.2,14)
\put(0,-10){$_i$}
\put(10,-10){$_j$}
\end{picture}
=\left\{\begin{array}{cl}
0&\text{if $i=j$}\\
\begin{picture}(15,18)
\put(2.2,-6){\line(0,1){20}}
\put(12.2,-6){\line(0,1){20}}
\put(0,-10){$_i$}
\put(10,-10){$_j$}
\put(9.9,0.4){$\bullet$}
\end{picture}
-\begin{picture}(15,18)
\put(-0.1,0.4){$\bullet$}
\put(2.2,-6){\line(0,1){20}}
\put(12.2,-6){\line(0,1){20}}
\put(0,-10){$_i$}
\put(10,-10){$_j$}
\end{picture}
&\text{if $i \rightarrow j$}\\
\begin{picture}(15,30)
\put(2.2,-6){\line(0,1){20}}
\put(12.2,-6){\line(0,1){20}}
\put(0,-10){$_i$}
\put(10,-10){$_j$}
\put(-0.1,0.4){$\bullet$}
\end{picture}
-\begin{picture}(15,30)
\put(9.9,0.4){$\bullet$}
\put(2.2,-6){\line(0,1){20}}
\put(12.2,-6){\line(0,1){20}}
\put(0,-10){$_i$}
\put(10,-10){$_j$}
\end{picture}
&\text{if $i \leftarrow j$}\\
\begin{picture}(15,30)
\put(2.2,-6){\line(0,1){20}}
\put(12.2,-6){\line(0,1){20}}
\put(0,-10){$_i$}
\put(10,-10){$_j$}
\end{picture}
&\text{otherwise,}\\\\
\end{array}\right.
\qquad
\begin{picture}(24,30)
\put(2.2,-11){\line(2,3){20}}
\qbezier(12.2,-11)(26.6,4)(12.2,19)
\put(22.2,-11){\line(-2,3){20}}
\put(0,-15){$_i$}
\put(10,-15){$_j$}
\put(20,-15){$_k$}
\end{picture}
-
\begin{picture}(25,30)
\put(2.2,-11){\line(2,3){20}}
\qbezier(12.2,-11)(-2,4)(12.2,19)
\put(22.2,-11){\line(-2,3){20}}
\put(0,-15){$_i$}
\put(10,-15){$_j$}
\put(20,-15){$_k$}
\end{picture}
=
\left\{\:\:
\begin{array}{cl}
\begin{picture}(25,20)
\put(-8,0){$-$}
\put(2.2,-11){\line(0,1){30}}
\put(12.2,-11){\line(0,1){30}}
\put(22.2,-11){\line(0,1){30}}
\put(0,-15){$_i$}
\put(10,-15){$_j$}
\put(20,-15){$_k$}
\end{picture}
&\text{if $i=k \rightarrow j$}\\
\begin{picture}(25,40)
\put(2.2,-11){\line(0,1){30}}
\put(12.2,-11){\line(0,1){30}}
\put(22.2,-11){\line(0,1){30}}
\put(0,-15){$_i$}
\put(10,-15){$_j$}
\put(20,-15){$_k$}
\end{picture}
&\text{if $i=k \leftarrow j$}\\\\\\
0&\text{otherwise.}
\end{array}\right.
$$
The quiver Hecke algebra $H_\alpha = \bigoplus_{\bi,\bj \in \W_\alpha} 1_\bj H_\alpha 1_\bi$ from before is identified with the vector space
$\bigoplus_{\bi,\bj \in \W_\alpha} \Hom_H(\bi, \bj)$, so that multiplication in $H_\alpha$ corresponds to vertical composition of morphisms in $H$.

For $\alpha \in Q^+$ of height $n$ again, 
the relations imply that there is 
an antiautomorphism $\t:H_\alpha \rightarrow H_\alpha$
defined on generators by
\begin{align}
\t(1_\bi) &= 1_\bi,
&\t(x_k) &= x_k,
&\t(\tau_k) &= \tau_k.
\end{align}
In diagrammatic terms, 
$\t$ reflects in a horizontal axis.

\subsection*{Basis theorem and center}
Suppose in this subsection that $\alpha \in Q^+$ is of height $n$.
In general the braid relations are only approximately true in $H_\alpha$.
So, to write down a basis, 
we must fix a choice of a distinguished reduced expression
$w = t_{i_1}\cdots t_{i_k}$ for each $w \in S_n$, then 
{\em define} $\tau_w := \tau_{i_1}\cdots \tau_{i_k} \in H_\alpha$.
Also, for $w \in S_n$ and $\bi \in \W_\alpha$, let
\begin{equation}\label{degdef}
\deg(w;\bi) := -\displaystyle\sum_{\substack{1 \leq j < k \leq n\\ w(j) > w(k)}}
\alpha_{i_j}\cdot\alpha_{i_k}.
\end{equation}
The following theorem is proved in a similar way to 
the special case Theorem~\ref{nil}, by constructing
a certain {\em polynomial representation} of the quiver Hecke algebra
on the underlying vector space $\Pol_\alpha := \bigoplus_{\bi \in \W_\alpha}
\Pol_n 1_\bi$.

\begin{Theorem}[``Basis theorem'']\label{qhbasis}
The monomials
$$
\{x_1^{m_1}\cdots x_n^{m_n} \tau_w 1_\bi\:|\:\bi \in \W_\alpha, w \in S_n,
m_1,\dots,m_n \geq 0\}
$$
give a basis for $H_\alpha$.
Hence, for $\bi,\bj \in \W_\alpha$, we have that
\begin{equation}\label{iden}
\Dim 1_\bj H_\alpha 1_\bi = 
\frac{1}{(1-q^2)^n}\sum_{\substack{w \in S_n \\ w(\bi) = \bj}} q^{\deg(w;\bi)}.
\end{equation}
\end{Theorem}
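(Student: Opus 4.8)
\emph{Proof idea.} The plan is to follow the proof of Theorem~\ref{nil} (the case $\alpha=n\alpha_i$) essentially verbatim. There are two points: that the displayed monomials span $H_\alpha$, and that they are linearly independent; formula (\ref{iden}) is then a bookkeeping corollary. Spanning is routine. Using $1_\bi\tau_k=\tau_k 1_{t_k(\bi)}$ one slides all idempotents to the right; using the relation $(\tau_k x_l-x_{t_k(l)}\tau_k)1_\bi=\delta_{i_k,i_{k+1}}(\delta_{k+1,l}-\delta_{k,l})1_\bi$ one then slides all $x$'s to the left, at the cost of terms involving strictly fewer $\tau$'s; and given a word $\tau_{k_1}\cdots\tau_{k_m}$ for which $t_{k_1}\cdots t_{k_m}$ is not reduced, the relations $\tau_k^2 1_\bi=q_{i_k,i_{k+1}}(x_k,x_{k+1})1_\bi$, $\tau_k\tau_l=\tau_l\tau_k$ for $|k-l|>1$, and the approximate braid relation rewrite it as a $\K[x_1,\dots,x_n]$-linear combination of $\tau_w 1_\bj$'s with $\ell(w)<m$. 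An induction on the number of $\tau$'s then brings every element of $H_\alpha$ into the asserted normal form $x_1^{m_1}\cdots x_n^{m_n}\tau_w 1_\bi$.

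For linear independence I would construct the \emph{polynomial representation} of $H_\alpha$ on $\Pol_\alpha=\bigoplus_{\bi\in\W_\alpha}\Pol_n 1_\bi$: let $1_\bj$ act by projection onto its summand, let each $x_k$ act by multiplication by $x_k$ on every summand, and let $\tau_k$ send $\Pol_n 1_\bi$ into $\Pol_n 1_{t_k(\bi)}$ by the Demazure operator $f 1_\bi\mapsto\partial_k(f)1_\bi$ when $i_k=i_{k+1}$, and by a ``twisted transposition'' $f 1_\bi\mapsto\varepsilon\,(x_{k+1}-x_k)^{m_{i_k,i_{k+1}}}\,t_k(f)\,1_{t_k(\bi)}$ when $i_k\neq i_{k+1}$, where $\varepsilon=\pm1$ is fixed by comparison with a total order on $I$ so that --- together with $\partial_k(x_k)=-1$ and $q_{i,i}=0=\partial_k^2$ --- the operator $\tau_k^2$ always acts by multiplication by $q_{i_k,i_{k+1}}(x_k,x_{k+1})$. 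One then has to check that these assignments respect all the defining relations. The idempotent, $x$--$x$, $x$--$\tau$, $\tau^2$ and far-commutativity relations are immediate; the approximate braid relation requires an elementary but fiddly case analysis on the coincidence pattern of $i_k,i_{k+1},i_{k+2}$, invoking the Leibniz rule for $\partial_k$ in the degenerate cases. I expect this verification (equivalently, the eventual faithfulness of this representation) to be the main obstacle.

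Granting the polynomial representation, linear independence follows by the minimal-length argument of Theorem~\ref{nil}. Since $H_\alpha=\bigoplus_{\bi,\bj}1_\bj H_\alpha 1_\bi$ and $\tau_w 1_\bi\in 1_{w(\bi)}H_\alpha 1_\bi$, fix $\bi$ and $\bj$ and suppose $\sum c_{w,m}\,x_1^{m_1}\cdots x_n^{m_n}\tau_w 1_\bi=0$ as operators on $\Pol_\alpha$, the sum over $w$ with $w(\bi)=\bj$ and not all $c$ zero. Let $w_0$ be of minimal length with some $c_{w_0,m}\neq0$, choose $w'$ with $w_{[1,n]}=w_0 w'$ and $\ell(w_0)+\ell(w')=\ell(w_{[1,n]})$, put $\bk:=(w')^{-1}(\bi)$, and evaluate the relation on the vector $\tau_{w'}\cdot(x_2 x_3^2\cdots x_n^{n-1})1_\bk\in\Pol_n 1_\bi$ --- the quiver analogue of the element $b_{w_{[1,n]}}=1$ of Theorem~\ref{dem}. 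For $w=w_0$ the composite $\tau_{w_0}\tau_{w'}$ runs along a reduced word for $w_{[1,n]}$, and a computation like the one giving $b_{w_{[1,n]}}=1$ shows it carries this vector (or, if need be, a suitably modified test polynomial) to a nonzero element of $\Pol_n 1_\bj$; for every other $w$ appearing (necessarily of length $\geq\ell(w_0)$) the word $w w'$ is non-reduced, so by the spanning analysis $\tau_w\tau_{w'}$ collapses to $\tau$'s of length $<\ell(w_{[1,n]})$. A triangularity argument with respect to the length filtration then isolates the $w_0$-term and forces $\sum_m c_{w_0,m}\,x_1^{m_1}\cdots x_n^{m_n}=0$ in $\Pol_n$, a contradiction; the only extra care needed over the $\NH_n$ case is the bookkeeping of the (nonzero, hence harmless) $q_{i,j}$-factors contributed by the twisted transpositions. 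Finally, once the monomials form a basis, (\ref{iden}) is immediate: $x_1^{m_1}\cdots x_n^{m_n}\tau_w 1_\bi$ is homogeneous of degree $2(m_1+\cdots+m_n)+\deg(w;\bi)$ and lies in $1_{w(\bi)}H_\alpha 1_\bi$, so $\Dim 1_\bj H_\alpha 1_\bi=\bigl(\sum_{m_1,\dots,m_n\geq0}q^{2(m_1+\cdots+m_n)}\bigr)\bigl(\sum_{w:\,w(\bi)=\bj}q^{\deg(w;\bi)}\bigr)=\frac{1}{(1-q^2)^n}\sum_{w:\,w(\bi)=\bj}q^{\deg(w;\bi)}$.
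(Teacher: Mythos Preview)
Your overall strategy---spanning by a straightening argument, then linear independence via a faithful polynomial representation on $\Pol_\alpha$---is exactly the sketch the paper gives, and the spanning argument and the dimension count for (\ref{iden}) are fine. The construction of the polynomial representation is also essentially right (the precise formula for $\tau_k$ when $i_k\neq i_{k+1}$ will need adjusting, but you flag this honestly).

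The linear independence argument, however, has a real gap. Your claim that for $w\neq w_0$ the product $\tau_w\tau_{w'}1_\bk$ ``collapses to $\tau$'s of length $<\ell(w_{[1,n]})$'' is false in general: the straightening of a non-reduced word only gives $\ell(v)<\ell(w)+\ell(w')$, and since $\ell(w)$ may exceed $\ell(w_0)$ this bound need not be below $\ell(w_{[1,n]})$. Concretely, take the $\mathrm{A}_2$ quiver $1\to 2$, $\alpha=2\alpha_1+\alpha_2$, and $\bi=\bj=121$. The permutations fixing $\bi$ are $e$ and $w_{[1,3]}$, so $w_0=e$, $w'=w_{[1,3]}$. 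For $w=w_{[1,3]}$ one computes directly (using $\tau_1^2 1_{211}=(x_1-x_2)1_{211}$ and $\tau_2^2 1_{211}=0$) that $\tau_{w_{[1,3]}}\tau_{w_{[1,3]}}1_{121}=\tau_{w_{[1,3]}}1_{121}$: the longest element survives with coefficient~$1$, so evaluating on your test vector cannot separate the $e$-term from the $w_{[1,3]}$-term. The point is that in $\NH_n$ non-reduced words vanish outright because $\partial_k^2=0$, whereas here $\tau_k^2 1_\bi=q_{i_k,i_{k+1}}(x_k,x_{k+1})1_\bi$ is nonzero when $i_k\neq i_{k+1}$, and this feeds back into the top-length piece.

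The fix (and the argument actually used in \cite{KL1,R1}) is to run the triangularity the other way. Regard the action of $\tau_w 1_\bi$ on $\Pol_n 1_\bi$, after passing to rational functions, as an element of the twisted group ring $\K(x_1,\dots,x_n)\#S_n$: each $\tau_k 1_\bi$ acts as $c_k(x)\,t_k$ plus a multiple of $1$, with $c_k(x)\neq 0$, so by induction $\tau_w 1_\bi$ acts as a nonzero multiple of $t_w$ plus a $\K(x)$-combination of $t_v$ for $v<w$ in Bruhat order. Now in a putative relation $\sum_w p_w(x)\tau_w 1_\bi=0$ pick $w_0$ Bruhat-\emph{maximal} among the $w$ with $p_w\neq 0$; the coefficient of $t_{w_0}$ is then $p_{w_0}$ times a nonzero rational function, forcing $p_{w_0}=0$.
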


Note Theorem~\ref{qhbasis} shows in particular that $H_\alpha$ is locally finite dimensional and bounded below. 
The final basic result in this section is concerned with the center $Z(H_\alpha)$.
To formulate it, we pick $\bi \in \W_\alpha$ so that $S_\bi := \operatorname{Stab}_{S_n}(\bi)$ is a
standard parabolic subgroup of $S_n$, i.e. all equal letters in the word $\bi$ appear consecutively.
For $j=1,\dots,n$, let
\begin{equation}\label{centraldef}
z_j := \sum_{w \in S_n / S_\bi} x_{w(j)} 1_{w(\bi)},
\end{equation}
where $S_n / S_\bi$ denotes the set of minimal length left coset
representatives.
In view of Theorem~\ref{qhbasis}, these elements generate a free polynomial algebra
$\K[z_1,\dots,z_n]$ inside $H_\alpha$.
We let $S_\bi \leq S_n$ act on $\K[z_1,\dots,z_n]$ by
permuting the generators.

\begin{Theorem}[``Center'']\label{center}
We have that
$$
Z(H_\alpha)=\K[z_1,\dots,z_n]^{S_\bi}.
$$
Hence $H_\alpha$ is free of
finite rank as a module over its center; forgetting the grading the
rank is $(n!)^2$.
\end{Theorem}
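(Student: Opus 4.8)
The plan is to prove the two assertions of Theorem~\ref{center} in sequence: first identify $Z(H_\alpha)$ with $\K[z_1,\dots,z_n]^{S_\bi}$, then deduce the freeness and rank statements from the Basis Theorem together with the nil Hecke case already established.

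First I would check that each $z_j$ is genuinely central. The elements $1_{w(\bi)}$ for $w \in S_n/S_\bi$ are orthogonal idempotents, and multiplying by a given generator $1_\bj$ (for $\bj \in \W_\alpha$) picks out at most one term $x_{w(j)} 1_\bj$ from the sum, so commutativity of $z_j$ with all $1_\bj$ is automatic and commutativity with $x_k$ follows since the $x$'s commute. The only real point is commutativity with the $\tau_k$'s: using the relation $1_\bj \tau_k = \tau_k 1_{t_k(\bj)}$ together with the relation $(\tau_k x_l - x_{t_k(l)}\tau_k)1_\bj = \delta_{i_k,i_{k+1}}(\dots)1_\bj$, one sees that the correction terms cancel precisely because, when $i_k = i_{k+1}$ in $\bj$, the transposition $t_k$ fixes $\bj$ and hence does not appear among the coset representatives $S_n/S_\bi$ in a way that would contribute an uncancelled defect; when $i_k \neq i_{k+1}$ there is no correction term at all. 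Thus $\K[z_1,\dots,z_n]^{S_\bi} \subseteq Z(H_\alpha)$, and indeed all of $\K[z_1,\dots,z_n]$ need not be central, only its $S_\bi$-invariants, because an element symmetric under $S_\bi$ is unchanged under the transpositions $t_k$ with $i_k = i_{k+1}$.

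For the reverse inclusion $Z(H_\alpha) \subseteq \K[z_1,\dots,z_n]^{S_\bi}$ I would argue as follows. By the Basis Theorem, $H_\alpha = \bigoplus_{\bi' \in \W_\alpha} H_\alpha 1_{\bi'}$, and an element $z \in Z(H_\alpha)$ is determined by the components $z 1_{\bi'}$. Localize attention on the ``diagonal'' block: since $z$ commutes with $1_\bi$, we get $1_\bi z 1_\bi = z 1_\bi$, so writing $z 1_\bi$ in the basis $\{x_1^{m_1}\cdots x_n^{m_n}\tau_w 1_\bi\}$ and using that $z$ also commutes with $x_1 1_\bi, \dots, x_n 1_\bi$ (equivalently, with the polynomial subalgebra $\Pol_n 1_\bi$) forces $z 1_\bi$ to be a polynomial in the $x$'s with no $\tau_w$ for $w \neq 1$ — this is exactly the argument from Theorem~\ref{nil} that the centralizer of $\Pol_n$ inside $\NH_n$ is $\Sym_n$, applied blockwise. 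So $z 1_\bi = p(x_1,\dots,x_n) 1_\bi$ for some polynomial $p$, which must moreover be $S_\bi$-invariant (again by the $\NH$ computation, since equal-letter blocks of $\bi$ give nil Hecke subalgebras whose center is the corresponding symmetric polynomials). Finally, for any $w \in S_n/S_\bi$, centrality against $\tau_w 1_\bi$ (which intertwines the $\bi$-block with the $w(\bi)$-block) propagates $z 1_\bi$ to $z 1_{w(\bi)} = (w\cdot p)(x) 1_{w(\bi)}$, so $z = \sum_{w \in S_n/S_\bi} (w\cdot p)(x_1,\dots,x_n)1_{w(\bi)}$, which is exactly $p(z_1,\dots,z_n)$ under the identification \eqref{centraldef}. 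The main obstacle is getting this propagation step completely right: one must verify that conjugating the diagonal polynomial by $\tau_w$ really does produce $(w\cdot p)(x)1_{w(\bi)}$ with no lower-order error terms, which uses the defining relation for $\tau_k x_l$ and an induction on $\ell(w)$, and one must handle the fact that distinct coset representatives $w$ can have $w(\bi)$ coincide (when $\bi$ has a nontrivial stabilizer) — this is precisely why $S_\bi$-invariance of $p$ is needed for consistency.

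For the freeness and rank statements, I would use that $Z := Z(H_\alpha) = \K[z_1,\dots,z_n]^{S_\bi}$ is a polynomial ring (a product of symmetric-polynomial rings, one for each equal-letter block of $\bi$) over which $\K[z_1,\dots,z_n]$ is free of rank $|S_\bi|$. By the Basis Theorem, $H_\alpha$ is free as a left module over $\K[x_1,\dots,x_n]1_\bi \oplus \cdots$ — more precisely, counting the basis monomials, $H_\alpha$ is free over $\K[z_1,\dots,z_n]$ of rank $|S_n| \cdot [S_n:S_\bi] = n!\,[S_n:S_\bi]$ (there are $[S_n:S_\bi]$ choices of block $1_{\bi'}$ reachable, $n!$ choices of $\tau_w$, and the $x$-monomials supply a basis of $\Pol_n$ over the image of $\K[z_1,\dots,z_n]$ in each block). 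Composing with the rank-$|S_\bi|$ freeness of $\K[z_1,\dots,z_n]$ over $Z$ gives that $H_\alpha$ is free over $Z$ of rank $n!\,[S_n:S_\bi]\cdot|S_\bi| = n!\cdot n! = (n!)^2$, as claimed. Here the bookkeeping should just be a careful graded-dimension check against \eqref{iden}; the only subtlety worth stating is that the ranks of the various free-module layers are independent of the choice of $\bi$ within its $S_n$-orbit because $[S_n:S_\bi]\cdot|S_\bi| = n!$ always.
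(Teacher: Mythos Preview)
The paper does not actually prove the identification $Z(H_\alpha)=\K[z_1,\dots,z_n]^{S_\bi}$; it states the result and defers to \cite{R1,KL1} in the end-of-section notes. What the paper does provide is the worked example $\alpha=2\alpha_1+\alpha_2$, $\bi=112$, illustrating how freeness and the rank $(n!)^2$ follow from the center identification via the tower
\[
Z(H_\alpha)\ \subset\ \K[z_1,\dots,z_n]\ \subset\ \Pol_\alpha\ \subset\ H_\alpha
\]
of free extensions of ranks $|S_\bi|$, $[S_n:S_\bi]$, $n!$. Your final paragraph follows exactly this route, so on the second assertion you agree with the paper.

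Your first paragraph, however, contains a real error. You open with ``First I would check that each $z_j$ is genuinely central'' --- but the individual $z_j$ are \emph{not} central. In the paper's own example, $z_1=x_11_{112}+x_11_{121}+x_21_{211}$ fails to commute with $\tau_11_{112}$: the letters in positions $1,2$ of $112$ coincide, so this is a nil-Hecke block in which $\tau_1x_11_{112}=(x_2\tau_1-1)1_{112}\neq x_1\tau_11_{112}$. You seem to half-recognize this by the end of the paragraph (``all of $\K[z_1,\dots,z_n]$ need not be central, only its $S_\bi$-invariants''), which makes the paragraph internally inconsistent. The correct opening move is to check directly that an $S_\bi$-\emph{symmetric} polynomial $p(z_1,\dots,z_n)$ commutes with each $\tau_k1_\bj$: when $j_k\neq j_{k+1}$ there is no correction term and the polynomial simply gets permuted; when $j_k=j_{k+1}$ the nil-Hecke correction terms appear, and it is precisely the $S_\bi$-symmetry of $p$ that makes them cancel. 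Your sketch of the reverse inclusion (diagonal-block reduction via centralizing $\Pol_n$, then $S_\bi$-invariance from the nil-Hecke subalgebra, then propagation to the other blocks via $\tau_w$ for minimal coset representatives $w$) is sound and is indeed the standard argument, though as noted the paper itself does not carry it out.
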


To illustrate how the second statement of the theorem is deduced from
the first, consider the special case $I = \{1,2\}$ and $\alpha = 2 \alpha_1 + \alpha_2$,
and take $\bi = 112$. Then
\begin{align*}
z_1 &= x_1 1_{112} + x_1 1_{121} + x_2 1_{211},\\
z_2 &= x_2 1_{112} + x_3 1_{121}+x_3 1_{211},\\
z_3 &= x_3 1_{112} + x_2 1_{121} + x_1 1_{211}.
\end{align*}
By the first part of the theorem, $Z(H_\alpha)$ is freely generated by the elements
$z_1+z_2$, $z_1z_2$ and $z_3$.
The algebra $\K[z_1,z_2,z_3]$ is free with basis $\{1, z_1\}$ as a $Z(H_\alpha)$-module;
the algebra $\Pol_\alpha$ embedded in the natural way into $H_\alpha$
is free as a $\K[z_1,z_2,z_3]$-module 
with basis $\{1_{112}, 1_{121}, 1_{211}\}$; and finally $H_\alpha$ is a free
left $\Pol_\alpha$-module 
on basis $\{\tau_w\:|\:w \in S_3\}$. 
Putting it all together we see that $H_\alpha$ is a free $Z(H_\alpha)$-module
of rank 36.

\subsection*{Notes}
Our discussion of nil Hecke algebras follows \cite[$\S$2]{R2}.

Quiver Hecke algebras were introduced 
by Khovanov and Lauda \cite{KL1,KL2} and independently 
by Rouquier \cite{R1} (in a slightly more general form); 
consequently they are also often called {\em Khovanov-Lauda-Rouquier algebras}. 
We have restricted to symmetric Cartan matrices for simplicity, but note that
all of the definitions and results described in this article can be extended to arbitrary symmetrizable Cartan matrices,
with the notable exception of Theorem~\ref{canb}.
The precise normalization of the relations chosen here matches that of \cite[$\S$5]{R2} and \cite{VV}, where 
quiver Hecke algebras for symmetric Cartan matrices
are realized geometrically as ext-algebras of a certain direct sum of degree-shifted irreducible perverse sheaves on a quiver variety.

Theorems~\ref{qhbasis} and \ref{center} are proved both in 
\cite[Theorem 3.7 and Proposition 3.9]{R1} and in \cite[Theorem 2.5 and Theorem 2.9]{KL1}.

\section{Categorification}

Now we describe the main results relating the representation theory of
the quiver Hecke algebras $H_\alpha$ (or the quiver Hecke category
$H$) to Lusztig's algebra $\f$,
i.e. half of the quantized enveloping algebra associated
to the Kac-Moody algebra $\mathfrak{g}$.

\subsection*{Rep and Proj}
By an {\em $H$-module} we henceforth mean a $Q^+$-graded vector
space $V = \bigoplus_{\alpha \in Q^+} 1_\alpha V$ such that 
each $1_\alpha V$ is a {\em graded} left $H_\alpha$-module.
Occasionally we talk about {\em ungraded} modules, meaning each
$1_\alpha V$ is a 
left $H_\alpha$-module without any prescribed grading.
In view of Theorem~\ref{qhbasis}, each $H_\alpha$ is locally finite
dimensional and bounded below, hence irreducible $H$-modules
are automatically finite dimensional.
Also Theorem~\ref{center} implies that 
there are only finitely many
irreducible graded $H_\alpha$-modules for
each $\alpha\in Q^+$
up to isomorphism and degree shift.

\begin{Lemma}\label{abs}
Every irreducible $H$-module remains irreducible as an ungraded module.
If $L_1$ and $L_2$ are two irreducible $H$-modules which are
isomorphic as ungraded modules then there exists a unique $m \in \Z$
such that 
$L_1 \cong q^m L_2$ as graded modules.
\end{Lemma}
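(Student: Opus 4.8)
The plan is to reduce both assertions to the graded representation theory of a single algebra $H_\alpha$. By the definition of an $H$-module, the summand $1_\alpha V$ (with zero in all other $Q^+$-degrees) is always an $H$-submodule of $V$, so an irreducible $H$-module is supported on a single $\alpha\in Q^+$; hence it is an irreducible graded $H_\alpha$-module, and by the observation recorded right after Theorem~\ref{qhbasis} it is finite-dimensional. So I would fix $A:=H_\alpha$ and prove: a finite-dimensional irreducible graded $A$-module $L$ is irreducible as an ungraded module, and two such modules are ungraded-isomorphic only if they differ by a unique degree shift. I would use throughout that $\K$ is algebraically closed, hence infinite.

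First I would record two soft facts. (i) $\End_A(L)$, computed ignoring the grading, equals $\K$: since $A$ acts by homogeneous operators, every $\K$-linear endomorphism of the finite-dimensional graded module $L$ is a finite sum of its homogeneous components, each of which is again $A$-linear; a nonzero homogeneous $A$-endomorphism $f$ of degree $m$ has graded kernel and image, so by graded irreducibility it is bijective, forcing $\dim L_k=\dim L_{k+m}$ for all $k$, which is impossible for $m\ne 0$ since $\Dim L$ is a nonzero Laurent polynomial. Thus $\End_A(L)$ coincides with its degree-$0$ part, which is a finite-dimensional division algebra over $\K$ by graded Schur, hence $\cong\K$. (ii) The same graded-dimension computation shows that $q^d L\cong q^{d'}L$ as graded modules forces $d=d'$; this will give the uniqueness of the shift.

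For the second statement, given an ungraded isomorphism $f\colon L_1\to L_2$ — which forces $L_1$ and $L_2$ to be supported on the same $\alpha$ — I would decompose $f=\sum_m f_m$ into homogeneous components, each of which is again $A$-linear. By graded Schur a nonzero $f_m$ is a graded isomorphism $L_1\stackrel{\sim}{\rightarrow}q^{-m}L_2$, in particular bijective; so if $f_m$ and $f_{m'}$ were both nonzero with $m\ne m'$, then $f_{m'}^{-1}\circ f_m$ would be a nonzero homogeneous $A$-endomorphism of $L_1$ of degree $m-m'\ne 0$, contradicting (i). Hence exactly one $f_m$ is nonzero, so $f$ is itself homogeneous and $L_1\cong q^{-m}L_2$; uniqueness of the shift is (ii).

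The real work is the first statement, and here finite-dimensionality is genuinely needed — an infinite-dimensional graded-simple module such as $\K[t,t^{-1}]$ over itself need not be ungraded-simple. The plan is to show that $\bar L$ (the module $L$ with its grading forgotten) is semisimple, and then to conclude that $\bar L$ is simple using $\End_A(\bar L)=\K$ from (i). For semisimplicity: since $\K$ is infinite, each $t\in\K^{\times}$ determines a graded algebra automorphism $\gamma_t$ of $A$ (scaling the degree-$k$ component by $t^k$) together with a compatible bijection $\gamma_t^L$ of $L$ (scaling $L_k$ by $t^k$), satisfying $\gamma_t^L(av)=\gamma_t(a)\gamma_t^L(v)$; consequently $\gamma_t^L$ sends $A$-submodules of $\bar L$ to $A$-submodules and simple ones to simple ones (the image being $A$-isomorphic to a twist of the original), so it preserves the socle of $\bar L$. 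But a $\gamma_t^L$-invariant subspace of the finite-dimensional $L$ must be graded, by a Vandermonde argument over the infinite field $\K$; hence the socle is a nonzero graded submodule, so it is all of $L$ by graded irreducibility. Passing from graded simplicity to ungraded semisimplicity via this $\gamma_t$-action is the step I expect to be the main obstacle; everything else is formal bookkeeping with graded dimensions and Schur's lemma.
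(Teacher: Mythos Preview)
Your argument is correct. The paper's own ``proof'' consists entirely of two citations: the first assertion is attributed to \cite[Theorem 4.4.4(v)]{NV} (a general result on graded rings from N\u{a}st\u{a}sescu--Van~Oystaeyen) and the second to \cite[Lemma 2.5.3]{BGS} (Beilinson--Ginzburg--Soergel). So there is nothing concrete in the paper to compare against at the level of ideas.

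What you have done instead is supply a self-contained proof tailored to the situation at hand. For the second assertion, your decomposition of an ungraded isomorphism into homogeneous components and appeal to graded Schur is essentially the standard argument (and is indeed the content of the BGS lemma). For the first assertion, your use of the $\K^\times$-action via the grading automorphisms $\gamma_t$, together with the Vandermonde argument forcing the socle to be graded, is the classical trick underlying the general result cited from \cite{NV}; your observation that $\End_A(\bar L)=\K$ then pins $\bar L$ down as a single simple summand. The advantage of your route is that it makes transparent exactly which hypotheses are being used (finite-dimensionality of $L$, and that $\K$ is infinite for the Vandermonde step), whereas the paper simply imports results proved in much greater generality. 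One small stylistic point: in your paragraph~(i) the phrase ``every $\K$-linear endomorphism \dots\ is a finite sum of its homogeneous components, each of which is again $A$-linear'' should really read ``every $A$-linear endomorphism''; the homogeneous components of an arbitrary $\K$-linear map need not be $A$-linear, but this is clearly what you intend and the argument goes through.
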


\begin{proof}
The first part is \cite[Theorem 4.4.4(v)]{NV}, and the second part follows from \cite[Lemma 2.5.3]{BGS}.
\end{proof}

We have that
$1_\alpha = \bigoplus_{\bi \in \W_\alpha} 1_\bi$ as a sum of mutually
orthogonal idempotents, hence any $H$-module $V$ decomposes as a vector
space as
$V = \bigoplus_{\bi \in \W} 1_\bi V.$
We refer to the subspace $1_\bi V$ as the {\em $\bi$-word space} of $V$.
The {\em character} of a finite dimensional $H$-module $V$ is
defined from
\begin{equation}\label{char}
\Ch V := \sum_{\bi \in \W} (\Dim 1_\bi V) \bi,
\end{equation}
which is an element of the free $\Z[q,q^{-1}]$-module $\Z[q,q^{-1}]\W$
on basis $\W$. 
Forgetting the grading, we also have the {\em ungraded character} $\ch V := \sum_{\bi \in \W} (\dim 1_\bi V)
\bi$, which is the specialization of $\Ch
V$ at $q=1$.

\begin{Lemma}\label{parity}
Fix a total order $<$ on $I$. For a word $\bi \in \W$ of length $n$ define
$$
p(\bi) := \sum_{\substack{1 \leq j < k \leq n\\i_j < i_k}} \alpha_{i_j}\cdot\alpha_{i_k} \pmod{2}.
$$
Then every $H$-module $V$ decomposes as a direct sum of 
modules
 as $V = V^{\bar 0}\oplus V^{\bar 1}$ where $$
V^{q} := \bigoplus_{\substack{\bi \in \W, n \in \Z \\n \equiv p(\bi)+q\: (\operatorname{mod} 2)}}
1_\bi V_n.
$$
\end{Lemma}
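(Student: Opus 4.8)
The plan is to show directly that each of $V^{\bar 0}$ and $V^{\bar 1}$ is an $H$-submodule of $V$. Since $V = \bigoplus_{\bi \in \W,\, n \in \Z} 1_\bi V_n$ and each summand $1_\bi V_n$ lies in exactly one of $V^{\bar 0}$, $V^{\bar 1}$ according to the parity of $n - p(\bi)$, this is the same as producing the asserted decomposition $V = V^{\bar 0} \oplus V^{\bar 1}$ as $H$-modules. Concretely, it suffices to check that each of the generators $1_\bi$, $x_k$, $\tau_k$ of $H$ acts on $V$ by an operator preserving the $\Z/2$-grading in which $1_\bi V_n$ sits in degree $n - p(\bi) \pmod 2$. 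For $1_\bi$ this is immediate, and $x_k 1_\bi$ is homogeneous of degree $2$ and maps $1_\bi V$ into itself, so it sends $1_\bi V_n$ to $1_\bi V_{n+2}$, leaving $n - p(\bi)$ unchanged modulo $2$.

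The only generator requiring an argument is $\tau_k$. From the quiver Hecke relations one has $\tau_k 1_\bi = 1_{t_k(\bi)} \tau_k$, so $\tau_k$ maps $1_\bi V$ into $1_{t_k(\bi)} V$; and since $\tau_k 1_\bi$ is homogeneous of degree $-\alpha_{i_k}\cdot\alpha_{i_{k+1}}$, it sends $1_\bi V_n$ into $1_{t_k(\bi)} V_{n - \alpha_{i_k}\cdot\alpha_{i_{k+1}}}$. Thus the claim for $\tau_k$ is precisely the congruence
$$
p(\bi) - p(t_k(\bi)) \equiv \alpha_{i_k}\cdot\alpha_{i_{k+1}} \pmod 2 .
$$
To establish this I would compare the defining sums for $p(\bi)$ and $p(t_k(\bi))$ term by term, running over pairs of positions $j < l$ in the word. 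Pairs disjoint from $\{k, k+1\}$ contribute identically; and, using the symmetry of the bilinear form, for each fixed $j < k$ the combined contribution of the pair $(j,k)$ together with $(j,k+1)$ is the same in both sums (the two terms are merely transposed when $\bi$ is replaced by $t_k(\bi)$), and likewise for each fixed $l > k+1$ with the pair $(k,l)$ together with $(k+1,l)$. So the entire difference comes from the single pair $(k,k+1)$, giving
$$
p(\bi) - p(t_k(\bi)) \equiv \bigl([\,i_k < i_{k+1}\,] - [\,i_{k+1} < i_k\,]\bigr)\,\alpha_{i_k}\cdot\alpha_{i_{k+1}} \pmod 2 .
$$

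It remains to dispose of the two resulting cases. If $i_k \neq i_{k+1}$, then exactly one of $i_k < i_{k+1}$, $i_{k+1} < i_k$ holds, so the bracketed coefficient is $\pm 1$ and the right-hand side is congruent to $\alpha_{i_k}\cdot\alpha_{i_{k+1}}$ modulo $2$, as needed. If $i_k = i_{k+1}$, then $t_k(\bi) = \bi$, so the left-hand side vanishes, and so does the right-hand side modulo $2$, because $\alpha_{i_k}\cdot\alpha_{i_{k+1}} = c_{i_k, i_k} = 2$. Either way the required congruence holds, completing the proof. The one genuinely delicate point is this bookkeeping of cancellations in the two displayed sums, combined with the observation that evenness of the diagonal Cartan entry $c_{i,i} = 2$ is exactly what makes the degenerate case $i_k = i_{k+1}$ come out correctly.
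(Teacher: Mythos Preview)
Your proof is correct and follows exactly the same approach as the paper: check that $V^{\bar 0}$ and $V^{\bar 1}$ are stable under the generators, with the only nontrivial case being $\tau_k$, which reduces to the congruence $p(t_k(\bi)) \equiv p(\bi) - \alpha_{i_k}\cdot\alpha_{i_{k+1}} \pmod 2$. The paper simply asserts this congruence, whereas you have spelled out the pair-by-pair cancellation and the role of $c_{i,i}=2$ in the diagonal case.
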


\begin{proof}
We just need to check for $\alpha \in Q^+$ 
that $1_\alpha V^{\bar 0}$ and $1_\alpha V^{\bar 1}$ are stable
under the action of the generators of $H_\alpha$.
This is clear for the generators $1_\bi$ and $x_j$ since these are
even and preserve the word spaces. For $\tau_k$, note that it maps
$1_\bi V_n$ into $1_{t_k(\bi)} V_{n-\alpha_{i_k}\cdot \alpha_{i_{k+1}}}$, and we have
that
$p(t_k(\bi)) \equiv p(\bi) -\alpha_{i_k}\cdot\alpha_{i_{k+1}} \pmod{2}$.
\end{proof}

We are going to focus now on the categories
$$
\Rep(H) = \bigoplus_{\alpha \in Q^+} \Rep(H_\alpha),
\qquad
\Proj(H) = \bigoplus_{\alpha \in Q^+} \Proj(H_\alpha)
$$
of finite dimensional $H$-modules and finitely generated projective
$H$-modules, respectively.
Morphisms in both categories are module homomorphisms that are
homogeneous of degree 0. In particular, this ensures that $\Rep(H)$ is
an abelian category. 
We continue to write $\Hom_H(V, W)$ for $\bigoplus_{n \in \Z} \Hom_H(V, W)_n$,
where $\Hom_H(V,W)_n$ means homomorphisms that are homogeneous of degree $n$.
In other words, we are viewing $\Rep(H)$ and $\Proj(H)$ as 
{\em graded categories} equipped with the distinguished
{\em degree shift automorphism} $V \mapsto qV$, and $\Hom_H(V, W)$ is the graded vector space
$\bigoplus_{n \in \Z} \operatorname{hom}(q^n V, W)$ defined via the
internal hom.

The following theorem is fundamental. Its proof is essentially the
same as the proof of the analogous statement for the affine Hecke
algebra $H_n$ from the introduction, which is classical. The argument
uses the Shuffle Lemma (formulated as Corollary~\ref{shuffle} below)
and the properties of the nil Hecke algebra proved already.

\begin{Theorem}[``Linear independence of characters'']\label{chinj}
For $\alpha \in Q^+$,
suppose that $L_1,\dots,L_r$ are representatives for the irreducible
graded left $H_\alpha$-modules up to isomorphism and degree shift.
Their ungraded characters $\ch L_1,\dots, \ch L_r \in \Z \W$ are $\Z$-linearly
independent.
\end{Theorem}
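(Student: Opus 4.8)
The plan is to reduce linear independence of the ungraded characters to the simplest possible test: evaluating characters against specially chosen projective modules and using the known structure of the nil Hecke algebra. More precisely, I would show that for each word $\bi \in \W_\alpha$ there is a projective $H_\alpha$-module $P(\bi)$ whose character ``detects'' $\bi$, in the sense that the pairing between $\ch$ of a finite-dimensional module and these projectives is (up to an invertible triangular change) just reading off word-space dimensions. Since the word spaces of a module can be recovered from its character by definition (the character is literally the generating function of word-space dimensions), the real content is to organize the $L_i$ so that they are distinguished by these pairings.

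First I would introduce, for a word $\bi = i_1 \cdots i_n$, the projective module $P(\bi)$ obtained by inducing: if $\bi$ groups as consecutive blocks $i^{a_1} j^{a_2} \cdots$, set $P(\bi)$ to be the induction product $P(i^{a_1}) \circ P(j^{a_2}) \circ \cdots$ of the nil-Hecke projectives from \eqref{pin}, and for a general word apply a sequence of $\tau$'s to move to a grouped word (this is where the Shuffle Lemma, Corollary~\ref{shuffle}, enters — it computes the character of such induced modules as a shuffle of the characters of the factors). The key point is that $\Dim \Hom_H(P(\bi), V)$ equals $\Dim 1_{\bi} V$ up to the graded dimension factor $[a_1]! [a_2]! \cdots$ contributed by the nil Hecke parts (using Corollary~\ref{ln}); setting $q = 1$, $\dim \Hom_H(P(\bi), V) = (a_1! a_2! \cdots)\, \dim 1_{\bi} V$. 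Hence the ungraded character $\ch V$ is determined by, and determines, the tuple $(\dim \Hom_H(P(\bi), V))_{\bi \in \W_\alpha}$.

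Now suppose $\sum_{i=1}^r c_i \ch L_i = 0$ with $c_i \in \Z$ not all zero. Applying the functor $\Hom_H(P(\bi), -)$ (exact on the relevant modules since $P(\bi)$ is projective) and taking ungraded dimensions, we get $\sum_i c_i \dim \Hom_H(P(\bi), L_i) = 0$ for every $\bi$. To reach a contradiction I would exhibit, for each $i$, a word $\bi(i)$ — a ``highest word'' of $L_i$, e.g. the lexicographically largest $\bi$ with $1_{\bi} L_i \neq 0$ — such that $\Hom_H(P(\bi(i)), L_j) = 0$ unless $\bi(i)$ also occurs in $L_j$, and then argue that after reordering the $L_i$ by their highest words the matrix $\big(\dim \Hom_H(P(\bi(i)), L_j)\big)_{i,j}$ is (block) triangular with nonzero diagonal, hence invertible over $\Q$. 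This forces all $c_i = 0$.

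The main obstacle is establishing the triangularity, i.e. controlling which irreducibles $L_j$ can have a prescribed highest word $\bi(i)$ in their word-space support. This is exactly the standard Kleshchev–Ram / Lauda–Vazirani style argument: one needs that if $\bi$ is the highest word of $L_i$ then $\bi$ appears in $L_j$ only for $j$ with highest word $\geq \bi$, which in turn rests on the fact that the head of $P(\bi)$, restricted to its $\bi$-word space, is one-dimensional (from the nil Hecke computation, $L(i^a)$ has a one-dimensional $i^a$-word space) together with the shuffle description of characters of induced modules. Once that combinatorial input is in place — and it follows formally from Corollary~\ref{shuffle}, Corollary~\ref{ln}, and the classification of nil Hecke irreducibles recalled after \eqref{pin} — the linear-independence statement is immediate from invertibility of a triangular integer-entried matrix. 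The argument is genuinely the same as the classical one for affine Hecke algebras, with the nil Hecke algebra playing the role that the local rank-one computation plays there.
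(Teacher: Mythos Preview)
Your overall strategy—reduce to a triangular system via highest words, using the Shuffle Lemma and the nil Hecke structure—is indeed the classical argument the paper has in mind. But the justification you offer for the crucial step is wrong, and what remains unproved is essentially the content of the theorem. You assert that ``$L(i^a)$ has a one-dimensional $i^a$-word space''; this is false, since by Corollary~\ref{ln} we have $\dim L(i^a) = a!$ and $i^a$ is its only word. More seriously, nothing you have written rules out two non-isomorphic irreducibles $L_i, L_j$ sharing the same highest word. If that happens, your matrix acquires a genuine diagonal block whose invertibility is \emph{equivalent} to the linear independence of the corresponding characters, so the argument is circular at exactly the point that matters.

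What the nil Hecke structure actually supplies is a crystal-operator lemma. For irreducible $L$ set $\epsilon_i(L) := \max\{m : \Res^\alpha_{\alpha-m\alpha_i,\,m\alpha_i} L \neq 0\}$. Using that $H_{m\alpha_i}$ is a matrix ring over its center (Theorem~\ref{nil}), one shows for $m = \epsilon_i(L)$ that $\Res^\alpha_{\alpha-m\alpha_i,\,m\alpha_i} L \cong L' \boxtimes L(i^m)$ with $L'$ irreducible, and that $L \mapsto L'$ is injective on isomorphism classes with $\epsilon_i = m$ (its inverse takes $L'$ to the simple head of $L' \circ L(i^m)$). Granting this, the proof is a clean induction on $\height(\alpha)$: in a relation $\sum_k c_k \ch L_k = 0$, fix $i$, let $m$ be the maximal $\epsilon_i(L_k)$, and read off the coefficients of words ending in exactly $i^m$ to get $m!\sum_{\epsilon_i(L_k)=m} c_k \ch L_k' = 0$; induction plus injectivity kills those $c_k$, and one iterates. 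Your highest-word formulation becomes correct once this injectivity is established—it is precisely what forces distinct irreducibles to have distinct highest words—but not before.
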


Let $[\Rep(H)]$ denote the Grothendieck group of the abelian category
$\Rep(H)$ and $[\Proj(H)]$ be the split Grothendieck group of the
additive category $\Proj(H)$. 
The isomorphism classes of
irreducible $H$-modules give a basis for $[\Rep(H)]$,
and the isomorphism classes of
projective indecomposable $H$-modules
give a basis for $[\Proj(H)]$, both as free $\Z$-modules.
We often view $[\Rep(H)]$ and $[\Proj(H)]$
as
$\Z[q,q^{-1}]$-modules, with action of $q$ induced by the degree shift
automorphism.
 The character map from (\ref{char}) induces a
$\Z[q,q^{-1}]$-linear map
$$
\Ch:[\Rep(H)] \rightarrow \Z[q,q^{-1}] \W.
$$

\begin{Corollary}\label{chinjc}
The map $\Ch:[\Rep(H)] \rightarrow \Z[q,q^{-1}]\W$ is injective.
\end{Corollary}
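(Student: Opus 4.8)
The plan is to deduce injectivity of $\Ch$ on the Grothendieck group from the linear independence of ungraded characters established in Theorem~\ref{chinj}, by combining the grading information recorded in $\Ch$ with the rigidity statement of Lemma~\ref{abs}. The key observation is that $[\Rep(H)]$ is a free $\Z[q,q^{-1}]$-module: fixing for each $\alpha \in Q^+$ a complete set $L_1^\alpha,\dots,L_{r_\alpha}^\alpha$ of irreducible graded $H_\alpha$-modules up to isomorphism and degree shift, the classes $\{[L_j^\alpha]\}$ form a $\Z[q,q^{-1}]$-basis (here we use Lemma~\ref{abs}: two irreducibles isomorphic as ungraded modules differ by a unique degree shift $q^m$, so there is no hidden relation of the form $[L]=q^m[L]$, and the ungraded irreducibles indeed split into these finitely many shift-orbits).

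First I would reduce to a single $\alpha$: since $\Rep(H)=\bigoplus_\alpha \Rep(H_\alpha)$ and $\Ch$ respects this decomposition (the character of an $H_\alpha$-module is supported on $\W_\alpha$, and the $\W_\alpha$ are disjoint), it suffices to show $\Ch$ is injective on $[\Rep(H_\alpha)]$ for each fixed $\alpha$. So suppose $\sum_{j=1}^{r_\alpha} f_j(q)\,[L_j^\alpha]$ lies in $\Ker\Ch$ for some $f_j(q)\in\Z[q,q^{-1}]$, not all zero. Applying $\Ch$ gives $\sum_j f_j(q)\,\Ch L_j^\alpha = 0$ in $\Z[q,q^{-1}]\W$.

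Next I would specialize $q=1$. Since $\ch L_j^\alpha$ is obtained from $\Ch L_j^\alpha$ by setting $q=1$, we get $\sum_j f_j(1)\,\ch L_j^\alpha = 0$ in $\Z\W$. But the $\ch L_j^\alpha$ are $\Z$-linearly independent by Theorem~\ref{chinj}, so $f_j(1)=0$ for all $j$. This does not immediately finish the argument, so the real content is an induction: I would argue that in fact all $f_j(q)=0$ by a descent on the lowest-degree coefficients. Concretely, write $f_j(q)=\sum_m f_{j,m}q^m$ and let $d$ be minimal with some $f_{j,d}\neq 0$. Each $\Ch L_j^\alpha$ is, by Theorem~\ref{qhbasis} and local finite-dimensionality, a Laurent series bounded below; replacing $L_j^\alpha$ by a normalized shift we may assume each starts in a controlled degree. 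Extracting the coefficient of the appropriate minimal power of $q$ from $\sum_j f_j(q)\Ch L_j^\alpha=0$ and using that the "leading terms" of the $\Ch L_j^\alpha$ still see the linearly independent $\ch L_j^\alpha$, I would conclude $f_{j,d}=0$ for all $j$, contradicting minimality of $d$. Hence all $f_j=0$ and $\Ch$ is injective.

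The main obstacle is the last step: turning the single $q=1$ specialization into a full vanishing statement. The clean way to organize this is to note that the matrix expressing $(\Ch L_j^\alpha)_j$ in terms of the word basis $\W_\alpha$ has entries in $\Z[q,q^{-1}]$ whose $q=1$ specialization has full rank $r_\alpha$ by Theorem~\ref{chinj}; a matrix over $\Z[q,q^{-1}]$ whose evaluation at $q=1$ is injective over $\Q$ is itself injective over $\Z[q,q^{-1}]$ (if $M v = 0$ with $v\neq 0$ a vector of Laurent polynomials, clear denominators and factor out the largest power of $(q-1)$ dividing all entries of $v$, then evaluate at $q=1$ to get a nonzero kernel vector of $M|_{q=1}$, a contradiction). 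Invoking this elementary linear-algebra fact avoids the bookkeeping of the degree induction entirely and makes the deduction immediate from Theorem~\ref{chinj}.
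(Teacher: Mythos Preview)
Your proposal is correct and, once you arrive at the clean argument in your final paragraph, it is exactly the paper's proof: assume a nontrivial relation $\sum_j f_j(q)\,\Ch L_j=0$, divide through by the highest power of $(q-1)$ common to all the $f_j$, then specialize at $q=1$ and contradict Theorem~\ref{chinj}. Your earlier ``degree induction'' sketch (extracting leading terms of the $\Ch L_j$ and claiming these recover the $\ch L_j$) does not work as stated---the lowest-degree piece of $\Ch L_j$ need not coincide with $\ch L_j$ across all word spaces---but you correctly flag this as the obstacle and supersede it with the $(q-1)$-factoring trick, so the argument stands.
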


\begin{proof}
Let $L_1,\dots,L_r$ be as in Theorem~\ref{chinj}, so that
$[L_1],\dots,[L_r]$ give a basis for $[\Rep(H)]$ as a free
$\Z[q,q^{-1}]$-module. We need to show that their characters
$\Ch L_1,\dots,\Ch L_r$ are $\Z[q,q^{-1}]$-linearly independent.
Take a non-trivial linear relation
$\sum_{i=1}^r f_i(q) \Ch L_i = 0$ for $f_i(q) \in \Z[q,q^{-1}]$. Dividing by
$q-1$ if necessary we may assume that $f_i(1) \neq 0$ for some $i$. 
Then we specialize at $q=1$ to deduce that $\sum_{i=1}^r f_i(1) \ch
L_i = 0$, 
contradicting Theorem~\ref{chinj}.
\end{proof}

There are dualities $\circledast$ and $\#$ on $\Rep(H)$ and
$\Proj(H)$, respectively,
defined by $V^\circledast := \Hom_\K(V, \K)$ and
$P^\# := \Hom_H(P, H)$, with the left action of $H$ arising via the antiautomorphism $\t$.
These dualities induce involutions of the Grothendieck groups
$[\Rep(H)]$ and $[\Proj(H)]$ which are antilinear with respect to the
bar involution
$-:\Z[q,q^{-1}] \rightarrow \Z[q,q^{-1}],
q \mapsto q^{-1}.$
There is also a non-degenerate bilinear pairing
$(\cdot,\cdot):[\Proj(H)] \times [\Rep(H)] \rightarrow \Z[q,q^{-1}]$
defined from
\begin{equation}\label{pairing}
([P], [V]) := 
\Dim \Hom_H(P^\#, V)
= \overline{\Dim \Hom_H(P, V^\circledast)}.
\end{equation}

\begin{Lemma}\label{shift}
Any irreducible $H$-module $L$ can be shifted uniquely in degree so that it
becomes $\circledast$-self-dual. In that case its projective cover
$P$ is $\#$-self-dual.
\end{Lemma}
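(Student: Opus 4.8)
The plan is to exploit Lemma~\ref{abs} together with the duality $\circledast$. First I would observe that $\circledast$ is an exact contravariant functor on $\Rep(H)$ which fixes characters: since $V^\circledast = \Hom_\K(V,\K)$ with $H$ acting through the antiautomorphism $\t$, and $\t(1_\bi) = 1_\bi$ while $\t$ preserves each $1_\alpha$, we have $\Dim 1_\bi V^\circledast = \overline{\Dim 1_\bi V}$, so $\Ch V^\circledast = \overline{\Ch V}$ with the bar involution $q \mapsto q^{-1}$. In particular $\ch V^\circledast = \ch V$. Hence if $L$ is an irreducible $H$-module then $L^\circledast$ is again irreducible (it is a module over some $H_\alpha$, and it cannot have a proper submodule or quotient without $L$ having one by exactness), and it has the same ungraded character as $L$. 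By Theorem~\ref{chinj}, or more directly by the uniqueness clause of Lemma~\ref{abs}, there is a unique $m \in \Z$ with $L^\circledast \cong q^m L$ as graded modules.

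Next I would pin down the parity of that $m$. Applying $\circledast$ twice gives $(L^\circledast)^\circledast \cong L$ canonically (finite-dimensionality), so from $L^\circledast \cong q^m L$ we get $L \cong (q^m L)^\circledast = q^{-m} L^\circledast \cong q^{-m} q^m L = L$ — this only says the operation is an involution, consistent with any $m$. To see $m$ is even, I would instead track the top (or socle) degree: if $d$ is the highest degree in which $L$ is nonzero, then the lowest degree in which $L^\circledast$ is nonzero is $-d$; matching against $q^m L$, whose lowest degree is (lowest degree of $L$)$\,+ m$, and using that $L$ and $L^\circledast$ have the same graded dimension up to the bar involution, one finds the graded dimension of $L$ is symmetric about $m/2$ after the shift; concretely, $\Dim L = q^m\,\overline{\Dim L}$ forces the ``center of symmetry'' to be at $m/2$, and one checks from Lemma~\ref{parity} that all degrees occurring in $1_\bi L$ lie in a fixed residue class mod $2$, so $m \equiv 0 \pmod 2$. (Alternatively: $L$ is a quotient of a projective $P(i^n)$-type module built from nil Hecke idempotents, whose self-duality shift is even by Corollary~\ref{ln}; but the parity argument via Lemma~\ref{parity} is cleanest.) Then $q^{m/2} L$ satisfies $(q^{m/2}L)^\circledast = q^{-m/2} L^\circledast = q^{-m/2} q^m L = q^{m/2} L$, giving the desired $\circledast$-self-dual shift, and it is unique because any two such shifts differ by a self-dual degree shift $q^k$ with $q^k = q^{-k}$, forcing $k = 0$.

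For the second assertion, let $L$ be $\circledast$-self-dual and let $P$ be its projective cover in $\Proj(H_\alpha)$, so $P$ is the projective indecomposable with $L = P / \operatorname{rad} P$. I would show $P^\#$ is also a projective cover of $L$: the functor $\# = \Hom_H(-,H)$ sends $\Proj(H)$ to $\Proj(H)$ and $(H 1_\bi)^\# \cong 1_\bi H$ as right modules, i.e. $\# \circ \#$ is naturally isomorphic to the identity on $\Proj(H)$, and $\#$ interchanges ``heads'' with ``heads'' after applying $\circledast$ on $\Rep(H)$: more precisely, for a projective $P$ and irreducible $L$ one has $\Hom_H(P^\#, L) \cong \Hom_H(L^\circledast, P^\circledast\!\!) \cong \ldots$, which via the pairing (\ref{pairing}) and its non-degeneracy shows $\dim \Hom_H(P^\#, L') \ne 0$ iff $\dim\Hom_H(P, (L')^\circledast) \ne 0$ iff $L' \cong L$ (using $\circledast$-self-duality of $L$). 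Hence $P^\#$ is an indecomposable projective whose head is $L$, so $P^\# \cong q^c P$ for some $c$; but the pairing $([P],[L]) = \Dim \Hom_H(P^\#, L)$ must equal its own bar-conjugate when $P^\# \cong q^c P$ is forced to be bar-symmetric — running the same parity bookkeeping as above (or noting $([P],[L])|_{q=1} = 1$ and it is bar-invariant up to $q^c$) gives $c = 0$, i.e. $P^\# \cong P$. The step I expect to be the main obstacle is cleanly establishing that $\#$ preserves projective covers and commutes appropriately with $\circledast$ through the pairing (\ref{pairing}) — getting the direction of the duality and the antilinearity bookkeeping exactly right, rather than any deep difficulty.
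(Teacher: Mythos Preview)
Your proposal is correct and follows essentially the same route as the paper: compare ungraded characters to see $L^\circledast \cong q^m L$ via Theorem~\ref{chinj} and Lemma~\ref{abs}, invoke the parity from Lemma~\ref{parity} to force $m$ even, then shift by $q^{m/2}$; for the projective part, use the pairing (\ref{pairing}) to see $P^\#$ covers $L$. The paper handles the last step slightly more efficiently than your two-stage ``$P^\#\cong q^c P$, then $c=0$'' argument: it simply observes $\dim\Hom_H(P^\#,L)_0=\dim\Hom_H(P,L^\circledast)_0=\dim\Hom_H(P,L)_0\neq 0$ directly from (\ref{pairing}), which already forces $P^\#\cong P$ without a separate parity check.
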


\begin{proof}
Let $L$ be an irreducible $H$-module.
It is immediate from the definition of $\circledast$ that the word
spaces of $L$ and $L^\circledast$ have the same dimensions (although
they might not have the same graded dimensions).
In other words, we have that 
$\ch L = \ch (L^\circledast)$.
Combined with Theorem~\ref{chinj} and Lemma~\ref{abs}, this means that 
there is a unique $m \in \Z$ such that $L^\circledast\cong q^m L$.
Now pick $\bi$ such that $1_\bi L \neq 0$.
By Lemma~\ref{parity} we have that $$
\Dim 1_\bi L = a_0 q^p + a_1 q^{p+2} +
\cdots + a_k q^{p+2k}
$$ for some $p \in \Z$, $k \geq 0$ and 
$a_0,\dots,a_k \in \N$ with $a_0, a_k \neq 0$.
Hence $$
\Dim 1_\bi L^\circledast = q^m (a_0 q^p+\cdots+a_k q^{p+2k})
= a_k q^{-p-2k}+\cdots+a_0 q^{-p}.
$$
We deduce that $m+p=-p-2k$, hence $m$ is even.
Then $(q^{m/2} L)^\circledast\cong q^{-m/2} L^\circledast \cong
q^{m/2} L$ and we have proved the first part.
For the second part, suppose that $L \cong L^\circledast$ 
and that $P$ is the projective cover of $L$.
We need to show that the projective indecomposable module $P^\#$ also
covers $L$. This follows because
$\dim \Hom_H(P^\#, L)_0
=\dim\Hom_H(P, L^\circledast)_0 = 
\dim\Hom_H(P, L)_0 \neq 0$,
using (\ref{pairing}).
\end{proof}

Lemma~\ref{shift} implies that the sets
\begin{align}\label{cb}
\mathbf{B} &:= \left\{[P]\:\big|\:\text{for all $\#$-self-dual
    projective indecomposable $H$-modules
$P$}\right\},\\
\mathbf{B}^* &:= \left\{[L]\:\big|\:\text{for all
    $\circledast$-self-dual irreducible $H$-modules
$L$}\right\}\label{dcb}
\end{align}
give bases for $\Proj(H)$ and $\Rep(H)$, respectively, as free
$\Z[q,q^{-1}]$-modules.
Moreover these two bases are dual to each other with respect to the
pairing $(\cdot,\cdot)$.

\subsection*{Induction and restriction}
For $\beta,\gamma \in Q^+$, tensor product (horizontal composition) in the quiver Hecke category defines
a non-unital algebra
embedding $H_\beta \otimes H_\gamma \hookrightarrow H_{\beta+\gamma}$.
We denote the image of the identity
$1_\beta \otimes 1_\gamma \in H_\beta \otimes H_\gamma$
by $1_{\beta,\gamma} \in H_{\beta+\gamma}$. Then for
a graded left $H_{\beta+\gamma}$-module $U$ and
a graded left $H_\beta \otimes H_\gamma$-module $V$,
we set
\begin{equation*}
\Res^{\beta+\gamma}_{\beta,\gamma} U := 1_{\beta,\gamma} U,\qquad
\Ind^{\beta+\gamma}_{\beta,\gamma} V := H_{\beta+\gamma} 1_{\beta,\gamma} \otimes_{H_\beta
  \otimes H_\gamma} V,
\end{equation*}
which are naturally graded left $H_\beta \otimes H_\gamma$- and
$H_{\beta+\gamma}$-modules, respectively.
Both are exact functors; for induction this is a consequence of the
basis theorem (Theorem~\ref{qhbasis}).
The following Mackey-type theorem is one of the main reasons quiver
Hecke algebras are so amenable to purely algebraic techniques.

\begin{Theorem}[``Mackey filtration'']\label{mackey}
Suppose $\beta,\gamma,\beta',\gamma' \in Q^+$
are of heights $m,n,m',n'$, respectively, such that
$\beta+\gamma = \beta'+\gamma'$.
Setting $k := \min(m,n,m',n')$, let
$\{1 = w_0 < \cdots < w_k\}$ 
be the set of minimal length $S_{m'} \times S_{n'} \backslash
S_{m+n} / S_m \times S_n$-double coset representatives ordered
via the Bruhat order.
For any graded left $H_\beta \otimes H_\gamma$-module $V$,
there is a filtration
$$
0 = V_{-1}\subseteq V_0 \subseteq V_1 \subseteq\cdots\subseteq V_k = 
\Res^{\beta'+\gamma'}_{\beta',\gamma'} \circ \Ind^{\beta+\gamma}_{\beta,\gamma} (V)
$$
defined by $V_j := \sum_{i=0}^j\sum_{w \in (S_{m'} \times S_{n'}) w_i (S_m \times
  S_n)} 1_{\beta',\gamma'}\tau_w 1_{\beta,\gamma}\otimes V$.
Moreover there is a unique isomorphism of graded $H_{\beta'} \otimes
H_{\gamma'}$-modules
\begin{align*}
V_j / V_{j-1}
&\stackrel{\sim}{\rightarrow}
\bigoplus_{\beta_1,\beta_2,\gamma_1,\gamma_2}
 q^{-\beta_2 \cdot \gamma_1}\Ind^{\beta',\gamma'}_{\beta_1,
\gamma_1,\beta_2, \gamma_2} \circ I^* \circ
\Res^{\beta,\gamma}_{\beta_1,\beta_2,\gamma_1, \gamma_2} (V),\\
1_{\beta',
\gamma'} \tau_{w_j} 1_{\beta,\gamma}
\otimes 
v + V_{j-1}  &\mapsto \sum_{\beta_1,\beta_2,\gamma_1,\gamma_2}
1_{\beta_1,\gamma_1,\beta_2,\gamma_2} \otimes 1_{\beta_1,\beta_2,\gamma_1,\gamma_2} v,
\end{align*}
where 
$I:
H_{\beta_1} \otimes H_{\gamma_1}\otimes H_{\beta_2}
\otimes H_{\gamma_2}\stackrel{\sim}{\rightarrow}
H_{\beta_1} \otimes H_{\beta_2}\otimes H_{\gamma_1}
\otimes H_{\gamma_2}$ is the obvious isomorphism, and
the sums are taken over all
$\beta_1,\beta_2,\gamma_1,\gamma_2 \in Q^+$
such that 
$\beta_1+\beta_2 = \beta, \gamma_1+\gamma_2 = \gamma,
\beta_1+\gamma_1 = \beta', \beta_2+\gamma_2 = \gamma'$
and $\min(\operatorname{ht}(\beta_2), \operatorname{ht}(\gamma_1)) =
j$.
\end{Theorem}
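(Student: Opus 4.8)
The plan is to prove the Mackey filtration by reducing it to a filtration of the bimodule $H_{\beta'+\gamma'} 1_{\beta',\gamma'} \otimes_{H_\beta \otimes H_\gamma \text{-something}} \dots$, or more precisely to a direct sum decomposition of $1_{\beta',\gamma'} H_{\beta+\gamma} 1_{\beta,\gamma}$ as a $(H_{\beta'}\otimes H_{\gamma'}, H_\beta\otimes H_\gamma)$-bimodule, since the functor $\Res \circ \Ind$ applied to $V$ is just $1_{\beta',\gamma'} H_{\beta+\gamma} 1_{\beta,\gamma} \otimes_{H_\beta\otimes H_\gamma} V$. So the whole statement is really a statement about this bimodule, and tensoring with $V$ at the end is a formality once one checks the claimed maps are well-defined bimodule homomorphisms. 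First I would invoke the Basis theorem (Theorem~\ref{qhbasis}): a basis for $H_{\beta+\gamma}$ is given by $x_1^{m_1}\cdots x_n^{m_n} \tau_w 1_\bi$, and by the usual argument (multiplying reduced expressions, using that $\ell(xwy) = \ell(x) + \ell(w) + \ell(y)$ for $x,y$ ranging over the parabolics and $w$ a minimal double coset representative) one gets that $1_{\beta',\gamma'} H_{\beta+\gamma} 1_{\beta,\gamma}$ is a free left $\Pol_{\beta'+\gamma'}$-module (restricted to the appropriate idempotents) with basis indexed by $w \in (S_{m'}\times S_{n'}) \backslash S_{m+n} / (S_m\times S_n)$ double cosets, i.e. by the elements $\tau_w$ for $w$ in each full double coset $(S_{m'}\times S_{n'}) w_i (S_m\times S_n)$. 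This gives the associated-graded pieces the right size.

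Next I would define the filtration $V_j$ exactly as stated, using the Bruhat order on the double coset representatives $w_0 < \cdots < w_k$; the key point is that this is actually a filtration, i.e. $V_{j-1} \subseteq V_j$ and each $V_j$ is a genuine $H_{\beta'}\otimes H_{\gamma'}$-submodule. Closure under the left $H_{\beta'}\otimes H_{\gamma'}$-action is where the relations of the quiver Hecke algebra really enter: one must straighten products $\tau_k \tau_w$ (for $\tau_k$ a generator of $H_{\beta'}\otimes H_{\gamma'}$ on the left) back into the span of the $1_{\beta',\gamma'}\tau_{w'} 1_{\beta,\gamma}$ with $w' $ in double cosets $\leq w_i$, using the quadratic relation $\tau_k^2 1_\bi = q_{i_k,i_{k+1}}(x_k,x_{k+1})1_\bi$, the (deformed) braid relation, and the commutation relations between $\tau$'s and $x$'s. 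The point is that the ``error terms'' in these relations are all of strictly lower length, hence land in earlier stages of the filtration — this is the standard Bruhat-order bookkeeping, and it is essentially the same argument that proves the analogous Mackey theorem for affine Hecke algebras. Then I would write down the candidate map $V_j/V_{j-1} \to \bigoplus q^{-\beta_2\cdot\gamma_1}\Ind \circ I^* \circ \Res(V)$ on the generator $1_{\beta',\gamma'}\tau_{w_j}1_{\beta,\gamma}\otimes v$ as in the statement, check it is well-defined (the only relations to verify are those coming from the $H_\beta\otimes H_\gamma$-action being moved across the tensor, which decompose according to the block structure of $w_j$), and check it is a graded $H_{\beta'}\otimes H_{\gamma'}$-module homomorphism; the degree shift $q^{-\beta_2\cdot\gamma_1}$ is forced by tracking the degree of $\tau_{w_j}1_{\beta,\gamma}$ using the grading formula $\deg(w;\bi) = -\sum_{w(j)>w(k)} \alpha_{i_j}\cdot\alpha_{i_k}$ of (\ref{degdef}) applied to the double coset representative $w_j$, whose inversions across the four blocks contribute exactly $-\beta_2\cdot\gamma_1$ worth of ``crossing'' degree between the second and third blocks.

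Finally, to see the map is an isomorphism, I would compare graded dimensions (or ranks): the left side $V_j/V_{j-1}$ has, by the basis theorem bookkeeping, a basis over $\Pol$ indexed by the full double coset $(S_{m'}\times S_{n'})w_j(S_m\times S_n)$ tensored with a basis of $V$, while the right side, being an induction from a tensor product of four quiver Hecke algebras, has a basis of the same size once one sums over all $(\beta_1,\beta_2,\gamma_1,\gamma_2)$ with $\min(\height(\beta_2),\height(\gamma_1)) = j$ — the combinatorial identity needed is precisely the bijection between such a double coset (with fixed $j$) and the disjoint union over the allowed block-size quadruples of (cosets $S_{\beta'}/S_{\beta_1}\times S_{\gamma_1}\times\dots$) compatible data, which is the classical parametrization of double cosets in symmetric groups by ``intersection matrices.'' The hardest part will be the closure-under-the-left-action step, i.e.\ verifying that the $V_j$ form a filtration by submodules: this requires careful Bruhat-order control over how the deformed braid relations and the $\tau x$-commutation relations generate lower-order terms, and it is the one place where the non-triviality of the quiver Hecke relations (as opposed to the clean affine Hecke case) shows up. Everything downstream — well-definedness, $H_{\beta'}\otimes H_{\gamma'}$-equivariance, and the dimension count establishing isomorphism — is then routine, if lengthy.
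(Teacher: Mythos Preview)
Your proposal is correct and is precisely the standard argument; in fact it is far more detailed than what the paper supplies. The paper's ``proof'' consists of a single sentence (``Think about the picture'') together with a diagram of the block permutation $w_j$, and in the Notes refers the reader to \cite[Proposition 2.18]{KL1} for the actual details --- which are exactly the bimodule/basis-theorem/Bruhat-order bookkeeping you outline.
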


\begin{proof}
Think about the picture
$$
\begin{picture}(100,98)
\put(-25,45){$w_j=$}
\put(11,15){$_{\beta_1}$}
\put(34,15){$_{\beta_2}$}
\put(61,15){$_{\gamma_1}$}
\put(86,15){$_{\gamma_2}$}
\put(11,75){$_{\beta_1}$}
\put(30,75){$_{\gamma_1}$}
\put(57,75){$_{\beta_2}$}
\put(86,75){$_{\gamma_2}$}
\put(7,10){$\underbrace{\hspace{16mm}}_\beta$}
\put(58 ,10){$\underbrace{\hspace{16mm}}_\gamma$}
\put(7,80){$\overbrace{\hspace{12.5mm}}^{\beta'}$}
\put(48 ,80){$\overbrace{\hspace{19mm}}^{\gamma'}$}
\put(10,20){\line(0,1){50}}
\put(20,20){\line(0,1){50}}
\put(30,20){\line(2,5){20}}
\put(40,20){\line(2,5){20}}
\put(50,20){\line(2,5){20}}
\put(60,20){\line(-3,5){30}}
\put(70,20){\line(-3,5){30}}
\put(80,20){\line(0,1){50}}
\put(90,20){\line(0,1){50}}
\put(100,20){\line(0,1){50}}
\end{picture}
$$
\end{proof}

If $X$ is a graded left $H_\beta$-module and $Y$ is a graded left
$H_\gamma$-module, hence their outer tensor product $X \boxtimes Y$
is a graded left $H_\beta \otimes H_\gamma$-module, we set
$$
X \circ Y := \Ind_{\beta,\gamma}^{\beta+\gamma} (X \boxtimes Y).
$$
This is finite dimensional if both $X$ and $Y$ are finite dimensional
(by the basis theorem), and it is projective if both $X$ and $Y$
are projective (indeed we obviously have that $H 1_\bi \circ H 1_\bj =
H 1_{\bi\bj}$).
Hence $\circ$ defines a tensor product operation on both $\Rep(H)$ and
$\Proj(H)$,
making these into monoidal categories. So the Grothendieck groups
$[\Rep(H)]$ and $[\Proj(H)]$ become $\Z[q,q^{-1}]$-algebras.
Also introduce a bilinear multiplication $\circ$ on $\Z[q,q^{-1}]\W$, which we call the {\em
 shuffle product}, by declaring that
for words $\bi$ and $\bj$ of lengths $m$ and $n$, respectively, that
\begin{equation}
\bi \circ \bj := \sum_{\substack{w \in S_{m+n} \\ w(1) < \cdots <
    w(m)\\w(m+1) < \cdots < w(m+n)}}
q^{\deg(w;\bi\bj)} w(\bi\bj),
\end{equation}
recalling (\ref{degdef}).
This makes $\Z[q,q^{-1}]\W$ into an associative
$\Z[q,q^{-1}]$-algebra,
which we call the {\em quantum shuffle algebra}.
The first important consequence of the Mackey theorem (iterated!) is as follows.

\begin{Corollary}[``Shuffle Lemma'']\label{shuffle}
For finite dimensional $H$-modules $X$ and $Y$, 
we have that
$$
\Ch (X \circ Y) = (\Ch X) \circ (\Ch Y).
$$
Hence $\Ch:[\Rep(H)] \hookrightarrow \Z[q,q^{-1}]\W$ 
is an
injective algebra homomorphism.
\end{Corollary}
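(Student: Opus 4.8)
Both conclusions reduce to the object-level identity $\Ch(X\circ Y)=(\Ch X)\circ(\Ch Y)$. Once that is established: $\Ind_{\beta,\gamma}^{\beta+\gamma}(-\boxtimes-)$ is exact in each variable, so $\circ$ is well defined on $[\Rep(H)]$, and $\Ch$ is additive on short exact sequences, so the identity extends $\Z$-bilinearly to all classes; the one-dimensional $H_0$-module in degree $0$ is the unit of $[\Rep(H)]$ and has character the empty word, which is the unit of the quantum shuffle algebra; and $\Ch$ is injective by Corollary~\ref{chinjc}. This gives the second sentence.

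Fix $\beta,\gamma\in Q^+$ of heights $m,n$, write $1_{\beta,\gamma}=\sum_{\bi\in\W_\beta,\,\bj\in\W_\gamma}1_{\bi\bj}$, and let $D\subseteq S_{m+n}$ be the set of $(m,n)$-shuffles, i.e.\ the minimal-length representatives of the cosets in $S_{m+n}/(S_m\times S_n)$; these are exactly the permutations summed over in the definition of $\bi\circ\bj$. The key structural input, beyond Theorem~\ref{qhbasis}, is that $H_{\beta+\gamma}1_{\beta,\gamma}$ is free as a graded right $H_\beta\otimes H_\gamma$-module on the basis $\{\tau_d 1_{\beta,\gamma}\mid d\in D\}$, with each $\tau_d 1_{\bi\bj}$ homogeneous of degree $\deg(d;\bi\bj)$. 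Freeness is a consequence of the basis theorem (the exactness of $\Ind$ already gives flatness); to pin down the basis one writes each $w\in S_{m+n}$ as $w=dv$ with $d\in D$ the minimal-length representative of $w(S_m\times S_n)$ and $v\in S_m\times S_n$, chooses distinguished reduced expressions compatibly so that $\tau_w=\tau_d\tau_v$, and reorganizes the monomial basis of Theorem~\ref{qhbasis}---moving polynomial factors past $\tau_d$ at the cost of lower-length corrections---into $\bigcup_{d\in D}\tau_d 1_{\beta,\gamma}\cdot(\text{monomial basis of }H_\beta\otimes H_\gamma)$; a graded-dimension count against Theorem~\ref{qhbasis} confirms this, and the degree of $\tau_d 1_{\bi\bj}$ comes out to $\deg(d;\bi\bj)$ because $\tau_k 1_\bi$ has degree $-\alpha_{i_k}\cdot\alpha_{i_{k+1}}$ and these contributions add along a reduced word for $d$ to give (\ref{degdef}). (Alternatively this whole statement, with its grading, can be read off the Mackey filtration of Theorem~\ref{mackey} iterated to restrict $X\circ Y$ to $H_{\alpha_{k_1}}\otimes\cdots\otimes H_{\alpha_{k_n}}$ one word at a time, the shifts $q^{-\beta_2\cdot\gamma_1}$ there accumulating to the $\deg(d;\bi\bj)$.)

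Granting that input, $\Ind_{\beta,\gamma}^{\beta+\gamma}(X\boxtimes Y)\cong\bigoplus_{d\in D}\tau_d 1_{\beta,\gamma}\otimes(X\boxtimes Y)$ as graded vector spaces, with $\tau_d 1_{\bi\bj}\otimes(X\boxtimes Y)$ sitting in the word space $d(\bi\bj)$ and shifted in degree by $\deg(d;\bi\bj)$. Applying $1_\bk$, grouping by which $\bi\bj$ is sent to $\bk$ by $d$, and using $\Dim 1_{\bi\bj}(X\boxtimes Y)=(\Dim 1_\bi X)(\Dim 1_\bj Y)$ gives
\[
\Dim 1_\bk(X\circ Y)=\sum_{\bi\in\W_\beta,\ \bj\in\W_\gamma}\ \sum_{\substack{d\in D\\ d(\bi\bj)=\bk}}q^{\deg(d;\bi\bj)}(\Dim 1_\bi X)(\Dim 1_\bj Y),
\]
which is the coefficient of $\bk$ in $(\Ch X)\circ(\Ch Y)$; summing over $\bk$ proves the identity. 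The one nonroutine step is the graded right-module structure of $H_{\beta+\gamma}1_{\beta,\gamma}$: extracting the explicit basis with its degrees from the basis theorem needs the slightly delicate monomial reorganization above (or, equivalently, careful tracking of degree shifts through the iterated Mackey filtration). Everything else is bookkeeping with word spaces and graded dimensions.
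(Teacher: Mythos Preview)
Your argument is correct. The one point to flag is logical order: the paper uses the \emph{first} sentence of this corollary in the proof of Theorem~\ref{chinj}, which in turn feeds into Corollary~\ref{chinjc}; so when you invoke Corollary~\ref{chinjc} for injectivity you are only using the character identity already established, and there is no circularity. You handle this correctly, but it is worth saying explicitly.

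On approach: the paper's one-line hint is ``iterated Mackey,'' meaning one computes $1_{\bk}(X\circ Y)$ by repeatedly applying Theorem~\ref{mackey} to peel off one letter of $\bk$ at a time, tracking the degree shifts $q^{-\beta_2\cdot\gamma_1}$ until they accumulate to $q^{\deg(d;\bi\bj)}$. Your primary route is instead a single direct appeal to the basis theorem to exhibit $\{\tau_d 1_{\beta,\gamma}\mid d\in D\}$ as a graded basis of $H_{\beta+\gamma}1_{\beta,\gamma}$ over $H_\beta\otimes H_\gamma$, which gives the vector-space decomposition of $X\circ Y$ in one step; you then mention the iterated Mackey computation as an alternative. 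The two are equivalent in content---your freeness statement is essentially the $j=0$ layer of Mackey done all at once---but your version is a bit cleaner for this particular corollary since it avoids the bookkeeping of an induction, while the paper's route has the advantage of using Theorem~\ref{mackey} as a black box without revisiting the monomial reorganization inside the basis theorem.
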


The restriction functor $\Res^{\beta+\gamma}_{\beta,\gamma}$
sends finite dimensional modules to finite dimensional modules
(obviously) and projectives to projectives (by the basis theorem).
Summing over all $\beta,\gamma$, we get induced
$\Z[q,q^{-1}]$-module homomorphisms
$[\Rep(H)] \rightarrow [\Rep(H)] \otimes_{\Z[q,q^{-1}]} [\Rep(H)]$ and
$[\Proj(H)] \rightarrow [\Proj(H)] \otimes_{\Z[q,q^{-1}]} [\Proj(H)]$
making our Grothendieck groups into coalgebras.
The Mackey theorem also implies the following.

\begin{Corollary}
The Grothendieck group $[\Rep(H)]$ is a twisted bialgebra (i.e.
comultiplication is an algebra homorphism)
with respect to the multiplication
on
$[\Rep(H)] \otimes_{\Z[q,q^{-1}]} [\Rep(H)]$ 
defined by
\begin{equation}\label{twist}
(a \otimes b) (c \otimes d) := q^{-\beta\cdot\gamma} ac
\otimes bd
\end{equation}
for $a \in [\Rep(H_\alpha)]$, $b \in [\Rep(H_\beta)]$, $c \in
[\Rep(H_\gamma)]$
and $d \in [\Rep(H_\delta)]$.
Similarly $[\Proj(H)]$ is a twisted bialgebra in the same sense.
\end{Corollary}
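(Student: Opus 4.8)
The strategy is to read the identity straight off the Mackey filtration of Theorem~\ref{mackey}; the twisted product~(\ref{twist}) was arranged precisely so that this works. Write $\Delta\colon[\Rep(H)]\to[\Rep(H)]\otimes_{\Z[q,q^{-1}]}[\Rep(H)]$ for the comultiplication already constructed, so that $\Delta[V]=\sum_{\beta+\gamma=\alpha}[\Res^{\alpha}_{\beta,\gamma}V]$ for $V\in\Rep(H_\alpha)$, using the standard identification $[\Rep(H_\beta\otimes H_\gamma)]=[\Rep(H_\beta)]\otimes_{\Z[q,q^{-1}]}[\Rep(H_\gamma)]$ valid because $\K$ is algebraically closed. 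This map is well defined and $\Z[q,q^{-1}]$-linear by exactness of $\Res$, it is counital and coassociative (already noted), and the twisted multiplication on the tensor square is associative by a short computation with the bilinear form. Likewise $[X]\cdot[Y]=[X\circ Y]$ is well defined by exactness of $\Ind$. So the entire content is that $\Delta$ is multiplicative.

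First I would fix $X\in\Rep(H_\mu)$ and $Y\in\Rep(H_\nu)$ and expand
$$\Delta\bigl([X]\cdot[Y]\bigr)=\sum_{\beta'+\gamma'=\mu+\nu}\bigl[\Res^{\mu+\nu}_{\beta',\gamma'}\,\Ind^{\mu+\nu}_{\mu,\nu}(X\boxtimes Y)\bigr].$$
To each summand I apply Theorem~\ref{mackey}. A finite module filtration passes to the sum of the classes of its subquotients in the Grothendieck group, so the $(\beta',\gamma')$-summand equals
$$\sum_{j}\;\sum_{\mu_1,\mu_2,\nu_1,\nu_2}q^{-\mu_2\cdot\nu_1}\Bigl[\Ind^{\beta',\gamma'}_{\mu_1,\nu_1,\mu_2,\nu_2}\circ I^*\circ\Res^{\mu,\nu}_{\mu_1,\mu_2,\nu_1,\nu_2}(X\boxtimes Y)\Bigr],$$
the inner sum running over $\mu_1+\mu_2=\mu$, $\nu_1+\nu_2=\nu$, $\mu_1+\nu_1=\beta'$, $\mu_2+\nu_2=\gamma'$ with $\min(\height\mu_2,\height\nu_1)=j$. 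Summing over $j$ and over $(\beta',\gamma')$ merely reorganizes everything into a single sum over all $\mu_1,\mu_2,\nu_1,\nu_2\in Q^+$ with $\mu_1+\mu_2=\mu$ and $\nu_1+\nu_2=\nu$.

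Next I identify each term. Using that $\Res$ of an outer tensor product is the outer tensor product of the restrictions, that $I^*$ simply interchanges the two middle tensor factors, and that $\Ind^{\beta',\gamma'}_{\mu_1,\nu_1,\mu_2,\nu_2}$ factors as $\Ind^{\beta'}_{\mu_1,\nu_1}(\,\cdot\,)\boxtimes\Ind^{\gamma'}_{\mu_2,\nu_2}(\,\cdot\,)$ by transitivity of induction, the subquotient indexed by $(\mu_1,\mu_2,\nu_1,\nu_2)$ becomes $q^{-\mu_2\cdot\nu_1}(X_1\circ Y_1)\boxtimes(X_2\circ Y_2)$, where $[\Res^{\mu}_{\mu_1,\mu_2}X]$ and $[\Res^{\nu}_{\nu_1,\nu_2}Y]$ are written as $\Z[q,q^{-1}]$-combinations of outer tensor products $X_1\boxtimes X_2$ and $Y_1\boxtimes Y_2$. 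Passing to classes, $\Delta([X]\cdot[Y])$ becomes $\sum q^{-\mu_2\cdot\nu_1}\bigl([X_1][Y_1]\bigr)\otimes\bigl([X_2][Y_2]\bigr)$. On the other hand $\Delta[X]\cdot\Delta[Y]$ is the twisted product of $\sum[X_1]\otimes[X_2]$ and $\sum[Y_1]\otimes[Y_2]$, and applying~(\ref{twist}) termwise with $b=[X_2]\in[\Rep(H_{\mu_2})]$ and $c=[Y_1]\in[\Rep(H_{\nu_1})]$ contributes exactly the scalar $q^{-\mu_2\cdot\nu_1}$, yielding the very same sum. Hence $\Delta([X]\cdot[Y])=\Delta[X]\cdot\Delta[Y]$, and $\Z[q,q^{-1}]$-bilinearity finishes the $[\Rep(H)]$ case. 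The $[\Proj(H)]$ case is word for word the same, with one extra remark: restriction and induction preserve projectives (by the basis theorem), so every subquotient of the Mackey filtration of a projective is projective and the filtration splits, which legitimizes passing to classes in the split Grothendieck group.

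The one genuinely delicate point—and essentially the only place one could slip up—is the bookkeeping: as $(\beta',\gamma')$ and $j$ vary, the quadruples $(\mu_1,\mu_2,\nu_1,\nu_2)$ produced by Theorem~\ref{mackey} must exhaust, without repetition, all decompositions $\mu=\mu_1+\mu_2$, $\nu=\nu_1+\nu_2$ (here one uses $\min(\height\mu_2,\height\nu_1)\le\min(\height\mu,\height\nu,\height\beta',\height\gamma')$ for every such quadruple), and the degree shift $q^{-\mu_2\cdot\nu_1}$ of the Mackey subquotient must match on the nose the scalar that~(\ref{twist}) attaches to $[X_1][Y_1]\otimes[X_2][Y_2]$. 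Both hold by the way the definitions were set up, but it is exactly these matchings that make the statement true rather than coincidental. Everything else is the routine transitivity of induction and restriction and the behaviour of $\Res$ on outer tensor products.
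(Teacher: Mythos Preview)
Your proof is correct and is precisely the argument the paper has in mind: the paper gives no proof at all, merely stating that ``the Mackey theorem also implies the following,'' and you have carefully spelled out the filtration-and-subquotient bookkeeping that turns Theorem~\ref{mackey} into the bialgebra identity. Your remark on the projective case (that the Mackey filtration splits because all layers are projective, so one may pass to the split Grothendieck group) is exactly the extra point needed there.
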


Note finally if we extend the pairing between $[\Proj(H)]$ and
$[\Rep(H)]$ diagonally to a pairing between $[\Proj(H)]\otimes_{\Z[q,q^{-1}]}
[\Proj(H)]$
and $[\Rep(H)]\otimes_{\Z[q,q^{-1}]} [\Rep(H)]$, i.e. we set
$(a\otimes b, c \otimes d) := (a,c) (b,d)$, then
the multiplication on $[\Rep(H)]$ is dual to the comultiplication on
$[\Proj(H)]$, and vice versa. This follows from the definition
(\ref{pairing}) together with Frobenius reciprocity.
In other words the $Q^+$-graded twisted bialgebras $[\Rep(H)]$ and $[\Proj(H)]$
are graded dual to each other.

\subsection*{The categorification theorem}
We now connect
$[\Rep(H)]$ and $[\Proj(H)]$ to Lusztig's algebra $\f$, that is, the $\Q(q)$-algebra on generators
$\theta_i\:(i \in I)$ 
subject to the {\em quantum Serre relations}
\begin{equation*}
\sum_{r+s=1 - \alpha_i \cdot \alpha_j}
(-1)^r
\theta_i^{(r)}\theta_j \theta_i^{(s)} = 0
\end{equation*}
for all $i, j \in I$ and $r \geq 1$,
where $\theta_i^{(r)}$ denotes the {\em divided power} $\theta_i^r /
[r]!$.
There is a $Q^+$-grading $\f = \bigoplus_{\alpha \in Q^+} \f_\alpha$
defined so that $\theta_i$ is in degree $\alpha_i$.
We use this to make $\f \otimes \f$ into an algebra with
twisted multiplication defined in the same way as (\ref{twist}).
Then there is a unique algebra homomorphism
\begin{equation}\label{twistedco}
r:\f \rightarrow \f \otimes \f,
\qquad
\theta_i\mapsto \theta_i \otimes 1 + 1 \otimes \theta_i
\end{equation}
making $\f$ into a twisted bialgebra.
We also need Lusztig's $\Z[q,q^{-1}]$-form $\f_{\Z[q,q^{-1}]}$ for
$\f$, which is the $\Z[q,q^{-1}]$-subalgebra of
$\f$ generated by all $\theta_i^{(r)}$.
We always identify $\f$ with $\Q(q) \otimes_{\Z[q,q^{-1}]} \f_{\Z[q,q^{-1}]}$.
The first important categorification theorem is as follows.

\begin{Theorem}[Khovanov-Lauda]\label{gamma}
There is an isomorphism of twisted bialgebras
$$
\gamma:\f_{\Z[q,q^{-1}]} \stackrel{\sim}{\rightarrow} [\Proj(H)]
$$
such that
$\theta_i \mapsto[H_{\alpha_i}]$ for each $i \in I$.
\end{Theorem}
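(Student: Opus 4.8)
The plan is to construct $\gamma$ as an algebra homomorphism, prove it is surjective, deduce injectivity from a rank count, and obtain the twisted‑bialgebra compatibility essentially for free.

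\textbf{Step 1: defining $\gamma$.} Since $\f$ is presented by the generators $\theta_i$ subject only to the quantum Serre relations, it suffices to set $\gamma(\theta_i):=[H_{\alpha_i}]$ and to check that the classes $[H_{\alpha_i}]$ satisfy those relations in $[\Proj(H)]$. One can do this directly, by exhibiting short exact sequences of projective $H$-modules that categorify the Serre relations; more economically, I would pass to the perfect pairing $(\cdot,\cdot)\colon[\Proj(H)]\times[\Rep(H)]\to\Z[q,q^{-1}]$ of Lemma~\ref{shift}, which reduces the vanishing of the Serre combination to pairing it against an arbitrary $[V]\in[\Rep(H)]$. By $(\ref{pairing})$ and Frobenius reciprocity this pairing is computed from iterated restrictions of $V$, which the Mackey filtration (Theorem~\ref{mackey}) expresses through double cosets of symmetric groups; unwinding, one is left with exactly the elementary (if fiddly) fact that the single letters $i$ satisfy the quantum Serre relations inside the quantum shuffle algebra $\Z[q,q^{-1}]\W$, which also falls out of the Shuffle Lemma (Corollary~\ref{shuffle}) applied to the rank-one modules $L(i)$. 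Granting this, $\gamma$ is a well-defined algebra map $\f\to\Q(q)\otimes[\Proj(H)]$, and it carries $\f_{\Z[q,q^{-1}]}$ into $[\Proj(H)]$ because $\gamma(\theta_i^{(r)})=[H_{\alpha_i}]^{\circ r}/[r]!=[H_{r\alpha_i}]/[r]!=[P(i^r)]$, using the identity $H1_\bi\circ H1_\bj=H1_{\bi\bj}$ together with Corollary~\ref{ln}. Finally, since $r$ and the comultiplication on $[\Proj(H)]$ are both algebra maps into twisted tensor squares, checking that $\gamma$ intertwines them reduces to the generators $\theta_i\leftrightarrow[H_{\alpha_i}]$, i.e.\ to the rank-one computation $r(\theta_i)=\theta_i\otimes1+1\otimes\theta_i$ versus $[\Res^{\alpha_i}_{\bullet}H_{\alpha_i}]=[H_{\alpha_i}]\otimes1+1\otimes[H_{\alpha_i}]$.

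\textbf{Step 2: surjectivity.} For a word $\bi=i_1\cdots i_n\in\W_\alpha$ one has $H_\alpha1_\bi=H_{\alpha_{i_1}}\circ\cdots\circ H_{\alpha_{i_n}}$, so $[H_\alpha1_\bi]=\gamma(\theta_{i_1}\cdots\theta_{i_n})\in\Im\gamma$; hence it suffices to show the classes $[H_\alpha1_\bi]$ ($\bi\in\W_\alpha$) span $[\Proj(H_\alpha)]$ over $\Z[q,q^{-1}]$. This is the crux. Fixing the total order on $I$ from Lemma~\ref{parity} and ordering $\W_\alpha$ lexicographically, the plan is to prove a triangularity statement: for each $\bi$ the projective $H_\alpha1_\bi$ has a distinguished indecomposable summand $P(\bi)$, occurring with graded multiplicity $1$ in the extreme degree, while every other indecomposable summand is of the form $P(\bj)$ for a strictly larger (or smaller, depending on conventions) word $\bj$. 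The inputs are the nil Hecke structure on the blocks $H_{n\alpha_i}$ and the Shuffle Lemma, which together pin down the leading term of $\Ch(H_\alpha1_\bi)$, i.e.\ of $\Dim1_\bj H_\alpha1_\bi$ as computed by Theorem~\ref{qhbasis}. Unitriangularity then lets one invert and conclude that each $[P(\bi)]$, hence every indecomposable projective, lies in the span.

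\textbf{Step 3: injectivity and conclusion.} Surjectivity of $\gamma$ over $\Q(q)$ gives $\dim_{\Q(q)}\f_\alpha\ge\operatorname{rank}_{\Z[q,q^{-1}]}[\Proj(H_\alpha)]$, and the perfect pairing of Lemma~\ref{shift} gives $\operatorname{rank}[\Proj(H_\alpha)]=\operatorname{rank}[\Rep(H_\alpha)]$. Conversely, the Shuffle Lemma realizes $\Ch$ as an injective algebra map $[\Rep(H)]\hookrightarrow\Z[q,q^{-1}]\W$ whose image contains all shuffle products of single letters, hence the shuffle subalgebra they generate, which over $\Q(q)$ is isomorphic to $\f$ by the classical shuffle presentation of Lusztig's algebra; thus $\operatorname{rank}[\Rep(H_\alpha)]\ge\dim_{\Q(q)}\f_\alpha$. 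All these quantities therefore coincide, so $\gamma_{\Q(q)}$ is an isomorphism, whence $\gamma\colon\f_{\Z[q,q^{-1}]}\to[\Proj(H)]$ is injective; being also surjective by Step 2 and a map of twisted bialgebras by Step 1, it is the desired isomorphism. I expect the main obstacle to be the triangularity/spanning claim of Step 2: checking the Serre relations and doing the rank count are comparatively routine once the shuffle-algebra input is in hand, whereas controlling the indecomposable summands of $H_\alpha1_\bi$ requires genuine work with the combinatorics of words and the nil Hecke pieces.
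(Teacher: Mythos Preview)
Your Steps 1 and 3 are broadly reasonable (though your ``more economical'' pairing argument for the Serre relations is murkier than you suggest; the paper simply exhibits an explicit exact complex categorifying them). The real problem is Step 2.

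Your triangularity claim is ill-posed: you assert that each $H_\alpha 1_\bi$ has a distinguished indecomposable summand ``$P(\bi)$'' and that the resulting matrix is unitriangular. But indecomposable projectives are in bijection with irreducibles, and there are generally \emph{fewer} irreducibles than words in $\W_\alpha$ (already for $\alpha=\alpha_1+\alpha_2+\alpha_3$ in type $A_3$ there are six words but four irreducibles). So the notation $P(\bi)$ cannot mean pairwise non-isomorphic summands, and a square unitriangular matrix from words to indecomposables does not exist. You could try to salvage this by first classifying irreducibles and picking one distinguished word per irreducible, but that is exactly the hard part of the subject and is not something one can extract from ``the nil Hecke structure and the Shuffle Lemma'' alone; you correctly flag this step as the main obstacle, but it is not merely an obstacle---as written it is a gap.

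The paper sidesteps this completely by \emph{dualizing}. It defines $\gamma^*:[\Rep(H)]\to\f_{\Z[q,q^{-1}]}^*$ as the adjoint of $\gamma$ with respect to the perfect pairings, and checks the commutative triangle $\Ch=\chi\circ\gamma^*$ where $\chi:\f^*\hookrightarrow\Z[q,q^{-1}]\W$ is Lusztig's character map. Since $\Ch$ is already known to be injective (Corollary~\ref{chinjc}), $\gamma^*$ is injective, and surjectivity of $\gamma$ follows formally. No analysis of the summands of $H_\alpha 1_\bi$ is needed. For injectivity the paper then lifts Lusztig's form to $[\Proj(H)]$, checks it agrees with the representation-theoretic form via the identity $(\theta_\bj,\theta_\bi)=\Dim 1_\bj H_\alpha 1_\bi$, and invokes non-degeneracy; your rank-count via the shuffle realization of $\f$ is a legitimate alternative for that last step, but only once surjectivity is in hand.
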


In the rest of the subsection we are going to explain the proof of this,
since it is instructive, elementary, and reveals many further connections between
$\f$ and $H$.
To start with we extend scalars and show that there is a $\Q(q)$-algebra homomorphism
$$
\hat\gamma:\f \rightarrow \Q(q) \otimes_{\Z[q,q^{-1}]} [\Proj(H)],
\qquad
\theta_i \mapsto [H_{\alpha_i}].
$$
To prove this we must show that the Serre relations hold in the right hand algebra.
For $\bi = i_1\cdots i_n \in \W$ let $\theta_\bi := \theta_{i_1}\cdots \theta_{i_n}$.
Note that the homomorphsm $\gamma$ (if it exists) should send
$\theta_\bi$ to $[H 1_\bi]$. Hence,
recalling the definition (\ref{pin}) and using Corollary~\ref{ln},
it should send $\theta_i^{(n)}$ 
to $[P(i^n)]$.
Thus we need to show that
$$
\sum_{r+s=1-\alpha_i\cdot\alpha_j}(-1)^r [P(i^r) \circ P(j) \circ P(i^s)] = 0
$$
in $[\Proj(H)]$.
This follows from the following stronger statement, which gives a categorification of the Serre relations.

\begin{Lemma}\label{serrerelations}
Take $i, j \in I$ with $i \neq j$ and let $n := 1 - \alpha_i\cdot \alpha_j$.
Let $\alpha := n \alpha_i + \alpha_j$.
For $r,s\geq 0$ with $r+s=n$, let $e_{r,s} \in H_\alpha$ denote the image
of $e_r \otimes 1_j \otimes e_s$ under the embedding
$H_{r \alpha_i} \otimes H_{\alpha_j} \otimes H_{s \alpha_i} \hookrightarrow H_\alpha$,
so that $P(i^r) \circ P(j) \circ P(i^s)=
q^{\frac{1}{2}r(r-1)+\frac{1}{2}s(s-1)}H_\alpha e_{r,s}$.
Then there is an exact sequence
\begin{multline*}
0 \rightarrow q^{\frac{1}{2}n(n-1)}H_\alpha e_{0,n}
\rightarrow 
\cdots\\
\rightarrow q^{\frac{1}{2}r(r-1)+\frac{1}{2}s(s-1)} H_\alpha e_{r,s}
\stackrel{d_{r,s}}{\rightarrow} q^{\frac{1}{2}(r+1)r+\frac{1}{2}(s-1)(s-2)}
H_\alpha e_{r+1,s-1}\rightarrow\\\cdots\rightarrow
q^{\frac{1}{2}n(n-1)}H_\alpha e_{n, 0}
\rightarrow 0.
\end{multline*}
The homomorphism $d_{r,s}$ is given by right multiplication by
$$
\begin{picture}(100,98)
\put(-25,45){$\tau_{r,s}:=$}
\put(8,15){$_i$}
\put(18,15){$_i$}
\put(28,15){$_i$}
\put(38,15){$_i$}
\put(48,15){$_i$}
\put(58,15){$_j$}
\put(68,15){$_i$}
\put(78,15){$_i$}
\put(88,15){$_i$}
\put(98,15){$_i$}
\put(8,75){$_i$}
\put(18,75){$_i$}
\put(28,75){$_i$}
\put(38,75){$_i$}
\put(48,75){$_j$}
\put(58,75){$_i$}
\put(68,75){$_i$}
\put(78,75){$_i$}
\put(88,75){$_i$}
\put(98,75){$_i$}
\put(7,10){$\underbrace{\hspace{16mm}}_{{r+1}}$}
\put(68 ,10){$\underbrace{\hspace{12.5mm}}_{{s-1}}$}
\put(7,80){$\overbrace{\hspace{12.5mm}}^{r}$}
\put(58 ,80){$\overbrace{\hspace{16mm}}^{s}$}
\put(70,20){\line(0,1){50}}
\put(80,20){\line(0,1){50}}
\put(90,20){\line(0,1){50}}
\put(100,20){\line(0,1){50}}
\put(10,20){\line(1,1){50}}
\put(20,20){\line(-1,5){10}}
\put(30,20){\line(-1,5){10}}
\put(40,20){\line(-1,5){10}}
\put(50,20){\line(-1,5){10}}
\put(60,20){\line(-1,5){10}}
\end{picture}
$$
\end{Lemma}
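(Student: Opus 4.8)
The plan is to establish in turn that each $d_{r,s}$ is a well-defined homomorphism between the stated projectives, that the resulting sequence is a complex, and --- the real work --- that it is exact. For the first point, recall the nil Hecke facts that enter: since $e_r$ corresponds under Theorem~\ref{nil} to the matrix unit $e_{1,1}$ of Corollary~\ref{mx}, one has $e_r\tau_k = 0$ for $1 \leq k \leq r-1$ (equivalently $\tau_k\tau_{w_{[1,r]}} = 0$, as $\ell(t_k w_{[1,r]}) < \ell(w_{[1,r]})$ and $\tau_k^2 = 0$), and --- using the normalisation (\ref{pin}) --- the compatibility identities $(e_r\otimes 1_i)e_{r+1} = e_{r+1} = (1_i\otimes e_r)e_{r+1}$ expressing that $w_{[1,r]}$ is both a prefix and a suffix of $w_{[1,r+1]}$. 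The identification $P(i^r)\circ P(j)\circ P(i^s) = q^{\frac{1}{2}r(r-1)+\frac{1}{2}s(s-1)}H_\alpha e_{r,s}$ is the one already in the statement (it comes from $\Ind^{\beta+\gamma}_{\beta,\gamma}(H_\beta e\boxtimes H_\gamma e') = H_{\beta+\gamma}(e\otimes e')$). Writing $\tau_{r,s} = \tau_{r+1}\tau_r\cdots\tau_1 \in 1_{i^r j i^s}H_\alpha 1_{i^{r+1}ji^{s-1}}$, right multiplication by it carries $H_\alpha 1_{i^r j i^s}$ into $H_\alpha 1_{i^{r+1}ji^{s-1}}$; that it restricts to a map $H_\alpha e_{r,s}\to H_\alpha e_{r+1,s-1}$ is the identity $e_{r,s}\tau_{r,s}e_{r+1,s-1} = e_{r,s}\tau_{r,s}$, which one checks by sliding $e_{r+1,s-1}$ upwards through the crossings and invoking the compatibility identities. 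The claimed degree shifts are then forced, since $\tau_k 1_\bi$ has degree $-\alpha_{i_k}\cdot\alpha_{i_{k+1}}$.

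For the complex condition, $d_{r+1,s-1}\circ d_{r,s}$ is right multiplication by $\tau_{r,s}\tau_{r+1,s-1}$ on $H_\alpha e_{r,s}$, so it suffices to show $e_{r,s}\tau_{r,s}\tau_{r+1,s-1} = 0$. This is the diagram in which the two leftmost $i$-strands both migrate past the $j$-strand; applying the braid-type relation of $H_\alpha$ repeatedly one brings these two $i$-strands into a configuration containing $\tau_k^2 1_\bi$ with $i_k = i_{k+1} = i$, which vanishes since $q_{i,i}(u,v) = 0$. The correction terms produced by the braid-type relation along the way carry the prefactors $\delta_{i_k,i_{k+2}}$ and either vanish outright (when an $i$-strand is bracketed by an $i$-strand and a $j$-strand) or feed into lower instances of the same cancellation, so this step is routine if slightly fiddly; the cases $n = 1$ (where $\tau_1$ is an isomorphism because $q_{i,j} = 1$) and $n = 2$ (where $\tau_1\tau_2\tau_1 1_{iij} = \tau_2\tau_1\tau_2 1_{iij}$ and then $\tau_2^2 1_{jii} = 0$) are instructive warm-ups.

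It remains to prove exactness. As a sanity check, a character computation with the Shuffle Lemma (Corollary~\ref{shuffle}) shows that the alternating sum of the characters of the terms is $(1-q^2)^{-n}$ times the image of the quantum Serre relation $\sum_{r+s=n}(-1)^r\theta_i^{(r)}\theta_j\theta_i^{(s)} = 0$ in the quantum shuffle algebra, hence zero --- so the Euler characteristic is right, but this alone is not exactness. To upgrade it, use that the sequence is a bounded complex of finitely generated projective $H_\alpha$-modules; such a complex is exact if and only if $\Hom_{H_\alpha}(-, L)$ sends it to an exact complex of graded vector spaces for every irreducible $L$ (pass to the minimal form of the complex, using that a nonzero finitely generated projective has nonzero $\Hom$ to some irreducible, cf. Lemma~\ref{abs}). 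Under $\Hom_{H_\alpha}(H_\alpha e_{r,s}, L) \cong e_{r,s}L$ the induced differential $e_{r+1,s-1}L\to e_{r,s}L$ becomes $v \mapsto e_{r,s}\tau_{r,s}v$, so one is reduced to showing that for every irreducible $L$ the complex
$$
0 \longrightarrow e_{n,0}L \longrightarrow \cdots \longrightarrow e_{r+1,s-1}L \longrightarrow e_{r,s}L \longrightarrow \cdots \longrightarrow e_{0,n}L \longrightarrow 0
$$
is exact. Here $e_{r,s}L$ is analysed via the nil Hecke structure: restricting $L$ along $H_{r\alpha_i}\otimes H_{\alpha_j}\otimes H_{s\alpha_i}\hookrightarrow H_\alpha$ makes $1_{i^r j i^s}L$ a finite-dimensional module over $\NH_r\otimes\K[x_{r+1}]\otimes\NH_s$, which is Morita equivalent to $\Sym_r\otimes\K[x_{r+1}]\otimes\Sym_s$, and $e_{r,s}L$ is the corresponding module over this polynomial ring; one then checks that the reduced differentials are, up to invertible scalars, multiplications by factors of $q_{i,j}$ evaluated at successive pairs of variables, which identifies the reduced complex with a Koszul complex on a regular sequence, hence acyclic. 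The main obstacle is precisely this identification --- determining which polynomial $q_{i,j}(x_k,x_l)$ occurs in each reduced differential, and why the resulting sequence is regular on $e_{r,s}L$ for every irreducible $L$ --- and, failing a clean Koszul description, one may instead argue directly from the Basis Theorem (Theorem~\ref{qhbasis}), expressing the differentials in the bases $\{\tau_w 1_\bi\}$ that present each $1_\bj H_\alpha 1_\bi$ as a free module over the polynomial subalgebra, and checking that the resulting block-triangular matrices have exactly the ranks forced by the Euler characteristic computation.
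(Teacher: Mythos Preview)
Your treatment of well-definedness and of the complex condition is essentially correct, though the paper's argument for $e_{r,s}\tau_{r,s}\tau_{r+1,s-1}=0$ is cleaner than yours: rather than chasing correction terms through repeated braid moves, one simply observes diagrammatically that the product $\tau_{r,s}\tau_{r+1,s-1}$ can be rewritten on the nose as $\tau_{r+2}\pi$ for some $\pi$, and then $e_{r,s}\tau_{r+2}=0$ directly from the definition (\ref{en}) of the idempotent (since $\tau_{w_{[r+2,n+1]}}\tau_{r+2}=0$).

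Where your proposal diverges substantially is in the exactness step, and here there is a genuine gap. Your plan is to test against all irreducibles and then identify the resulting complex of word spaces with a Koszul complex, but this identification is never made precise, and as you yourself flag, it is not clear it can be: the map $e_{r+1,s-1}L\to e_{r,s}L$ goes between modules over \emph{different} polynomial rings $\Sym_{r+1}\otimes\K[x]\otimes\Sym_{s-1}$ and $\Sym_r\otimes\K[x]\otimes\Sym_s$, so there is no single ring over which to set up a Koszul complex, and ``multiplication by a factor of $q_{i,j}$'' does not describe the differential in any evident way. Your fallback via block-triangular rank computations from the Basis Theorem is not carried out either, and would be substantial work.

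The paper bypasses all of this with a one-line argument: the complex is null-homotopic. One writes down an explicit contracting homotopy $d_{r,s}':q^{\frac{1}{2}(r+1)r+\frac{1}{2}(s-1)(s-2)}H_\alpha e_{r+1,s-1}\to q^{\frac{1}{2}r(r-1)+\frac{1}{2}s(s-1)}H_\alpha e_{r,s}$ given by right multiplication by the ``mirror'' element $\tau_{r,s}':=(-1)^{r+m_{j,i}}\tau_n\tau_{n-1}\cdots\tau_{r+1}\,1_{i^r j i^s}$ (i.e.\ the rightmost $i$-strand at the bottom migrates leftward across the $j$-strand, with the indicated sign), and then verifies the single identity
\[
d_{r,s}'\circ d_{r,s}+d_{r-1,s+1}\circ d_{r-1,s+1}'=\operatorname{id}
\]
by a direct diagrammatic computation. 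This is both shorter and stronger than what you propose: it gives exactness immediately, works uniformly over any ground field, and avoids any appeal to the classification of irreducibles or to Koszul-type arguments.
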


\begin{proof}
It is a complex because $e_{r,s} \tau_{r,s} \tau_{r+1,s-1} = 0$. To see this
observe that the product
$\tau_{r,s}\tau_{r+1,s-1}$ can be rewritten as $\tau_{r+2}\pi$ for some $\pi$, and
$e_{r,s} \tau_{r+2} = 0$ by the definition (\ref{en}).
It is exact because it is homotopy equivalent to 0. To see this let
$
d_{r,s}': 
q^{\frac{1}{2}(r+1)r+\frac{1}{2}(s-1)(s-2)}
H_\alpha e_{r+1,s-1}
\rightarrow
q^{\frac{1}{2}r(r-1)+\frac{1}{2}s(s-1)} H_\alpha e_{r,s}
$
be defined by right multiplication by
$$
\begin{picture}(100,98)
\put(-73,45){$\tau_{r,s}':= (-1)^{r+m_{j,i}}$}
\put(8,15){$_i$}
\put(18,15){$_i$}
\put(28,15){$_i$}
\put(38,15){$_i$}
\put(48,15){$_j$}
\put(58,15){$_i$}
\put(68,15){$_i$}
\put(78,15){$_i$}
\put(88,15){$_i$}
\put(98,15){$_i$}
\put(8,75){$_i$}
\put(18,75){$_i$}
\put(28,75){$_i$}
\put(38,75){$_i$}
\put(48,75){$_i$}
\put(58,75){$_j$}
\put(68,75){$_i$}
\put(78,75){$_i$}
\put(88,75){$_i$}
\put(98,75){$_i$}
\put(7,10){$\underbrace{\hspace{12.5mm}}_{{r}}$}
\put(58 ,10){$\underbrace{\hspace{16mm}}_{{s}}$}
\put(7,80){$\overbrace{\hspace{16mm}}^{r+1}$}
\put(68 ,80){$\overbrace{\hspace{12.5mm}}^{s-1}$}
\put(10,20){\line(0,1){50}}
\put(20,20){\line(0,1){50}}
\put(30,20){\line(0,1){50}}
\put(40,20){\line(0,1){50}}
\put(100,20){\line(-1,1){50}}
\put(60,70){\line(-1,-5){10}}
\put(70,70){\line(-1,-5){10}}
\put(80,70){\line(-1,-5){10}}
\put(90,70){\line(-1,-5){10}}
\put(100,70){\line(-1,-5){10}}
\end{picture}
$$
Then check miraculously that $d_{r,s}' \circ d_{r,s} + d_{r-1,s+1}\circ d_{r-1,s+1}'= \operatorname{id}$.
\end{proof}

Hence the homomorphism $\hat\gamma$ is well-defined.
Moreover both $\f$ and $\Q(q)\otimes_{\Z[q,q^{-1}]} [\Proj(H)]$ are
twisted bialgebras in the same sense, $\hat\gamma$ respects the $Q^+$-gradings,
and it is easy to see that the analogue of (\ref{twistedco}) 
also holds in $[\Proj(H)]$. 
So $\hat\gamma$ is actually a homomorphism of twisted bialgebras.
We've already observed that it sends $\theta_i^{(r)}$
to $[P(i^n)] \in [\Proj(H)] \subset \Q(q) \otimes_{\Z[q,q^{-1}]} [\Proj(H)]$.
So $\hat \gamma$ restricts to a well-defined homomorphism
$\gamma$ as in the statement of Theorem~\ref{gamma}.

It remains to show that $\gamma$ is an isomorphism.
To establish its surjectivity, we dualize and exploit Corollary~\ref{chinjc}.
In more detail, Lusztig showed that $\f$ possesses a unique non-degenerate
symmetric bilinear form $(\cdot,\cdot)$ such that
$$
(1,1) = 1, 
\qquad
(\theta_i, \theta_j) = \frac{\delta_{i,j}}{1-q^{2}},
\qquad
(ab, c) = (a \otimes b, r(c))
$$
for all $i,j \in I$ and $a,b,c \in \f$; on the right hand side of the
last equation  $(\cdot,\cdot)$ is the product form $(a \otimes b, c \otimes d) :=
 (a,c) (b,d)$ 
on $\f \otimes \f$.
Let $\f_{\Z[q,q^{-1}]}^*$ be 
the dual of $\f_{\Z[q,q^{-1}]}$ with respect to the form $(\cdot,\cdot)$, i.e.
$$
\f_{\Z[q,q^{-1}]}^* := \left\{y \in \f\:\big|\:(x,y)\in\Z[q,q^{-1}]\text{ for all }x \in
  \f_{\Z[q,q^{-1}]}\right\}.
$$
It is 
another $\Z[q,q^{-1}]$-form for $\f$, that is, it is a twisted bialgebra such that
$\f = \Q(q)\otimes_{\Z[q,q^{-1}]} \f_{\Z[q,q^{-1}]}^*$.
Taking the dual map to $\gamma$ with respect to Lusztig's pairing between
$\f_{\Z[q,q^{-1}]}$ and $\f_{\Z[q,q^{-1}]}^*$ and the pairing
(\ref{pairing}) between $[\Proj(H)]$ and $[\Rep(H)]$ gives a twisted bialgebra homomorphism
$$
\gamma^*:[\Rep(H)] \rightarrow \f_{\Z[q,q^{-1}]}^*.
$$
For $x \in \f_{\Z[q,q^{-1}]}^*$, we let
$\chi(x) := \sum_{\bi\in\W} (\theta_\bi, x) \bi$,
thus defining an injective map
$\chi:\f_{\Z[q,q^{-1}]}^* \hookrightarrow \Z[q,q^{-1}]\W$.
Then we claim that the following diagram commutes:
\begin{equation*}
\begin{picture}(100,38)
\put(0,30){$[\Rep(H)] \stackrel{\gamma^*}{\longrightarrow}
\f_{\Z[q,q^{-1}]}^*$}
\put(16,14){$\searrow$}
\put(7,15){$_{\Ch}$}
\put(67,15){$_{\chi}$}
\put(59,14){$\swarrow$}
\put(21,0){$\Z[q,q^{-1}]\W$}
\end{picture}
\end{equation*}
To see this, just observe for a finite dimensional $H$-module $V$ that
\begin{equation}\label{inup}
(\theta_\bi, \gamma^*[V])
= (\gamma(\theta_\bi),[V])
= (H 1_\bi,[V])
= \Dim \Hom_H(H 1_\bi, V)
= \Dim 1_\bi V,
\end{equation}
which is indeed the coefficient of $1_\bi$ in $\Ch V$.
The map $\Ch$ is injective by Corollary~\ref{chinjc}, hence $\gamma^*$ is injective, and $\gamma$ is surjective as required.

Finally we must show that $\gamma$ is injective. 
To see this we lift the pairing between $[\Proj(H)]$ and $[\Rep(H)]$ to define a 
bilinear form on $[\Proj(H)]$ such that 
$$
([P],[Q]) := \Dim \Hom_H(P^\#, Q).
$$
At first sight this takes values in the ring of formal Laurent series
in $q$.
However, the surjectivity established in the previous paragraph implies that $[\Proj(H)]$ is spanned by the classes $\{[H 1_\bi]\:|\:\bi \in \W\}$, and 
$([H 1_\bj],[H 1_\bi]) = \Dim 1_\bj H 1_\bi$, which lies in $\Q(q)$ by (\ref{iden}).
Thus our form takes values in $\Q(q)$ and, extending scalars, we get induced a symmetric $\Q(q)$-bilinear form
$(\cdot,\cdot)$ on $\Q(q)\otimes_{\Z[q,q^{-1}]} [\Proj(H)]$.
It remains to observe from the definition of Lusztig's form above that
\begin{equation}\label{ip}
(\theta_\bj,\theta_\bi) = \frac{1}{(1-q^2)^n}\sum_{\substack{w \in S_n \\ w(\bi)=\bj}}
q^{\deg(w;\bi)}
\end{equation}
for $\bi,\bj \in \W$ with $\bi$ of length $n$.
This agrees with the right hand side of
(\ref{iden}), hence 
$(\hat\gamma(x),\hat\gamma(y)) = (x,y)$ for all $x,y \in \f$.
Now if $\hat\gamma(x) = 0$ for some $x \in \f$ we deduce from this 
that $(x,y) = 0$ for all $y \in \f$, hence $x = 0$ by the non-degeneracy of Lusztig's form. This shows that $\hat\gamma$, hence $\gamma$, is injective.
This completes the proof of Theorem~\ref{gamma}.

In the above argument, we have shown not only that
$\f_{\Z[q,q^{-1}]} \cong [\Proj(H)]$ but also that
$[\Rep(H)] \cong \f_{\Z[q,q^{-1}]}^*$ (both as twisted bialgebras).
We have also identified Lusztig's form with our representation-theoretically defined form (\ref{pairing}).
There is one other useful identification to be made at this point.
Let $\b:\f \rightarrow \f$
be the anti-linear algebra automorphism
such that $\b(\theta_i) = \theta_i$ for all $i \in I$.
Also let 
$\b^*:\f \rightarrow \f$
be the adjoint anti-linear
map to $\b$ 
with respect to Lusztig's form, so $\b^*$ is
defined from 
$(x, \b^*(y)) = \overline{(\b(x), y)}$
for any $x, y \in \f$.
The maps $\b$ and $\b^*$ preserve $\f_{\Z[q,q^{-1}]}$ and $\f_{\Z[q,q^{-1}]}^*$, respectively.
The map $\b^*$ is not an algebra homomorphism; instead it has the property
\begin{equation}
\b^*(xy) = q^{\beta\cdot \gamma} \b^*(y) \b^*(x)
\end{equation}
for $x$ of weight $\beta$ and $y$ of weight $\gamma$.
It is obvious for any $\bi\in\W$ that $\b(\theta_\bi) = \theta_\bi$
and $H 1_\bi^\# = H 1_\bi$, which is all that is needed to prove that
the isomorphism $\gamma$ intertwines $\b$
with the anti-linear involution on $[\Proj(H)]$ induced by the duality $\#$.
Because 
$\circledast$ is adjoint to $\#$ thanks to (\ref{pairing}),
we deduce that
$\gamma^*$ intertwines $\b^*$
with the anti-linear involution on $[\Rep(H)]$ induced by the duality $\circledast$.

For the remainder of the article we will simply {\em identify}
$[\Proj(H)]$ with $\f_{\Z[q,q^{-1}]}$
and $[\Rep(H)]$ with $\f_{\Z[q,q^{-1}]}^*$
via the maps $\gamma$ and $\gamma^*$. 
In particular the bases $\mathbf{B}$ and $\mathbf{B}^*$ from (\ref{cb})--(\ref{dcb}) give
bases for $\f_{\Z[q,q^{-1}]}$ and $\f_{\Z[q,q^{-1}]}^*$, respectively.
Here is the next remarkable result.

\begin{Theorem}[Rouquier, Varagnolo-Vasserot]\label{canb}
Assume $\K$ is of characteristic 0.
Then $\mathbf{B}$ coincides with the Lusztig-Kashiwara canonical basis for $\f$, and $\mathbf{B}^*$ is
the dual canonical basis.
\end{Theorem}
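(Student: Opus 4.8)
The plan is to reduce the statement to geometry on both sides. On the algebraic side one uses Lusztig's realization of $\f_{\Z[q,q^{-1}]}$ and of its canonical basis $\mathbf{B}$ in terms of the Grothendieck group of a category of perverse sheaves on the moduli stacks of representations of the quiver; on the quiver Hecke side one uses the realization of $H_\alpha$ as a Yoneda $\operatorname{Ext}$-algebra of such sheaves, due to Varagnolo--Vasserot \cite{VV} and Rouquier \cite[Corollary 5.8]{R2}. (This is exactly why the theorem is special to symmetric Cartan matrices: it is for these that the quiver geometry is available.) The hypothesis $\operatorname{char}\K = 0$ enters because this geometry rests on the decomposition theorem and purity, in the spirit of \cite{CG}; moreover it is genuinely needed, since the indecomposable projectives are always bar-invariant (as $\gamma$ intertwines $\b$ with the involution induced by the duality $\#$, recorded just before the statement), so $\mathbf{B}$ is a bar-invariant basis in every characteristic, but in characteristic $p$ it is instead a $p$-analogue of the canonical basis.

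First I would recall Lusztig's picture. For $\alpha \in Q^+$ of height $n$ and coefficient vector $\nu = (\nu_i)_{i \in I}$, let $E_\nu$ be the variety of representations of the quiver on a fixed $I$-graded space of dimension $\nu$, with its action of $G_\nu = \prod_{i \in I} GL_{\nu_i}$. Each word $\bi \in \W_\alpha$ gives a proper map $\pi_\bi \colon \widetilde{F}_\bi \to E_\nu$ from a variety of flags of composition-series type, and $L_\bi := (\pi_\bi)_! \underline{\K}_{\widetilde F_\bi}[\dim \widetilde F_\bi]$ is a $G_\nu$-equivariant semisimple complex; let $\mathcal{Q}_\nu$ be the additive category it generates under shifts and summands, and let $\mathcal{P}_\nu$ index the simple perverse sheaves occurring in it. Lusztig proved that $\bigoplus_\alpha [\mathcal{Q}_\nu]$, with the shift $[1]$ acting as $q$ and with multiplication and comultiplication induced by convolution and restriction of sheaves, is isomorphic to $\f_{\Z[q,q^{-1}]}$ as a twisted bialgebra via $[L_i] \mapsto \theta_i$; under this isomorphism the classes $\{[b] \mid b \in \mathcal{P}_\nu\}$ are precisely the canonical basis of $\f$, and Lusztig's form corresponds to the geometric pairing defined via $\operatorname{Ext}$-groups between the underlying complexes.

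Next I would invoke the theorem of Varagnolo--Vasserot and Rouquier: there is an isomorphism of graded algebras $H_\alpha \cong \operatorname{Ext}^\bullet_{G_\nu}(\mathbf{L}_\alpha, \mathbf{L}_\alpha)^{\mathrm{op}}$, where $\mathbf{L}_\alpha := \bigoplus_{\bi \in \W_\alpha} L_\bi$ (equivalently, as in the Notes above, $H_\alpha$ is the $\operatorname{Ext}$-algebra of a direct sum of the degree-shifted simple perverse sheaves $b \in \mathcal{P}_\nu$ with appropriate graded multiplicities); matching the generators $1_\bi, x_k, \tau_k$ with explicit equivariant cohomology classes is the substance of that computation. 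Because $\operatorname{char}\K = 0$, the summands of $\mathbf{L}_\alpha$ are genuinely simple perverse sheaves, with endomorphism ring $\K$ concentrated in degree $0$; a standard argument about $\operatorname{Ext}$-algebras of semisimple complexes then shows that the finite-dimensional graded $H_\alpha$-modules have simple objects $\{L(b) \mid b \in \mathcal{P}_\nu\}$ and that, under the equivalence furnished by $\operatorname{RHom}(\mathbf{L}_\alpha, -)$, the projective cover $P(b)$ of $L(b)$ corresponds to $b$ itself. Passing to Grothendieck groups gives an isomorphism $[\Proj(H_\alpha)] \stackrel{\sim}{\rightarrow} [\mathcal{Q}_\nu]$ carrying $[P(b)]$ to $[b]$, hence carrying the basis $\mathbf{B}$ of (\ref{cb}) onto $\{[b] \mid b \in \mathcal{P}_\nu\}$.

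Finally I would check compatibility with $\gamma$. The composite $\f_{\Z[q,q^{-1}]} \stackrel{\gamma}{\rightarrow} [\Proj(H)] \stackrel{\sim}{\rightarrow} \bigoplus_\alpha [\mathcal{Q}_\nu]$ is a homomorphism of twisted bialgebras sending $\theta_i \mapsto [H_{\alpha_i}] \mapsto [L_i]$, so it coincides with Lusztig's isomorphism because $\f$ is generated by the $\theta_i$; therefore $\gamma(\mathbf{B})$ is the canonical basis. For the dual statement, recall from (\ref{pairing}) and the computation (\ref{ip}) in the excerpt that $\gamma$ and $\gamma^*$ intertwine the representation-theoretic pairing between $[\Proj(H)]$ and $[\Rep(H)]$ with the canonical pairing between $\f$ and $\f^*$, equivalently with Lusztig's form; since $\mathbf{B}^*$ is by definition the basis of $[\Rep(H)]$ dual to $\mathbf{B}$, and the dual canonical basis is dual to the canonical basis for Lusztig's form, it follows that $\mathbf{B}^*$ is the dual canonical basis. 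The main obstacle is the geometric input of the third paragraph: proving the graded algebra isomorphism $H_\alpha \cong \operatorname{Ext}^\bullet_{G_\nu}(\mathbf{L}_\alpha, \mathbf{L}_\alpha)^{\mathrm{op}}$ and extracting from it enough control on the module category — simple modules indexed by $\mathcal{P}_\nu$, projectives matching IC classes — to make the Grothendieck-group comparison work. This is exactly where purity, the decomposition theorem and the characteristic-zero hypothesis are indispensable, and the only known proofs of this ingredient are geometric.
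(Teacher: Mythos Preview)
The paper does not supply its own proof of this theorem: it is stated without argument and, in the Notes at the end of the section, attributed to \cite[Corollary~5.8]{R2} and \cite[Theorem~4.5]{VV}, with the remark that the proof depends on the geometric realization of quiver Hecke algebras (hence the restriction to symmetric Cartan matrices). Your outline is a faithful high-level sketch of exactly that geometric argument---Lusztig's perverse-sheaf construction of the canonical basis on one side, the Varagnolo--Vasserot/Rouquier identification of $H_\alpha$ with the equivariant $\operatorname{Ext}$-algebra of $\mathbf{L}_\alpha$ on the other, and the standard positivity/formality argument matching indecomposable projectives with simple perverse summands---so there is nothing further to compare.
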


\subsection*{Some examples}
Here we give a few rather special examples which illustrate the algebraic techniques (characters, shuffle products, etc...) that we have developed so far.
First suppose that the graph underlying the quiver is the Dynkin diagram
$1$---$2$ of type
A$_2$, and consider $H_\alpha$ for the highest root
$\alpha = \alpha_1+\alpha_2$.
By Frobenius reciprocity, any irreducible graded $H_\alpha$-module
must appear in the head of one of $L(1) \circ L(2)$ or $L(2) \circ
L(1)$. By the Shuffle Lemma, we have that
$$
\Ch (L(1) \circ L(2)) = 12 + q 21,
\qquad
\Ch (L(2) \circ L(1)) = 21 + q 12.
$$
Because of Corollary~\ref{shift} this means that there can only be two
irreducible graded $H_\alpha$-modules up to isomorphism and degree shift,
namely, the one-dimensional modules
$L(12)$ and $L(21)$ with characters $12$ and $21$, respectively.
So in this case, $H_\alpha$ is already a {\em basic algebra}. In
fact it is easy to see directly that $H_\alpha$ is 
isomorphic to $A \otimes \K[x]$,
where $A$ is the path algebra of the quiver
\begin{equation}\label{A}
\begin{picture}(10,10)
\put(4,0.4){$\bullet$}
\put(10,2){$\longrightarrow$}
\put(10,-2){$\longleftarrow$}
\put(27,0.4){$\bullet$}
\put(15.5,9){$_\tau$}
\put(15.5,-4){$_\tau$}
\end{picture}
\end{equation}
graded by path length
and $x$ is of degree $2$ (corresponding to the central element $x_1
1_{12} + x_2 1_{21} \in H_\alpha$).
We deduce from this that $H_\alpha$ has global dimension 2.

We pause briefly to discuss {\em homogeneous representations}.
So return for a moment to a general quiver Hecke algebra.
Let $\sim$ be the equivalence relation on $\W$ 
generated by permuting adjacent pairs of letters $ij$ in a word
whenever $\alpha_i \cdot \alpha_j =0$. 
Call a word $\bi \in \W$ a {\em homogeneous word}
if 
every $\bj = j_1 \cdots j_n \sim \bi$ 
satisfies the following conditions:
\begin{itemize}
\item
$j_k \neq j_{k+1}$ for each
$k=1,\dots,n-1$;
\item if $j_k = j_{k+2}$
for some $k = 1,\dots,n-2$ then
$\alpha_{j_k}\cdot \alpha_{j_{k+1}} \neq -1$.
\end{itemize}
In that case, by the relations, there is a well-defined $H$-module
$L(\bi)$ concentrated in
degree zero 
with basis $\{v_\bj\:|\:\bj \sim \bi\}$ 
such that
\begin{itemize}
\item
$1_\bk v_\bj = \delta_{\bj,\bk} v_\bj$;
\item
$x_k v_\bj = 0$;
\item
$\tau_k v_\bj = v_{t_k(\bj)}$ if $\alpha_{j_k}\cdot \alpha_{j_{k+1}} =
0$, $\tau_k v_\bj  = 0$ otherwise.
\end{itemize}
It is obvious that $L(\bi)$ is irreducible.
Moreover for two homogeneous words $\bi,\bj$, we have 
that $L(\bi) \cong L(\bj)$ if and only if $\bi \sim \bj$.

\begin{Theorem}[Kleshchev-Ram]\label{homo}
Let $\homog$ be a set of representatives for the $\sim$-equivalence
classes
of homogeneous words $\bi \in \W$.
Up to isomorphism, the modules $\{L(\bi)\:|\:\bi \in \homog\}$
give all irreducible $H$-modules that are concentrated in degree zero.
\end{Theorem}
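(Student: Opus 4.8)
The modules $L(\bi)$ attached to homogeneous words $\bi$ have, just above the statement, already been seen to be irreducible, concentrated in degree zero, and pairwise non-isomorphic as $\bi$ runs over $\homog$; so the real content is that \emph{every} irreducible $H$-module $L$ concentrated in degree zero is of this form. The plan is as follows. Since $L$ lives in degree $0$ only and each $x_k$ has degree $2$, every $x_k$ must act as zero on $L$. Fix a word $\bi$ with $1_\bi L\neq 0$, which lies in $\W_\alpha$ for some $\alpha\in Q^+$ of height $n$; the goal is to identify $L$ with $L(\bi)$.

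First I would analyse the $\tau_k$-action. For any $\bj$ with $1_\bj L\neq 0$ and any $k$ I claim $j_k\neq j_{k+1}$: if $j_k=j_{k+1}$ then $\tau_k 1_\bj$ has degree $-2$, so $\tau_k$ annihilates the degree-zero space $1_\bj L$; as the $x$'s also act as zero, the relation $(\tau_k x_k-x_{k+1}\tau_k)1_\bj=-1_\bj$ then kills all of $1_\bj L$, a contradiction. Given $j_k\neq j_{k+1}$, the degree of $\tau_k 1_\bj$ is $-\alpha_{j_k}\cdot\alpha_{j_{k+1}}\geq 0$, so two cases arise: if $\alpha_{j_k}\cdot\alpha_{j_{k+1}}=0$ then $q_{j_k,j_{k+1}}=1$, hence $\tau_k^2 1_\bj=1_\bj$ and $\tau_k$ restricts to an isomorphism $1_\bj L\xrightarrow{\ \sim\ }1_{t_k(\bj)}L$; if $\alpha_{j_k}\cdot\alpha_{j_{k+1}}<0$ then $\tau_k$ has strictly positive degree and so kills $1_\bj L$.

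Next I would pin down the word support of $L$. By irreducibility $L=H_\alpha(1_\bi L)$, and since the $x$'s act as zero the basis theorem gives $L=\sum_{w\in S_n}\tau_w(1_\bi L)$, so $1_\bj L=\sum_{w(\bi)=\bj}\tau_w(1_\bi L)$. Expanding $\tau_w$ along a reduced word and applying the previous step one crossing at a time shows $\tau_w(1_\bi L)=0$ unless every crossing is a \emph{free} one (adjacent letters $i,j$ with $\alpha_i\cdot\alpha_j=0$), in which case $w(\bi)$ lies in the $\sim$-class $[\bi]$; conversely each word $\sim\bi$ is reached from $\bi$ by such crossings, each of which is an isomorphism on the relevant word spaces. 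Hence $1_\bj L\neq 0$ exactly when $\bj\sim\bi$, and all these word spaces share a common dimension $d\geq 1$; in particular condition (i) of ``homogeneous word'' holds for $\bi$. For condition (ii), suppose some $\bj\sim\bi$ had $j_k=j_{k+2}=i$, $j_{k+1}=j$ with $\alpha_i\cdot\alpha_j=-1$; taking $0\neq v\in 1_\bj L$, both $\tau_k v$ and $\tau_{k+1}v$ vanish (positive-degree crossings between $i$ and $j$), so the left-hand side of the braid relation kills $v$, while its right-hand side acts on $v$ by the scalar obtained by setting all $x$'s to $0$ in $\bigl(q_{i,j}(x_k,x_{k+1})-q_{i,j}(x_{k+2},x_{k+1})\bigr)/(x_k-x_{k+2})$, which equals $\mp 1$ since $m_{i,j}+m_{j,i}=1$; this forces $v=0$, a contradiction. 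So $\bi$ is homogeneous and $L(\bi)$ is defined.

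Finally I would compare characters: since $1_\bj L$ has dimension $d$ for $\bj\sim\bi$ and is $0$ otherwise, $\ch L=d\sum_{\bj\sim\bi}\bj=d\,\ch L(\bi)$. As $L$ and $L(\bi)$ are both irreducible $H_\alpha$-modules, Theorem~\ref{chinj} forbids any nontrivial $\Z$-linear relation among their ungraded characters unless they are isomorphic up to a degree shift; both being concentrated in degree $0$, we get $L\cong L(\bi)$ (so in fact $d=1$), hence $L\cong L(\bj)$ for the representative $\bj\in\homog$ with $\bj\sim\bi$. The main obstacle is the third paragraph: controlling $\tau_w(1_\bi L)$ by inducting along reduced words to see precisely which word spaces survive, and verifying that the divided-difference scalar appearing on the right of the braid relation is genuinely nonzero in the $\alpha_i\cdot\alpha_j=-1$ case; once these are secured, the grading together with Theorem~\ref{chinj} forces the conclusion.
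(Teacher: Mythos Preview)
Your argument is correct and follows essentially the same route as the paper: you use the commutator relation to rule out adjacent equal letters, the braid relation (together with the explicit constant term $\pm 1$ of the divided-difference polynomial when $\alpha_i\cdot\alpha_j=-1$) to rule out the forbidden $j_k=j_{k+2}$ configuration, and then linear independence of characters to identify $L$ with an $L(\bi)$. The only cosmetic difference is that you insert an additional step showing the word support of $L$ is \emph{exactly} the single $\sim$-class $[\bi]$ (via $L=\sum_w\tau_w(1_\bi L)$ and the degree argument), whereas the paper simply observes that every word of $L$ is homogeneous with dimension constant on $\sim$-classes, writes $\Ch L=\sum_{\bi\in\homog}(\dim 1_\bi L)\,\Ch L(\bi)$, and lets Corollary~\ref{chinjc} pick out the unique summand; your extra step is correct but not needed.
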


\begin{proof}
Let $L$ be an irreducible $H$-module that is concentrated in degree
zero.
Let $\bi$ be a word of $L$, i.e. a word such that $1_\bi L \neq 0$.
We claim first that $\bi$ is homogeneous. 
To see this, note for $\bj \sim \bi$
that $\dim 1_\bj L = \dim 1_\bi L$
since $\tau_k^2 1_\bi = 1_\bi$ whenever $\alpha_{i_k}\cdot\alpha_{i_{k+1}} =
0$. Hence $\bj$ is also a word of $L$.
If $j_k = j_{k+1}$ for some $k$ we get a contradiction to
the relation $(\tau_k x_{k+1} - x_{k} \tau_k) 1_\bj = 1_\bj$, since
the left hand side has to act on $L$ as zero by degree considerations.
Similarly if $j_k = j_{k+2}$ and $\alpha_{j_k}\cdot \alpha_{j_{k+1}} = -1$ for
some $k$
we get a contradiction to the relation 
$(\tau_{k+1} \tau_k \tau_{k+1} - \tau_k \tau_{k+1} \tau_k)1_\bj
= \pm 1_\bj$.
To complete the proof of the theorem, it
remains to observe that
$$
\Ch L = \sum_{\bi \in \homog} (\dim 1_\bi L) \Ch L(\bi)
$$
Hence $L \cong L(\bi)$ for some $\bi \in \homog$ by Corollary~\ref{chinjc}.
\end{proof}

Now back to examples. 
We next suppose that the graph underlying our quiver is the
A$_3$ Dynkin diagram
$1$---$2$---$3$, and 
take $\alpha = \alpha_1+\alpha_2+\alpha_3$. 
Because $\alpha$ is multiplicity-free, all words $\bi \in \W_\alpha$
are homogeneous, hence all irreducible graded
$H_\alpha$-modules are homogeneous representations.
Here are all of the skew-hooks with three boxes:
\begin{equation}
\begin{picture}(30,10)
\put(0,0){\line(1,0){30}}
\put(0,10){\line(1,0){30}}
\put(0,0){\line(0,1){10}}
\put(10,0){\line(0,1){10}}
\put(20,0){\line(0,1){10}}
\put(30,0){\line(0,1){10}}
\put(2.5,2){1}
\put(12.5,2){2}
\put(22.5,2){3}
\end{picture}
\qquad\:\:
\begin{picture}(20,20)
\put(0,-5){\line(1,0){10}}
\put(0,5){\line(1,0){20}}
\put(0,15){\line(1,0){20}}
\put(20,5){\line(0,1){10}}
\put(0,-5){\line(0,1){20}}
\put(10,-5){\line(0,1){20}}
\put(2.5,-3){1}
\put(2.5,7){2}
\put(12.5,7){3}
\end{picture}
\qquad\:\:
\begin{picture}(20,20)
\put(20,15){\line(-1,0){10}}
\put(20,5){\line(-1,0){20}}
\put(20,-5){\line(-1,0){20}}
\put(0,5){\line(0,-1){10}}
\put(20,15){\line(0,-1){20}}
\put(10,15){\line(0,-1){20}}
\put(2.5,-3){1}
\put(12.5,-3){2}
\put(12.5,7){3}
\end{picture}
\qquad\quad
\begin{picture}(10,20)
\put(0,-10){\line(0,1){30}}
\put(10,-10){\line(0,1){30}}
\put(0,-10){\line(1,0){10}}
\put(0,0){\line(1,0){10}}
\put(0,10){\line(1,0){10}}
\put(0,20){\line(1,0){10}}
\put(2.5,-8){1}
\put(2.5,2){2}
\put(2.5,12){3}
\end{picture}\label{skew}
\end{equation}

\vspace{2mm}

\noindent
We have filled in the boxes with their {\em contents} 1 2 3 in order
from southwest to northeast.
Reading contents along rows starting from the top row, we obtain
a distinguished set $\{123, 231, 312, 321\}$ of 
representatives for the $\sim$-equivalence classes of (homogeneous) words in
$\W_\alpha$.
The corresponding irreducible representations $L(123), L(231), L(312)$
and $L(321)$ have characters $123$, $231+213$, $312+132$ and $321$,
respectively.
They give all of the irreducible graded $H_\alpha$-modules up to
degree shift.
Their projective covers
$P(123), P(231), P(312)$ and $P(321)$ can be obtained explicitly
as the left ideals generated by the idempotents $1_{123}, 1_{231},
1_{312}$ and $1_{321}$, respectively.
This is so explicit that one can
then compute the endomorphism algebra of the resulting
minimal projective generator
$P(123)\oplus P(231)\oplus P(312)\oplus P(321)$
directly, to see that it is isomorphic to the tensor product $A \otimes A \otimes \K[x]$
where $A$ is the path algebra of the quiver (\ref{A}) as above.
This is a graded algebra of global dimension 3, and $H_\alpha$ is graded
Morita equivalent to it, so $H_\alpha$ has global dimension 3 too.

\begin{Theorem}[Brundan-Kleshchev]\label{unpub}
Suppose the graph underlying the quiver is the Dynkin diagram
$\operatorname{A}_n$ and that $\alpha = \alpha_1+\cdots+\alpha_n$ is the
highest root.
Then all irreducible graded $H_\alpha$-modules are homogeneous and are 
parametrized naturally by the skew-hooks with $n$ boxes as in the example above.
Moreover $H_\alpha$ is graded Morita equivalent
to $A^{\otimes (n-1)} \otimes \K[x]$, which is of global dimension $n$.
\end{Theorem}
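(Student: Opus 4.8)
The plan is to reduce the theorem to an explicit identification of a basic algebra Morita equivalent to $H_\alpha$, organised as an induction on $n$ in the spirit of the $\operatorname{A}_3$ computation above. Since $\alpha=\alpha_1+\cdots+\alpha_n$ is multiplicity-free, every word $\bi\in\W_\alpha$ is a permutation of $1\,2\cdots n$; in particular no letter is repeated, so both conditions defining a homogeneous word are vacuous and every $\bi\in\W_\alpha$ is homogeneous, so the modules $L(\bi)$ are defined. Under the relation $\sim$ the permutations of $\{1,\dots,n\}$ fall into $2^{n-1}$ classes, one for each subset of $\{1,\dots,n-1\}$, equivalently one for each ribbon (skew-hook) with $n$ boxes, its reading word being obtained by listing the contents $1,\dots,n$ along the rows from the top; this singles out a distinguished set $\homog_\alpha\subseteq\W_\alpha$ of class representatives indexed by skew-hooks. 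By Theorem~\ref{homo} the modules $L(\bi)$, $\bi\in\homog_\alpha$, are exactly the irreducible $H_\alpha$-modules concentrated in degree $0$. Moreover, as $\alpha$ is multiplicity-free no generator $\tau_k 1_\bi$ has negative degree (its degree is $-\alpha_{i_k}\cdot\alpha_{i_{k+1}}\in\{0,1\}$, the letters $i_k,i_{k+1}$ being distinct in type $\operatorname{A}$), so $H_\alpha$ is non-negatively graded; since its irreducibles are finite-dimensional, each is concentrated in a single degree, hence equals $q^m L(\bi)$ for a unique $\bi\in\homog_\alpha$ and $m\in\Z$. This proves the first assertion and the parametrization by skew-hooks.

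The heart of the proof is to identify the basic algebra of $H_\alpha$ with $A^{\otimes(n-1)}\otimes\K[x]$. Set $1_{\homog_\alpha}:=\sum_{\bi\in\homog_\alpha}1_\bi$; this is a full idempotent, because whenever $\alpha_{i_k}\cdot\alpha_{i_{k+1}}=0$ the relation $\tau_k^2 1_\bi=1_\bi$ gives $1_{t_k(\bi)}=\tau_k 1_\bi\tau_k\in H_\alpha 1_\bi H_\alpha$, so iterating shows $1_\bj\in H_\alpha 1_{\homog_\alpha}H_\alpha$ for every $\bj\in\W_\alpha$ and hence $H_\alpha 1_{\homog_\alpha}H_\alpha=H_\alpha$. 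Thus $H_\alpha$ is graded Morita equivalent to $E:=1_{\homog_\alpha}H_\alpha 1_{\homog_\alpha}=\bigoplus_{\bi,\bj\in\homog_\alpha}1_\bj H_\alpha 1_\bi$, and we must show $E\cong A^{\otimes(n-1)}\otimes\K[x]$. I would induct on $n$ via the $\operatorname{A}_{n-1}$ sub-quiver on the vertices $\{1,\dots,n-1\}$. Put $\beta:=\alpha_1+\cdots+\alpha_{n-1}$, so $\alpha=\beta+\alpha_n$; a ribbon with $n$ boxes is obtained from a ribbon with $n-1$ boxes (on contents $1,\dots,n-1$) by attaching the box of content $n$ at the north-east end, either continuing the last step in the same direction or turning, which on reading words inserts the letter $n$ either immediately after the letter $n-1$ or at the very front. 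So each $\bj\in\homog_\beta$ yields a pair $\bi_R,\bi_U\in\homog_\alpha$, and the projectives $H_\alpha 1_{\bi_R}$, $H_\alpha 1_{\bi_U}$ are joined by a distinguished pair of degree-$1$ maps given by sliding the new $n$-string past the adjacent $(n-1)$-string; by the quiver Hecke relation $\tau_k^2 1_\bi=q_{n-1,n}(x_k,x_{k+1})1_\bi$ (equal to $\pm(x_k-x_{k+1})1_\bi$ in the simply-laced case) together with the nil-Hecke facts already proved, these maps --- ranging over $\bj\in\homog_\beta$ --- generate a fresh copy of the path algebra $A$ of the quiver~(\ref{A}) commuting with the endomorphisms coming from $E_\beta$. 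Together with the inductive isomorphism $E_\beta\cong A^{\otimes(n-2)}\otimes\K[x]$ this produces a surjective graded algebra homomorphism $A^{\otimes(n-1)}\otimes\K[x]\twoheadrightarrow E$, which one checks is an isomorphism by comparing graded dimensions: the left-hand side has graded dimension $2^{n-1}\big/\big((1-q)^{n-1}(1-q^2)\big)$, while that of $E$ is read off from the basis theorem using~(\ref{iden}).

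Granting the identification, the last assertion is formal: global dimension is a Morita invariant, the path algebra $A$ of~(\ref{A}) is hereditary (being the path algebra of a quiver with no relations) so $\operatorname{gldim}A=1$, and $\operatorname{gldim}\K[x]=1$, and global dimension is additive under $\otimes_\K$ over the field $\K$ (by the K\"unneth formula for $\operatorname{Tor}$), whence $\operatorname{gldim}H_\alpha=\operatorname{gldim}\big(A^{\otimes(n-1)}\otimes\K[x]\big)=(n-1)+1=n$. The one genuinely delicate point is the inductive identification of $E$: one has to pin down the correct crossing elements inside $1_{\bi_U}H_\alpha 1_{\bi_R}$, check the non-obvious commutativity of the crossings attached to different joints of the ribbon (so that the $n-1$ copies of $A$ really tensor together), and --- hardest of all --- prove that this presentation is complete, i.e.\ that no further relations occur because it already accounts for the full graded dimension of $E$ given by~(\ref{iden}). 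Everything else is bookkeeping with ribbons and the nil-Hecke calculus developed earlier.
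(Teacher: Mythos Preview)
The paper does not actually prove this theorem: in the Notes at the end of that section it is declared ``unpublished but makes a good exercise.'' So there is no proof to compare against; I can only assess your outline on its own merits.

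Your first paragraph is essentially complete and correct. One small point of justification: the reason a graded irreducible over a non-negatively graded algebra is concentrated in a single degree is not finite-dimensionality per se, but that its top graded piece is annihilated by $H_{>0}$ and is therefore a graded submodule, hence the whole thing. The Morita reduction is also set up correctly: your argument that $1_{\homog_\alpha}$ is full is fine, and each $1_\bi$ with $\bi\in\homog_\alpha$ is primitive because $\dim 1_\bi L(\bj)=\delta_{\bi,\bj}$ for $\bi,\bj\in\homog_\alpha$. The graded dimension check you propose really does go through: using (\ref{iden}) and the fact that there is a unique $w$ with $w(\bi)=\bj$ when $\alpha$ is multiplicity-free, one finds $\sum_{\bi,\bj\in\homog_\alpha}\Dim 1_\bj H_\alpha 1_\bi=(2+2q)^{n-1}/(1-q^2)^n$, matching $\Dim\bigl(A^{\otimes(n-1)}\otimes\K[x]\bigr)$. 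The global dimension paragraph is fine.

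Where the real work remains is exactly where you say it is, and your sketch of that step needs a couple of corrections. First, your two insertions of the letter $n$ do not in general produce words that differ by a single transposition: going from $\bi_R$ (with $n$ just after $n-1$) to $\bi_U$ (with $n$ at the front) moves $n$ past several letters, so the ``degree-$1$ map'' is a product $\tau_{k}\tau_{k-1}\cdots\tau_1$ in which only the $(n{-}1,n)$-crossing contributes degree $1$ and the others are invertible degree-$0$ crossings; this is harmless but should be said. Second, your representatives $\bi_R,\bi_U$ as written may fail to lie in $\homog_\alpha$ (for instance with $n=3$ and $\bj=21$ you get $\bi_R=213\notin\homog_\alpha$, though $213\sim 231$); so either adjust the definition of $\homog_\alpha$ to be compatible with the induction, or conjugate by the invertible degree-$0$ $\tau$'s to land on the chosen representatives. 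After these fixes, the substantive issues you flag --- that the $n-1$ copies of $A$ commute with one another and with the central $\K[x]$, and that the resulting homomorphism is surjective --- are genuine and require a careful calculation with the defining relations, but the dimension match then finishes the job.
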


In fact, as we'll discuss in more detail later on, all the quiver Hecke algebras whose underlying graph is a finite type Dynkin diagram
are of finite global dimension. We expect the converse of this statement holds too:
it should be the case that $H_\alpha$ has finite global dimension for all $\alpha \in Q^+$ if and only if the underlying graph is a finite type Dynkin diagram.

We end with one example of infinite global dimension: suppose the underlying graph is $0$=\!=$1$, that is, the affine Dynkin diagram A$_1^{(1)}$.
The words $01$ and $10$ are homogeneous, so we have the one-dimensional
homogeneous $H_\delta$-modules $L(01)$ and $L(10)$, where
$\delta = \alpha_0+\alpha_1$ is the smallest imaginary root.
Working in $\f = \Q(q) \otimes_{\Z[q,q^{-1}]} [\Rep(H)]$, we have that  
$\theta_i = [L(i)] / (1-q^2)$.
Hence
\begin{align*}
\theta_{01} &= [L(0)\circ L(1)] / (1-q^2)^2 =
([L(01)] + q^2 [L(10)]) / (1-q^2)^2,\\
\theta_{10} &= [L(1)\circ L(0)] / (1-q^2)^2 =
([L(10)] + q^2 [L(01)]) / (1-q^2)^2.
\end{align*}
We deduce that $[L(01)] =\frac{1-q^2}{1+q^2} (\theta_{01} - q^2 \theta_{10}).$
Using also (\ref{inup}), we can then compute the inner product
$$
([L(01)], [L(01)])=
\frac{1-q^2}{1+q^2} 
(\theta_{01} - q^2 \theta_{10}, [L(01)])
= \frac{1-q^2}{1+q^2} \notin \Z[q,q^{-1}].
$$
On the other hand, if $H_\delta$ has finite global dimension, we can find a finite projective resolution $P_n \rightarrow \cdots \rightarrow P_1 \rightarrow P_0 \rightarrow L(01) \rightarrow 0$,
to deduce that 
$$
([L(01)],[L(01)]) = \sum_{i=0}^n (-1)^i ([P_i],[L(01)]) \in \Z[q,q^{-1}].
$$
This contradiction establishes that $H_\delta$ has infinite global dimension.

\subsection*{Notes}
The main categorification theorem (Theorem~\ref{gamma}) is \cite[Theorem 1.1]{KL1},
and our exposition of the proof is based closely on the original account there.
The linear independence of characters (Theorem~\ref{chinj}) is \cite[Theorem 3.17]{KL1}; the proof given there is
essentially the same as the proof of the analogous result for degenerate affine Hecke algebras in Kleshchev's book \cite[Theorem 5.3.1]{Kbook} which in turn repeated an argument written down by Vazirani in her thesis based on the results of \cite{GV}; in the context of affine Hecke algebras 
this result goes back to Bernstein.
The second equality in (\ref{pairing}) is justified in \cite[Lemma 3.2]{KR2}.
The Mackey Theorem (Theorem~\ref{mackey}) is \cite[Proposition 2.18]{KL1}. 
The categorification of the Serre relations (Lemma~\ref{serrerelations})
was first worked out in a slightly weaker form in \cite[Corollary 7]{KL2}, and in the form described here in \cite[Lemma 3.13]{R1}.

The identification of the canonical and dual canonical bases (Theorem~\ref{canb}) is \cite[Corollary 5.8]{R2} or \cite[Theorem 4.5]{VV}.
The proof of this theorem depends on the geometric realization of quiver Hecke algebras, hence is valid only for the case of symmetric Cartan matrices.
If $\K$ is of positive characteristic then the bases 
$\mathbf{B}$ and
$\mathbf{B}^*$ arising from the quiver Hecke algebras 
are different in general from the canonical and dual canonical bases.
For a while there was a conjecture formulated by Kleshchev and Ram \cite[Conjecture 7.3]{KR2} asserting that they should be the same independent of characteristic at least in all finite ADE types, but this turned out to be false. Various counterexamples are explained by Williamson in \cite{W}; see also \cite[Example 2.16]{BKM}.
Note though that $\mathbf{B}^*$ is always a 
{\em perfect basis} in the sense of Berenstein-Kazhdan \cite{BK}.

The classification of homogeneous representations (Theorem~\ref{homo}) was worked out by Kleshchev and Ram in \cite[Theorem 3.4]{KR1}. The version proved here is a slight generalization since we include quivers with multiple edges.
In \cite[Theorem 3.10]{KR1}, 
Kleshchev and Ram go on to introduce a special class of
``minuscule'' homogeneous representations, showing that the dimensions of these representations are given by the Petersen-Proctor hook formula, 
thus revealing another unexpected connection to combinatorics.

Theorem~\ref{unpub} is unpublished but makes a good exercise!

\section{Finite type}

In the final section we restrict our attention to finite ADE types, i.e. we assume that the graph underlying the quiver is a finite ADE Dynkin diagram.

\subsection*{PBW and dual PBW bases}
Since we are now in finite ADE type, the underlying Kac-Moody algebra $\mathfrak{g}$ is actually a finite dimensional simple Lie algebra.
Let $W$ denote the (finite) Weyl group associated to $\mathfrak{g}$, which is the subgroup of $GL(P)$ generated by the {\em simple reflections} 
$\{s_i\:|\:i \in I\}$
defined from $s_i(\lambda) = \lambda - (\alpha_i \cdot \lambda) \alpha_i$.
Let $R := \bigcup_{i \in I} W(\alpha_i) \subset Q$ be the set of {\em roots}
and $R^+ := R \cap \bigoplus_{i \in I} \N \alpha_i$
be the {\em positive roots}.
We fix once and for all a reduced expression
$w_0 = s_{i_1} \cdots s_{i_N}$
for the longest word $w_0 \in W$ (so $N = |R^+|$).
There is a corresponding {\em convex order}
$\prec$ on $R^+$ defined from
$\alpha_{i_1} \prec s_{i_1}(\alpha_{i_2}) \prec\cdots \prec
s_{i_1}\cdots s_{i_{N-1}}(\alpha_{i_N})$.
By a {\em Kostant partition} of $\alpha \in Q^+$, we mean a sequence
$\lambda = (\lambda_1,\dots,\lambda_l)$ of positive roots
summing to $\alpha$
such that $\lambda_1 \succeq \cdots \succeq \lambda_l$ with respect to this fixed convex order.
Let $\KP(\alpha)$ denote the set of all Kostant partitions of $\alpha$
and $\KP := \bigcup_{\alpha \in Q^+} \KP(\alpha)$.

Associated to the reduced expression/convex order just fixed,
Lusztig has introduced a 
{\em PBW basis} for $\f$, indexed as it should be by $\KP$.
To construct this, Lusztig first defines
{\em root vectors} $\{r_\alpha\:|\:\alpha \in R^+\}$.
Then the {\em PBW monomial} associated to
$\lambda = (\lambda_1,\dots,\lambda_l) \in \KP$ is defined from
\begin{equation}\label{tu}
r_\lambda := r_{\lambda_1} \cdots r_{\lambda_l} / [\lambda]!,
\end{equation}
where $[\lambda]! := \prod_{\beta \in R^+} m_\beta(\lambda)$, and
$m_\beta(\lambda)$ denotes the multiplicity of $\beta$ in $\lambda$.
In Lusztig's approach, the definition of the root vector $r_\alpha$ 
depends on a certain action of the braid group $\langle T_i\:|\:i \in I\rangle$ associated to $W$ 
on the full quantized enveloping algebra $U_q(\mathfrak{g})$: fixing an embedding
$\f \hookrightarrow U_q(\mathfrak{g})$ one sets
$r_\alpha := T_{i_1} \cdots T_{i_{r-1}}(\theta_{i_r})$
if $\alpha = s_{i_1}\cdots s_{i_{r-1}}(\alpha_{i_r})$.
We skip the precise details here because there is also a more elementary recursive approach to the definition of the root vector $r_\alpha$,
well known in type A but only recently established in full generality in types D and E. 
To formulate this we need the notion of a {\em minimal pair}
for $\alpha \in R^+$: a pair $(\beta,\gamma)$ of positive roots 
such that $\beta\succ \gamma$,
$\beta+\gamma=\alpha$, and
there is no other pair $(\beta',\gamma')$ of positive roots with
$\beta'+\gamma' = \alpha$ and $\beta \succ \beta' \succ \alpha \succ \gamma' \succ \gamma$.

\begin{Lemma}\label{ral}
Let $\alpha \in R^+$. If $\alpha = \alpha_i$ for some $i \in I$ then $r_\alpha = \theta_i$.
If $\alpha$ is not simple then
$r_\alpha = r_\gamma r_\beta - q r_\beta r_\gamma$
for any minimal pair $(\beta,\gamma)$ for $\alpha$.
\end{Lemma}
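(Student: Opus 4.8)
The plan is to unwind Lusztig's definition $r_\alpha = T_{i_1}\cdots T_{i_{r-1}}(\theta_{i_r})$, where $\alpha = s_{i_1}\cdots s_{i_{r-1}}(\alpha_{i_r})$ in the fixed reduced expression for $w_0$, and to reduce the recursion to a computation inside a rank two root subsystem. The case $\alpha = \alpha_i$ is immediate: if $\alpha_i$ is the $r$-th root of the convex order, then $w := s_{i_1}\cdots s_{i_{r-1}}$ is the minimal-length element of $W$ with $w(\alpha_{i_r}) = \alpha_i$, and it is a standard property of Lusztig's braid symmetries that $T_w(\theta_{i_r})$ then lies in $\f$; since the weight space $\f_{\alpha_i}$ is one-dimensional this forces $T_w(\theta_{i_r}) = \theta_i$ (the scalar being $1$ by Lusztig's normalization).

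Now let $\alpha$ be non-simple and fix a minimal pair $(\beta,\gamma)$. Since $\beta$, $\gamma$ and $\alpha = \beta+\gamma$ are all roots of a simply-laced system we have $\beta\cdot\gamma = -1$, so $\Phi := (\Q\beta + \Q\gamma)\cap R$ is an $A_2$ subsystem with positive roots $\beta, \alpha, \gamma$, in which $\beta$ and $\gamma$ are the simple roots and, by convexity, $\prec$ restricts to the order $\beta \succ \alpha \succ \gamma$. Relative to this order, the root vectors computed \emph{inside} $\Phi$ are $r^\Phi_\beta = \theta_\beta$, $r^\Phi_\gamma = \theta_\gamma$ and $r^\Phi_\alpha = T_\beta(\theta_\gamma)$, and the explicit rank two formula for Lusztig's operator $T_\beta$ gives at once $r^\Phi_\alpha = r^\Phi_\gamma r^\Phi_\beta - q\, r^\Phi_\beta r^\Phi_\gamma$. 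So the whole lemma reduces to identifying the ambient root vectors $r_\beta, r_\gamma, r_\alpha$ with the intrinsic ones $r^\Phi_\beta, r^\Phi_\gamma, r^\Phi_\alpha$.

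This last identification is the crux, and it is the reason the recursion in types $D$ and $E$ was established only recently: $\Phi$ is a \emph{reflection} subsystem, in general not a standard Levi, so Lusztig's braid operators do not visibly restrict to it. The route I would follow is (i) a combinatorial lemma on convex orders showing that every non-simple positive root admits a minimal pair and that the chosen reduced expression for $w_0$ can be modified, within its convex-order class, so that $\beta$, $\alpha$, $\gamma$ occur as three consecutive roots --- here one uses that $r_\alpha$ depends on the reduced expression only through the induced convex order (Lusztig; Levendorskii--Soibelman) --- followed by (ii) a short Levi-reduction to the corresponding rank two subword and then the rank two computation above. I expect step (i) to be the main obstacle, the relevant combinatorics being transparent in type $A$ but delicate in types $D$ and $E$.

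A cleaner alternative, made available by the theory of cuspidal modules, is to bypass Lusztig's braid operators altogether: under the isomorphism of Theorem~\ref{gamma} one realizes $r_\alpha$ as a degree-shift of the class of the cuspidal $H_\alpha$-module, so that the claimed recursion becomes a short exact sequence of $H$-modules relating the two induction products of the cuspidal modules for $\beta$ and $\gamma$, the necessary $\Hom$- and $\Ext$-vanishing being extracted from the Mackey filtration of Theorem~\ref{mackey}. This is the approach that fits most naturally with the homological viewpoint of the present section, and it works uniformly in all finite ADE types.
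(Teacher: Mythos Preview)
The paper's proof is exactly your second alternative: it reads in full ``This follows from Theorem~\ref{Ses} below.'' Passing to the Grothendieck group, the short exact sequence
$0 \to q\,\Delta(\beta)\circ\Delta(\gamma) \to \Delta(\gamma)\circ\Delta(\beta) \to \Delta(\alpha) \to 0$
of Theorem~\ref{Ses}, together with $[\Delta(\delta)] = r_\delta$ from Lemma~\ref{rootmod}, yields $r_\alpha = r_\gamma r_\beta - q\,r_\beta r_\gamma$ at once. One small correction: it is the \emph{root modules} $\Delta(\delta)$, not the cuspidal modules, that categorify $r_\delta$; the cuspidals satisfy $[L(\delta)] = r_\delta^* = (1-q^2)\,r_\delta$, which is not a degree shift of $r_\delta$, so the short exact sequence you want is the one for root modules.

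Your first approach, via Lusztig's $T_i$ and a rank-two reduction, is the classical route one would try without the categorical machinery, but step~(i) as you phrase it does not work. Within a fixed convex-order class (equivalently, a commutation class of reduced expressions) the total order on $R^+$ is rigid, so you cannot arrange for $\gamma,\alpha,\beta$ to become consecutive unless they already are. For instance, in type $A_3$ with the convex order
$\alpha_1 \prec \alpha_1{+}\alpha_2 \prec \alpha_2 \prec \alpha_1{+}\alpha_2{+}\alpha_3 \prec \alpha_2{+}\alpha_3 \prec \alpha_3$,
the pair $(\alpha_2{+}\alpha_3,\,\alpha_1)$ is minimal for the highest root, yet $\alpha_1$ sits three steps below it. The honest version of this argument instead invokes the Levendorskii--Soibelman straightening law, which expresses $r_\gamma r_\beta - q\, r_\beta r_\gamma$ as a $\Z[q,q^{-1}]$-combination of PBW monomials $r_\mu$ with every part of $\mu$ strictly between $\gamma$ and $\beta$; one then has to show, via a convexity lemma for root systems, that minimality of the pair forces $\mu = (\alpha)$, and finally pin down the coefficient. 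That convexity lemma is precisely the ``recently established'' ingredient the paper alludes to, and the categorical proof via Theorem~\ref{Ses} bypasses it entirely.
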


\begin{proof}
This follows from Theorem~\ref{Ses} below.
\end{proof}

Also define the {\em dual root vectors} $\{r_\alpha^*\:|\:\alpha \in R^+\}$
by setting $r_\alpha^* := (1-q^2)r_\alpha$. Then 
for $\lambda = (\lambda_l,\dots,\lambda_l) \in \KP$ define
the {\em dual PBW monomial}
\begin{equation}\label{rlambdadual}
r_\lambda^* := q^{s_\lambda} r_{\lambda_1}^* \cdots r_{\lambda_l}^*
\end{equation}
where $s_\lambda := \frac{1}{2} \sum_{\beta \in R^+} m_\beta(\lambda) (m_\beta(\lambda)-1)$.
Lusztig's fundamental result about PBW and dual PBW bases is as follows.

\begin{Theorem}[Lusztig]
The sets $\{r_\lambda\:|\:\lambda \in \KP\}$ and
$\{r_\lambda^*\:|\:\lambda \in \KP\}$ give bases
for $\f_{\Z[q,q^{-1}]}$ and $\f_{\Z[q,q^{-1}]}^*$, respectively.
Moreover these two bases are dual with respect to the pairing $(\cdot,\cdot)$.
\end{Theorem}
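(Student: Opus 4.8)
The plan is to reduce everything to the single orthonormality identity
$$
(r_\lambda, r_\mu^*) = \delta_{\lambda,\mu}\qquad\text{for all }\lambda,\mu\in\KP,
$$
once it is known that $\{r_\lambda\:|\:\lambda\in\KP\}$ is a $\Q(q)$-basis of $\f$. The latter is Lusztig's PBW theorem; I would either cite it or re-derive it from Lemma~\ref{ral}, using the straightening relations below to rewrite an arbitrary monomial in the $\theta_i$ as a $\Q(q)$-combination of the $r_\lambda$, together with the count $\#\KP(\alpha)=\dim_{\Q(q)}\f_\alpha$ coming from Kostant's partition function. Granting the identity, $\{r_\mu^*\}$ is forced to be the $\Q(q)$-basis of $\f$ dual to $\{r_\lambda\}$; all remaining assertions then follow once one shows $\{r_\lambda\}$ is a $\Z[q,q^{-1}]$-basis of $\f_{\Z[q,q^{-1}]}$, because $\f_{\Z[q,q^{-1}]}^*$ is \emph{by definition} the lattice dual to $\f_{\Z[q,q^{-1}]}$ under $(\cdot,\cdot)$, so its dual basis is automatically $\{r_\mu^*\}$.

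The two structural facts driving the orthonormality are: (i) the convexity (Levendorskii--Soibelman) relations — for $\beta\succ\gamma$ in $R^+$, the element $r_\beta r_\gamma-q^{-\beta\cdot\gamma}r_\gamma r_\beta$ is a $\Z[q,q^{-1}]$-combination of PBW monomials $r_\nu$ all of whose parts lie strictly between $\gamma$ and $\beta$ in the convex order; and (ii) the approximate primitivity of root vectors, $r(r_\alpha)-r_\alpha\otimes 1-1\otimes r_\alpha$ being a sum of terms $x\otimes y$ that are lower order with respect to the convex order in a suitable sense. Both follow from the recursion $r_\alpha=r_\gamma r_\beta-q r_\beta r_\gamma$ of Lemma~\ref{ral} and rank-two computations (only $A_1\times A_1$ and $A_2$ occur here, since we are in ADE type), or — more in the spirit of the rest of this section — from identifying the $r_\alpha$ with the classes of cuspidal modules for the quiver Hecke algebra (cf. Theorem~\ref{Ses} below). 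Given (i) and (ii), I would compute $(r_{\lambda_1}\cdots r_{\lambda_l},r_\mu^*)$ by iterating the adjunction $(ab,c)=(a\otimes b,r(c))$: expanding the coproducts and using that the form is orthogonal across weights, the convex order forces every surviving pairing to match the factors of $\lambda$ against those of $\mu$ in order, which can happen only if $\lambda=\mu$, in which case one is left with a product of factors $(r_{\alpha_i},r_{\alpha_i}^*)=(\theta_i,(1-q^2)\theta_i)=1$ together with combinatorial corrections coming from repeated roots. The normalizations $1/[\lambda]!$ in (\ref{tu}) and $q^{s_\lambda}$ in (\ref{rlambdadual}) are chosen precisely to cancel those corrections; checking this bookkeeping is the step needing real care.

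For the integral statements I would argue in two directions. First, each $r_\lambda$ lies in $\f_{\Z[q,q^{-1}]}$: for a single root this reduces by Lemma~\ref{ral} and induction along the convex order to $\alpha=\alpha_i$, and the divided-power normalization stays integral because powers of a root vector admit divided powers in $\f_{\Z[q,q^{-1}]}$ (Lusztig's integrality of root-vector divided powers, obtained by transporting the integrality of $\theta_i^{(m)}$ through the braid operators $T_i$). Conversely, the divided-power versions of the relations in (i) let one rewrite any product $\theta_{i_1}^{(r_1)}\cdots\theta_{i_k}^{(r_k)}$ as a $\Z[q,q^{-1}]$-combination of the $r_\lambda$, by induction on the weight and a dominance order on $\KP$; hence the $r_\lambda$ span $\f_{\Z[q,q^{-1}]}$ over $\Z[q,q^{-1}]$, and they form a basis since they are already $\Q(q)$-independent. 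The dual statement for $\{r_\mu^*\}$ and the mutual duality of the two bases then come for free from the orthonormality and the definition of $\f_{\Z[q,q^{-1}]}^*$.

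The main obstacle is exactly facts (i) and (ii): this is where the fixed convex order must be exploited, and a direct proof is a somewhat delicate induction through the rank-two quantum groups. In the present setting the clean route is to postpone them until the cuspidal modules $\Delta(\alpha)$ ($\alpha\in R^+$) for the quiver Hecke algebra are in hand: then $r_\alpha^*$ corresponds under $\gamma^*$ (Theorem~\ref{gamma}) to $[\Delta(\alpha)]$, the convexity relations become $\Hom$- and $\operatorname{Ext}$-vanishing between inductions of cuspidals taken in the wrong convex order, the approximate primitivity becomes a Mackey-filtration (Theorem~\ref{mackey}) computation, and the orthonormality becomes the statement that the standard and proper-standard modules give dual bases of $[\Proj(H)]$ and $[\Rep(H)]$ — a manifestation of the affine quasi-hereditary structure of $H_\alpha$ in finite type developed in the remainder of this section.
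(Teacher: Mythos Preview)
The paper does not prove this statement at all: it is stated as a classical result of Lusztig and, in the Notes at the end of the section, is referred to Lusztig's book \cite{Lubook}. So there is no ``paper's own proof'' to compare against; the theorem is treated as external input.

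Your outline is a reasonable sketch of one of the standard proofs (essentially Levendorskii--Soibelman plus Lusztig's integrality arguments via braid operators). A couple of comments. First, be careful with circularity: you invoke Lemma~\ref{ral}, but in this paper that lemma is \emph{proved} via Theorem~\ref{Ses}, which in turn sits downstream of the whole cuspidal/standard-module machinery. If you want a self-contained combinatorial proof of Lusztig's theorem you must derive the commutation relation $r_\alpha = r_\gamma r_\beta - q r_\beta r_\gamma$ directly from the braid-group definition $r_\alpha = T_{i_1}\cdots T_{i_{r-1}}(\theta_{i_r})$, not from Lemma~\ref{ral} as stated here. Second, a terminological slip: in this paper the cuspidal modules are the irreducible $L(\alpha)$ with $[L(\alpha)] = r_\alpha^*$, while $\Delta(\alpha)$ denotes the (infinite-dimensional) root module with $[\Delta(\alpha)] = r_\alpha$; your final paragraph conflates the two.

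Your closing suggestion---to deduce the orthonormality from the quiver Hecke side---is viable and is indeed implicit in the later development: Theorem~\ref{hp}(3) gives $\Dim\operatorname{Ext}^i_{H_\alpha}(\Delta(\lambda),\bar\nabla(\mu)) = \delta_{i,0}\delta_{\lambda,\mu}$, which under the identifications of Theorem~\ref{gamma} is exactly $(r_\lambda, r_\mu^*) = \delta_{\lambda,\mu}$. McNamara's construction of cuspidal modules uses only Theorem~\ref{gamma}, not Lusztig's PBW theorem, so this route is non-circular in principle; but the paper's exposition is not organized to make that independence explicit, so if you pursue this you would need to verify carefully that none of the intermediate steps (existence of cuspidals, Theorem~\ref{mac2}, the Ext-computations) silently appeal to the theorem you are trying to prove.
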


For a Kostant partition $\lambda = (\lambda_1,\dots,\lambda_l)$,
we set
$\lambda_k' := \lambda_{l+1-k}$ for $k=1,\dots,l$.
Then
introduce a partial order $\preceq$ on $\KP$ by declaring that 
$\lambda \prec \mu$ if and only if both of the following hold:
\begin{itemize}
\item 
$\lambda_1 = \mu_1,\dots,\lambda_{k-1} = \mu_{k-1}$ and
$\lambda_k \prec \mu_k$ for some $k$ such that $\lambda_k$ and $\mu_k$
both make sense;
\item
$\lambda_1' = \mu_1',\dots,\lambda_{k-1}' = \mu_{k-1}'$ and
$\lambda_k'\succ \mu_k'$ for some $k$ such that $\lambda_k'$ and $\mu_k'$
both make sense.
\end{itemize}
We note for $\alpha \in R^+$ that the unique smallest element of $\KP(\alpha)$
is the one-part Kostant partition $(\alpha)$,
and the next smallest elements 
are the minimal pairs $(\beta,\gamma)$ defined above.
The important point is that the bar involutions $\b$ and $\b^*$ act on the PBW and dual PBW bases in a triangular way
with respect to this order:
\begin{align}\label{tr1}
\b(r_\lambda) &= r_\lambda + (\text{a $\Z[q,q^{-1}]$-linear
  combination of $r_\mu$ for $\mu \succ \lambda$}),\\
\b^*(r_\lambda^*) &= r_\lambda^* + (\text{a $\Z[q,q^{-1}]$-linear
  combination of $r_\mu^*$ for $\mu \prec \lambda$}).\label{tr2}
\end{align}
Combined with Lusztig's lemma, this triangularity implies the existence of unique bases
$\{b_\lambda\:|\:\lambda \in \KP\}$ and $\{b_\lambda^*\:|\:\lambda \in
\KP\}$
for $\f_{\Z[q,q^{-1}]}$ and $\f_{\Z[q,q^{-1}]}^*$, respectively, such that
\begin{align}\label{l}
\b(b_\lambda) &= b_\lambda,
 &b_\lambda &= r_\lambda + \text{(a $q \Z[q]$-linear combination of
  $r_\mu$ for $\mu \succ \lambda$)},\\
\b^*(b^*_\lambda) &= b^*_\lambda,
&b_\lambda^*  &= r^*_\lambda
 + \text{(a $q \Z[q]$-linear combination of
  $r^*_\mu$ for $\mu \prec \lambda$).}\label{m}
\end{align}
The basis $\{b_\lambda\:|\:\lambda \in \KP\}$ is 
the Lusztig-Kashiwara canonical basis.
The following lemma shows that $\{b_\lambda^*\:|\:\lambda \in \KP\}$ is the dual canonical
basis.

\begin{Lemma}
For $\lambda,\mu \in \KP$ we have that $(b_\lambda, b_\mu^*) = \delta_{\lambda,\mu}$.
\end{Lemma}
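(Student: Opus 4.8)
The plan is to run Lusztig's standard biorthogonality argument: expand $b_\lambda$ and $b_\mu^*$ in the PBW and dual PBW bases, use that those two bases are dual with respect to $(\cdot,\cdot)$, and then play the $q\Z[q]$-triangularity in (\ref{l})--(\ref{m}) off against the bar-invariance of the resulting pairing.

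First I would record, from (\ref{l}) and (\ref{m}), the expansions
\[
b_\lambda = r_\lambda + \sum_{\kappa \succ \lambda} a_\kappa r_\kappa,
\qquad
b_\mu^* = r_\mu^* + \sum_{\nu \prec \mu} c_\nu r_\nu^*,
\]
with all $a_\kappa, c_\nu \in q\Z[q]$; set $a_\lambda = c_\mu = 1$ for uniformity. Since Lusztig's theorem gives $(r_\kappa, r_\nu^*) = \delta_{\kappa,\nu}$, pairing these expansions collapses to
\[
(b_\lambda, b_\mu^*) = \sum_{\lambda \preceq \kappa \preceq \mu} a_\kappa c_\kappa .
\]
If $\lambda \not\preceq \mu$ the index set is empty and the pairing is $0$; if $\lambda = \mu$, antisymmetry of the partial order $\preceq$ leaves only $\kappa = \lambda$, giving $a_\lambda c_\lambda = 1$. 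So it remains only to handle the case $\lambda \prec \mu$.

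For $\lambda \prec \mu$, I would inspect the sum term by term: $\kappa = \lambda$ contributes $1\cdot c_\lambda \in q\Z[q]$, $\kappa = \mu$ contributes $a_\mu \cdot 1 \in q\Z[q]$, and any strictly intermediate $\kappa$ contributes $a_\kappa c_\kappa \in q^2\Z[q]$; hence $(b_\lambda, b_\mu^*)$ lies in $q\Z[q]$ and in particular has no constant term. On the other hand, using $\b(b_\lambda) = b_\lambda$, $\b^*(b_\mu^*) = b_\mu^*$ and the defining adjointness $(x, \b^*(y)) = \overline{(\b(x), y)}$,
\[
(b_\lambda, b_\mu^*) = (b_\lambda, \b^*(b_\mu^*)) = \overline{(\b(b_\lambda), b_\mu^*)} = \overline{(b_\lambda, b_\mu^*)} ,
\]
so the pairing is bar-invariant. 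As $b_\lambda \in \f_{\Z[q,q^{-1}]}$ and $b_\mu^* \in \f_{\Z[q,q^{-1}]}^*$ its value lies in $\Z[q,q^{-1}]$, and a bar-invariant Laurent polynomial lying in $q\Z[q]$ must vanish. This yields $(b_\lambda, b_\mu^*) = 0$ for $\lambda \neq \mu$, completing the proof.

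The argument is mostly bookkeeping once the supports are lined up; the one point that needs care --- and the \emph{only} real obstacle --- is verifying that the PBW-expansion support of $b_\lambda$ and the dual-PBW-expansion support of $b_\mu^*$ meet exactly in the order interval $\{\kappa : \lambda \preceq \kappa \preceq \mu\}$, so that triangularity and bar-invariance can be combined in this single clean step. Everything else is immediate from results already established.
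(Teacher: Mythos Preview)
Your argument is correct and is essentially the same as the paper's: the paper also pairs the triangular expansions against the dual PBW basis to get $(b_\lambda,b_\mu^*)\in\delta_{\lambda,\mu}+q\Z[q]$, and then finishes with the same bar-invariance computation. The only difference is cosmetic --- the paper compresses your three-case analysis into the single assertion $(b_\lambda,b_\mu^*)\in\delta_{\lambda,\mu}+q\Z[q]$, while you spell out explicitly why the supports meet in the order interval and why each surviving term lies in $q\Z[q]$.
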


\begin{proof}
By (\ref{l})--(\ref{m}) and the duality of the PBW and dual PBW basis
vectors, we have that $(b_\lambda, b_\mu^*) \in \delta_{\lambda,\mu} +
q \Z[q]$. 
Now it remains to observe that it is bar-invariant:
$$
(b_\lambda,b_\mu^*)
= (\b(b_\lambda), b_\mu^*)
= \overline{(b_\lambda, \b^*(b_\mu^*))}
= \overline{(b_\lambda,b_\mu^*)}.
$$
\end{proof}

\subsection*{Proper standard modules}
If $\lambda = (\alpha)$ for $\alpha \in R^+$ then it is
minimal in $\KP(\alpha)$, hence by (\ref{m})
the dual canonical basis element $b_\lambda^*$ must simply be equal to
the dual root element $r_\alpha^*$.
For $\K$ of characteristic 0, we deduce immediately 
from this and the geometric Theorem~\ref{canb} 
that for each $\alpha \in R^+$ there is a 
(unique up to isomorphism) irreducible $H$-module $L(\alpha)$ such that $[L(\alpha)] = r_\alpha^*$.
These modules are known as {\em cuspidal modules}. 
McNamara has recently found
an elementary inductive proof of the existence of cuspidal modules
based just on Theorem~\ref{gamma},
which works for ground fields $\K$ of positive characteristic too. 
In fact in many cases, including all positive roots and all convex
orders in type A,  
the cuspidal module $L(\alpha)$ turns out to be
homogeneous, so can be constructed explicitly via
Theorem~\ref{homo}.

Now suppose we are given $\alpha \in Q^+$ and $\lambda = (\lambda_1,\dots,\lambda_l)\in \KP(\alpha)$.
Define
the {\em proper standard module}
\begin{equation}\label{psm}
\bar\Delta(\lambda) :=
q^{s_\lambda}
L(\lambda_1)\circ\cdots\circ L(\lambda_l).
\end{equation}
Comparing with the definition (\ref{rlambdadual}), we have that
that
$[\bar\Delta(\lambda)] = r_\lambda^*$,
i.e. proper standard modules categorify the dual PBW basis.
Then let
\begin{equation*}
L(\lambda) := \bar\Delta(\lambda) / \operatorname{rad} \bar\Delta(\lambda).
\end{equation*}
Now we can state the following classification of irreducible $H$-modules.

\begin{Theorem}[Kleshchev-Ram, Kato, McNamara]\label{mac2}
The modules $\{L(\lambda)\:|\:\lambda \in \KP\}$
give a complete set of pairwise inequivalent $\circledast$-self-dual
irreducible $H$-modules.
Moreover, all composition factors
of $\operatorname{rad} \bar\Delta(\lambda)$ 
are of the form $q^d L(\mu)$ for $\mu \prec \lambda$ and
$d \in \Z$. 
\end{Theorem}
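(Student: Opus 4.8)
The plan is to establish the theorem in two halves, following the now-standard ``cuspidal systems'' strategy. First I would show that each $\bar\Delta(\lambda)$ has a well-defined irreducible head, so that $L(\lambda)$ makes sense, and that the $L(\lambda)$ are pairwise non-isomorphic; then I would show they exhaust the $\circledast$-self-dual irreducibles and simultaneously pin down the composition factors of $\operatorname{rad}\bar\Delta(\lambda)$. The key input is the categorification identity $[\bar\Delta(\lambda)] = r_\lambda^*$ (immediate from the definition \eqref{psm}, the Shuffle Lemma / the identification $[\Rep(H)] \cong \f_{\Z[q,q^{-1}]}^*$, and the fact that $[L(\alpha)] = r_\alpha^*$ for cuspidal $L(\alpha)$), combined with the triangularity \eqref{m} of the dual canonical basis against the dual PBW basis with respect to the partial order $\preceq$ on $\KP$.

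For the first half, I would argue by induction on the partial order $\preceq$, or equivalently on $\operatorname{ht}(\alpha)$ together with $\preceq$ within a fixed $\alpha$. The crucial lemma is a ``bar-invariance'' statement: since $[\bar\Delta(\lambda)] = r_\lambda^*$ and $r_\lambda^* = b_\lambda^* + \sum_{\mu\prec\lambda} (q\Z[q]\text{-stuff})\, b_\mu^*$ by inverting \eqref{m}, the class of any $\circledast$-self-dual composition factor $L(\mu)$ of $\bar\Delta(\lambda)$ satisfies $[L(\mu)] = b_\mu^*$ (this uses that dual canonical basis elements are the unique $\b^*$-fixed elements with the given leading term, matched against the fact, from Lemma~\ref{shift}, that $\circledast$-self-dual irreducibles have $\circledast$-invariant, hence $\b^*$-invariant, classes). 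Positivity of graded composition multiplicities then forces every composition factor other than the head to be $q^d L(\mu)$ with $\mu \prec \lambda$ and $d \neq 0$ (or $d=0$ only for the head), and in particular the graded multiplicity of $L(\lambda)$ itself in $\bar\Delta(\lambda)$ is $1$ in degree $0$. This simultaneously gives that $\bar\Delta(\lambda)$ has irreducible head $L(\lambda)$ with $[L(\lambda)] = b_\lambda^*$, and the statement about $\operatorname{rad}\bar\Delta(\lambda)$. Distinctness of the $L(\lambda)$ is then immediate because the $b_\lambda^*$ are linearly independent (they form the dual canonical basis).

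For the exhaustion statement, suppose $L$ is any $\circledast$-self-dual irreducible $H$-module, say $1_\alpha L \neq 0$. By Frobenius reciprocity $L$ embeds in some induced product of cuspidal modules, and by the Shuffle Lemma together with the structure of the shuffle/quantum-group pairing one can arrange that $L$ occurs in $L(\lambda_1)\circ\cdots\circ L(\lambda_l)$ for a genuine Kostant partition $\lambda \in \KP(\alpha)$ — this is where the convexity of the order $\prec$ and Lemma~\ref{ral} (giving the recursive description of root vectors via minimal pairs) are used to show that non-decreasing sequences of roots suffice. Choosing $\lambda$ maximal with this property, one shows $L$ must be the head $L(\lambda)$ by comparing with the already-established composition series of $\bar\Delta(\lambda)$. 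Finally $\#$ of irreducibles equals $\#$ of dual canonical basis elements in each $\f_\alpha^*$, which equals $|\KP(\alpha)|$, so there is no room for anything else.

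The main obstacle will be the exhaustion step: showing that \emph{every} $\circledast$-self-dual irreducible arises as the head of some $\bar\Delta(\lambda)$ for an honest (sorted) Kostant partition, rather than merely as a constituent of some unsorted product $L(\gamma_1)\circ\cdots\circ L(\gamma_l)$ of cuspidals. Reordering the factors into convex order — and controlling how the character and the highest term in the $\preceq$-order change under such reorderings — is exactly the combinatorial heart of the Kleshchev–Ram/Kato/McNamara argument, and it relies essentially on convexity of $\prec$ and on the minimal-pair recursion of Lemma~\ref{ral}. Everything else (irreducible head, multiplicity-one, the radical filtration) is a fairly formal consequence of the dual-PBW/dual-canonical triangularity \eqref{m} once the categorification dictionary $[\bar\Delta(\lambda)] = r_\lambda^*$ is in hand.
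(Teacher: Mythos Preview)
Your proposal has a genuine gap in the first half, not just in the exhaustion step you flag. The inference ``$[L]$ is $\b^*$-invariant, hence $[L]=b_\mu^*$ for some $\mu$'' does not follow from the characterization (\ref{m}) alone: that characterization says $b_\mu^*$ is the unique $\b^*$-fixed element congruent to $r_\mu^*$ modulo the $q\Z[q]$-span of $\{r_\nu^*:\nu\prec\mu\}$, but for an arbitrary $\circledast$-self-dual irreducible $L$ you have no a priori control over the expansion of $[L]$ in the dual PBW basis---you do not know it has a single leading term with coefficient $1$, nor that the remaining coefficients lie in $q\Z[q]$. Asserting this is tantamount to assuming Theorem~\ref{canb}, which is proved geometrically and only in characteristic zero; in positive characteristic $\mathbf{B}^*$ need not equal the dual canonical basis at all (see the Notes to Section~3), so the triangularity (\ref{m}) gives no direct hold on composition multiplicities. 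Your argument as written is therefore either circular (you already label the composition factors by elements of $\KP$, which is what is to be proved) or relies on Theorem~\ref{canb} as a black box, which is much deeper than the statement at hand and unavailable in positive characteristic. A secondary gap: even granting that $L(\lambda)$ occurs with graded multiplicity $1$ in $\bar\Delta(\lambda)$, this alone does not show it sits at the top; you still need a representation-theoretic reason for irreducibility of the head.

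The approach the paper actually points to (McNamara's) is quite different and purely representation-theoretic: the engine is Lemma~\ref{maclem}, a constraint on $\Res^\alpha_{\beta,\gamma}L(\alpha)$ for cuspidal $L(\alpha)$, established by induction simultaneously with Theorem~\ref{mac2} itself. Via the Mackey filtration (Theorem~\ref{mackey}) this controls the restrictions of $\bar\Delta(\lambda)$, showing in particular that its $(\lambda_1,\dots,\lambda_l)$-restriction is irreducible up to a shift; Frobenius reciprocity then forces the head to be irreducible, and the same restriction analysis pins down the other composition factors as $q^dL(\mu)$ with $\mu\prec\lambda$. Exhaustion follows from the count $|\KP(\alpha)|=\dim_{\Q(q)}\f_\alpha$ via Theorem~\ref{gamma}. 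No identification with the dual canonical basis is needed, and the argument works uniformly in all characteristics; indeed the positivity you wanted to invoke (Corollary~\ref{ka}) is deduced \emph{from} Theorem~\ref{mac2}, not used to prove it.
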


Theorem~\ref{mac2} can be viewed as a vast generalization of Zelevinsky's
classification of irreducible representations of affine Hecke algebras
via ``multisegments.''
Zelevinsky's result can be interpreted as treating the case that the Dynkin diagram is
$1$---$2$---$\cdots$---$n$,
so the positive roots are
$\alpha_{i,j} := \alpha_i+\alpha_{i+1}+\cdots+\alpha_{j-1}$
for $1 \leq i \leq j \leq n$, and 
the convex order is
defined by $\alpha_{i,j} \prec \alpha_{k,l}$
if and only if $i < k$ or $i = k$ and $j < l$.
The cuspidal module $L(\alpha_{i,j})$ is the one-dimensional
homogeneous module corresponding to the homogeneous word
$[i,j] := i (i+1)\cdots j$. We call this a 
{\em segment}.
Then a Kostant partition $\lambda =
(\alpha_{i_1,j_1},\dots,\alpha_{i_r,j_r})$ is the same thing as a 
{\em multisegment}, i.e. a non-increasing sum of segments
$[i_1,j_1] +\cdots + [i_r,j_r]$. The
proper standard module
$\bar\Delta(\lambda)$ has character 
$q^{s_\lambda} [i_1,j_1]\circ \cdots [i_r,j_r]$ obtained by taking the
shuffle product of these segments. The irreducible heads of these
modules give all the irreducible $H$-modules up to isomorphism and
degree shift.
For example if $\alpha = \alpha_{1,3}$ then the irreducible graded
$H_\alpha$-modules
are indexed by
$\KP(\alpha) = \{(\alpha_{1,3}), (\alpha_{2,3},\alpha_1),
(\alpha_3,\alpha_{1,2}), (\alpha_3,\alpha_2,\alpha_1)\}$.
These modules are the homogeneous representations
parametrized by the skew-hooks from (\ref{skew}).
To translate from 
skew-hook to Kostant partition, 
record the contents in the rows of the skew-hook
from top to bottom to obtain the corresponding multisegment.

If $\K$ is of characteristic zero then we have that
$[\bar\Delta(\lambda)] = r_\lambda^*$ and $[L(\mu)] = b_\mu^*$
thanks to Theorem~\ref{canb}. Thus the coefficients 
$p_{\lambda,\mu}(q)$ defined from the equivalent expansions
$$
r_\lambda^* = \sum_{\mu \in \KP} p_{\lambda,\mu}(q) b_\mu^*,
\qquad
b_\mu = \sum_{\lambda \in \KP} p_{\lambda,\mu}(q) r_\lambda
$$
compute the graded
composition multiplicities $[\bar\Delta(\lambda):L(\mu)]$.
Thus Theorem~\ref{mac2} has the following application.

\begin{Corollary}\label{ka}
We have that 
$p_{\lambda,\mu}(q) = 0$ unless $\mu \preceq \lambda$,
$p_{\lambda,\lambda}(q) = 1$, 
and $p_{\lambda,\mu}(q) \in  q \N[q]$ if $\mu \prec \lambda$.
\end{Corollary}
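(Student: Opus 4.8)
The plan is to identify the $p_{\lambda,\mu}(q)$ with graded composition multiplicities of proper standard modules, and then combine the positivity of those multiplicities with the $q$-triangularity of the dual canonical basis with respect to the dual PBW basis.

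First I would work over a ground field $\K$ of characteristic $0$, as in the statement, so that Theorem~\ref{canb} applies and gives $[\bar\Delta(\lambda)] = r_\lambda^*$ and $[L(\mu)] = b_\mu^*$ in $\f_{\Z[q,q^{-1}]}^* = [\Rep(H)]$. By Theorem~\ref{mac2} the module $\bar\Delta(\lambda)$ has irreducible head $L(\lambda)$, occurring exactly once and in degree $0$, while every composition factor of $\operatorname{rad}\bar\Delta(\lambda)$ is of the form $q^d L(\mu)$ with $\mu \prec \lambda$ and $d \in \Z$. Writing $m_{\lambda,\mu}(q) \in \N[q,q^{-1}]$ for the graded composition multiplicity of $L(\mu)$ in $\bar\Delta(\lambda)$, this says $m_{\lambda,\lambda}(q) = 1$ and $m_{\lambda,\mu}(q) = 0$ unless $\mu \preceq \lambda$. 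Since $[\bar\Delta(\lambda)] = \sum_\mu m_{\lambda,\mu}(q)[L(\mu)]$ in the Grothendieck group, comparing with the defining expansion $r_\lambda^* = \sum_\mu p_{\lambda,\mu}(q) b_\mu^*$ and using that $\{b_\mu^*\}$ is a basis forces $p_{\lambda,\mu}(q) = m_{\lambda,\mu}(q)$. This already gives $p_{\lambda,\mu}(q) = 0$ unless $\mu \preceq \lambda$, $p_{\lambda,\lambda}(q) = 1$, and $p_{\lambda,\mu}(q) \in \N[q,q^{-1}]$ for all $\mu$.

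It then remains only to rule out a constant term and negative powers of $q$ when $\mu \prec \lambda$, so as to upgrade $\N[q,q^{-1}]$ to $q\N[q]$. For this I would use the triangularity (\ref{m}): inside the (finite-dimensional) weight component of $\f_{\Z[q,q^{-1}]}^*$ in degree $\alpha := \sum_i \lambda_i$, the matrix expressing the $b_\mu^*$ in terms of the $r_\nu^*$ has the form $I + C$, where $C$ is unitriangular with respect to $\preceq$ on the finite set $\KP(\alpha)$ — hence nilpotent — and has all entries in $q\Z[q]$. Then $(I+C)^{-1} = \sum_{k \ge 0}(-C)^k$ is a finite sum, and for $k \ge 1$ the entries of $C^k$ lie in $q^k\Z[q] \subseteq q\Z[q]$; so the off-diagonal entries of $(I+C)^{-1}$, which are exactly the $p_{\lambda,\mu}(q)$ with $\mu \prec \lambda$, lie in $q\Z[q]$.

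Finally, a Laurent polynomial that lies in both $\N[q,q^{-1}]$ and $q\Z[q]$ automatically lies in $q\N[q]$, and this finishes the proof. I expect the only real subtlety to be that neither ingredient suffices on its own: Theorem~\ref{mac2} delivers non-negativity of the multiplicities but a priori allows degree shifts $q^d$ with $d \le 0$ in $\operatorname{rad}\bar\Delta(\lambda)$, while (\ref{m}) pins those shifts down to $d \ge 1$ but only over $\Z$; the two must be fed into each other. Everything else — the matrix inversion and the bookkeeping with the partial order $\preceq$ — is routine.
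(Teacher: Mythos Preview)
Your proof is correct and follows essentially the same approach as the paper: the paper leaves the argument implicit, simply noting before the corollary that the $p_{\lambda,\mu}(q)$ are the graded composition multiplicities $[\bar\Delta(\lambda):L(\mu)]$ (whence positivity and, via Theorem~\ref{mac2}, the triangularity), and relying on the already-established characterization (\ref{m}) of the $b_\mu^*$ for the $q\Z[q]$ condition off the diagonal---exactly the two ingredients you combine. One small terminological slip: you call $C$ ``unitriangular,'' but you mean strictly triangular (zero diagonal); unitriangular matrices have $1$'s on the diagonal and are not nilpotent.
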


Finally we record a lemma which is
at the heart of McNamara's inductive proof of the existence of
cuspidal modules and his approach to Theorem~\ref{mac2}. 
We will apply this several times also in the next subsection.

\begin{Lemma}[``McNamara's Lemma'']\label{maclem}
Suppose we are given $\alpha \in R^+$
and $\beta,\gamma \in Q^+$ with $\beta+\gamma=\alpha$.
If $\Res^{\alpha}_{\beta,\gamma} L(\alpha) \neq 0$
then $\beta$ is a sum of positive roots $\preceq \alpha$ and $\gamma$ is a sum
of positive roots $\succeq \alpha$.
\end{Lemma}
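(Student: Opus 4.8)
The plan is to argue by strong induction on $\operatorname{ht}(\alpha)$, exploiting the characterization $[L(\alpha)] = r_\alpha^* = b_{(\alpha)}^*$ together with the recursive description of the dual root vector and the triangularity of the dual PBW/dual canonical bases. The key observation is that if $\Res^\alpha_{\beta,\gamma} L(\alpha) \neq 0$, then by Frobenius reciprocity $L(\alpha)$ embeds in $L(\beta')\circ L(\gamma')$ for some irreducible constituents of the restriction, and one wants to translate this into an inequality constraint on $\beta,\gamma$ relative to the convex order $\prec$. Concretely, I would first reduce to the case $\alpha$ not simple (the simple case being vacuous, since $\beta+\gamma=\alpha_i$ forces $\beta$ or $\gamma$ to be zero), fix a minimal pair $(\beta_0,\gamma_0)$ for $\alpha$, and use Lemma~\ref{ral}, which gives $r_\alpha = r_{\gamma_0} r_{\beta_0} - q\, r_{\beta_0} r_{\gamma_0}$.

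The heart of the argument is a ``convexity'' statement: for any way of writing $\alpha$ as a sum $\beta+\gamma$ in $Q^+$ with both summands nonzero, either $\beta$ is a nonnegative integer combination of positive roots all $\preceq\alpha$ and $\gamma$ of positive roots all $\succeq\alpha$, or else the product $[L(\beta')\circ L(\gamma')]$ — computed via the Shuffle Lemma (Corollary~\ref{shuffle}) and then expanded in the dual PBW basis using Theorem~\ref{mac2} (all composition factors of proper standard modules have lower-indexed Kostant partitions) — cannot contain $b_{(\alpha)}^* = r_\alpha^*$, because every Kostant partition appearing when one shuffles a Kostant partition of $\beta$ with one of $\gamma$ is strictly $\prec (\alpha)$ in the order on $\operatorname{KP}(\alpha)$ unless the convexity condition on $\beta,\gamma$ holds. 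I would make this precise by noting that the leftmost (largest) part of any Kostant partition appearing in $\lambda \cup \mu$ for $\lambda\in\operatorname{KP}(\beta)$, $\mu\in\operatorname{KP}(\gamma)$ is $\max_\prec(\lambda_1,\mu_1)$, and similarly the smallest part is $\min_\prec$ of the smallest parts; comparing with the order $\preceq$ on $\operatorname{KP}(\alpha)$ defined before Corollary~\ref{ka}, one sees that $(\alpha)$ — the unique $\preceq$-minimal element — can only be dominated by such a shuffle-type Kostant partition when $\beta$ decomposes into roots $\preceq\alpha$ and $\gamma$ into roots $\succeq\alpha$. Then $[L(\alpha)] = r_\alpha^*$ having nonzero $1_{\bi}$-coefficient only for words $\bi$ that are concatenations $\bi'\bi''$ with $\bi'\in\W_\beta$, $\bi''\in\W_\gamma$ (which is what $\Res^\alpha_{\beta,\gamma}L(\alpha)\neq 0$ says) forces exactly this.

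The main obstacle I anticipate is making the combinatorial comparison between the ``shuffle order'' on Kostant partitions of $\alpha$ (governing which $r_\mu^*$ appear in a product $r_\lambda^* r_\nu^*$ of lower-rank dual PBW monomials) and the order $\preceq$ from the text airtight in types D and E, where convex orders are less transparent than the A-type case recalled after Theorem~\ref{mac2}; in type A one can argue directly with multisegments, but in general one must lean on the abstract properties of convex orders — in particular that any $\alpha\in R^+$ with a decomposition $\beta+\gamma$ violating the convexity condition admits a refinement exhibiting a part strictly between $\beta_0$ and $\gamma_0$ in the minimal-pair sense, contradicting $\preceq$-minimality of $(\alpha)$. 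A cleaner route, which I would try first, is to avoid the global combinatorics entirely and instead run the induction through Theorem~\ref{mac2} itself: assume the result for all positive roots of smaller height, use the minimal pair to realize $L(\alpha)$ as the head of $L(\beta_0)\circ L(\gamma_0)$, apply the Mackey filtration (Theorem~\ref{mackey}) to compute $\Res^\alpha_{\beta,\gamma}$ of this induced module, and observe that each Mackey subquotient is an induction product of restrictions of $L(\beta_0)$ and $L(\gamma_0)$, to which the inductive hypothesis applies; the bookkeeping then shows any $\beta$ with $\Res^\alpha_{\beta,\gamma}\neq 0$ is forced to be a sum of roots $\preceq\alpha$ and $\gamma$ a sum of roots $\succeq\alpha$, using convexity of $\prec$ (if $\beta_1\preceq\beta_0$ and $\gamma_1\succeq\alpha$ then $\beta_1+\gamma_1$-type combinations stay on the correct side of $\alpha$).
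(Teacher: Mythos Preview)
Your preferred route---induct on height, write $L(\alpha)$ as the head of an induced module built from a minimal pair $(\beta_0,\gamma_0)$, and apply the Mackey filtration---has a genuine gap. First a minor point: $L(\alpha)$ is \emph{not} the head of $L(\beta_0)\circ L(\gamma_0)$; by Theorem~\ref{mac2} that head is $L((\beta_0,\gamma_0))$, a different irreducible. One can instead arrange $L(\alpha)$ as a quotient of $L(\gamma_0)\circ L(\beta_0)$ (or as a submodule of $L(\beta_0)\circ L(\gamma_0)$), so exactness of restriction still gives $\Res^\alpha_{\beta,\gamma}L(\alpha)\neq 0 \Rightarrow \Res^\alpha_{\beta,\gamma}(\text{induced})\neq 0$. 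But now the real problem appears: the induced module has nonzero restriction at pairs $(\beta,\gamma)$ for which the conclusion of the lemma is \emph{false}. Concretely, in type $\mathrm A_3$ with $\alpha=\alpha_1+\alpha_2+\alpha_3$ and minimal pair $(\beta_0,\gamma_0)=(\alpha_2+\alpha_3,\alpha_1)$, the Shuffle Lemma shows $\Res^\alpha_{\beta_0,\gamma_0}\bigl(L(\gamma_0)\circ L(\beta_0)\bigr)\neq 0$, yet $\beta_0=\alpha_2+\alpha_3$ is a single root $\succ\alpha$ that cannot be written as a sum of roots $\preceq\alpha$. So controlling the restriction of the induced module does not bound the restriction of $L(\alpha)$ in the way you need. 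The underlying reason is exactly what you hint at but do not resolve: the inductive hypothesis for $L(\beta_0)$ gives bounds relative to $\beta_0$, and since $\beta_0\succ\alpha$ these say nothing about being $\preceq\alpha$.

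The paper's argument avoids this by working directly with $L(\alpha)$ rather than an overlying induced module. It first reduces to the key case where $\beta$ is itself a positive root, then argues by contradiction, choosing a counterexample with $\beta$ \emph{maximal}. The maximality forces any irreducible submodule of $\Res^\alpha_{\beta,\gamma}L(\alpha)$ to have the form $L(\beta)\boxtimes L(\mu)$, and Frobenius reciprocity then produces a nonzero map $L(\beta)\circ L(\mu)\to L(\alpha)$. A case split on whether $\beta\succeq\mu_1$ or $\beta\prec\mu_1$, using Theorem~\ref{mac2} in the first case and a further Frobenius/maximality step plus a convexity property of $\prec$ in the second, yields the contradiction. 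The crucial difference from your plan is that the argument never passes to a larger module whose word spaces are out of control; instead it extracts structural information from $L(\alpha)$ itself via adjunction and the classification of irreducibles.
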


\begin{proof}
Forget about the grading throughout this proof.
We just explain how to show that $\beta$ is a sum of positive roots
$\preceq \alpha$; the statement about $\gamma$ follows similarly.
In the next paragraph, we'll prove the following claim:

\vspace{1mm}
\noindent{\bf Claim.}
{\em If $\Res^{\alpha}_{\beta,\gamma} L(\alpha) \neq 0$
for $\beta \in R^+$, $\gamma \in Q^+$ with 
$\beta+\gamma = \alpha$ then $\beta \preceq \alpha$.}

\vspace{1mm}

\noindent
Now suppose that we are in the situation of the lemma for non-zero
$\beta,\gamma$.
Let $L(\lambda) \boxtimes L(\mu)$ be a composition factor of $\Res^{\alpha}_{\beta,\gamma} L(\alpha)$ for some $\lambda \in \KP(\beta)$
and
$\mu \in \KP(\gamma)$.
Then it's clear that $\Res^{\alpha}_{\lambda_1, \alpha - \lambda_1}
L(\alpha)$ is non-zero.
Invoking the claim, we deduce that $\lambda_1 \preceq \alpha$.
Hence the parts of $\lambda$ are positive roots $\preceq \alpha$ 
summing to $\beta$, and we are done.

To prove the claim,
suppose for a contradiction that it is false, and look at the
counterexample in which $\beta$ is maximal; of course we have that
$\beta \succ \alpha$ and $\gamma \neq 0$.
Let $L(\lambda) \boxtimes L(\mu)$ be a submodule
of 
$\Res^{\alpha}_{\beta,\gamma} L(\alpha)$
for $\lambda \in \KP(\beta), \mu \in \KP(\gamma)$.
As in the previous paragraph, we have that 
$\Res^{\alpha}_{\lambda_1, \alpha-\lambda_1}
L(\alpha) \neq 0$, so by the maximality of $\beta$
we must have that $\lambda_1 \preceq \beta$.
Since $(\beta)$ is minimal in $\KP(\beta)$ this implies that
$\lambda = (\beta)$.
We have shown that $L(\beta) \boxtimes L(\mu)$ is a submodule of 
$\Res^{\alpha}_{\beta,\gamma} L(\alpha)$
for some $\mu \in \KP(\gamma)$.
By Frobenius reciprocity, we get a non-zero homomorphism
$L(\beta) \circ L(\mu) \rightarrow L(\alpha)$.
If $\beta \succeq \mu_1$ then $L(\beta) \circ L(\mu)$ is a
quotient of $\bar\Delta(\beta\sqcup \mu)$, where we write
$\beta\sqcup\mu$ for the Kostant partition obtained from $\mu$ by
adding the root $\beta$ to the beginning.
By Theorem~\ref{mac2} we know that 
$\bar\Delta(\beta\sqcup\mu)$ has irreducible head $L(\beta \sqcup
\mu)$, and we deduce that $L(\beta \sqcup \mu)\cong L(\alpha)$. This shows that
$\beta \sqcup \mu = (\alpha)$, hence $\beta = \alpha$,
contradicting $\beta \succ \alpha$.

So we have that $\beta \prec \mu_1$. Let  
$\bar\mu \in \KP(\gamma-\mu_1)$
be the (possibly empty) Kostant partition obtained from $\mu$ by removing its first
part, 
so $\mu = \mu_1 \sqcup \bar\mu$.
Since $L(\mu)$ is a quotient of $L(\mu_1) \circ \bar\Delta(\bar\mu)$,
Frobenius reciprocity gives us a non-zero homomorphism
$(L(\beta) \circ L(\mu_1)) \boxtimes \bar\Delta(\bar\mu) \rightarrow
\Res^{\alpha}_{\beta+\mu_1, \gamma-\mu_1} L(\alpha)$.
Hence there's some  $\nu \in \KP(\beta+\mu_1)$ such that
$L(\nu) \boxtimes L(\bar\mu)$ is a composition factor of
$\Res^{\alpha}_{\beta+\mu_1, \gamma-\mu_1} L(\alpha)$.
As above, the maximality of $\beta$ implies that $\nu_1 \preceq
\beta$, hence all parts of $\nu$ are $\preceq \beta$.
The parts of $\nu$ sum to $\beta+\mu_1$, so we deduce that
$\beta+\mu_1$ is a sum of roots $\preceq \beta$, as well as already
being a sum of $\beta$ and another root $\mu_1 \succ \beta$.
This contradicts a general property of convex orders (e.g. see
\cite[Lemma 2.4]{BKM}).
\end{proof}

\subsection*{Standard modules and homological algebra}
Just as the proper standard modules $\bar\Delta(\lambda)$ categorify
the dual PBW basis vectors $r_\lambda^*$, there are {\em standard
  modules}
$\Delta(\lambda)$ which categorify the PBW basis vectors $r_\lambda$.
The definition of these parallels the definition of the PBW basis elements:
we first define {\em root modules} $\Delta(\alpha)$ for each
$\alpha \in R^+$ so that $[\Delta(\alpha)] = r_\alpha$, 
then introduce {\em divided power modules}
$\Delta(\alpha^m)$ for $\alpha \in R^+$ and $m \geq 1$
so that $[\Delta(\alpha^m)] = r_\alpha^m / [m]!$, then finally
set
\begin{equation}\label{D}
\Delta(\lambda) := \Delta(\mu_1^{m_1}) \circ\cdots \circ
\Delta(\mu_k^{m_k})
\end{equation}
for a Kostant partition $\lambda$ written as
$\lambda = (\mu_1^{m_1},\dots,\mu_k^{m_k})$
for $\mu_1 \succ \cdots \succ \mu_k$.
It is then automatic from (\ref{tu}) that
$[\Delta(\lambda)] = r_\lambda$.

Perhaps the easiest way to define the root module $\Delta(\alpha)$ is
by inducing the cuspidal module $L(\alpha)$ from the previous
subsection (although this only works in simply-laced types).
Given $\alpha \in R^+$ of height $n$, let $H_\alpha'$ be the subalgebra of
$H_\alpha$ generated by the elements $\{1_\bi, \tau_j, x_j - x_{j+1}\:|\:\bi \in
\W_\alpha, 1 \leq j \leq n-1\}$. Then set
\begin{equation}\label{dal}
\Delta(\alpha) := H_\alpha \otimes_{H_{\alpha}'} L(\alpha).
\end{equation}
Recalling (\ref{centraldef}), we can choose the special word $\bi$ there so that 
$i_1$ appears with a multiplicity $k$ which is non-zero in the field
$\K$;  this is always possible by inspection of the ADE root systems.
Then, letting $z := z_1+\cdots+z_k \in Z(H_\alpha)_2$, we have that
$H_\alpha = H_\alpha' \otimes \K[z]$ as an algebra.
Hence we have that 
$\Delta(\alpha) = L(\alpha) \boxtimes \K[z]$ as a module, i.e. it is
an infinite self-extension of copies of $L(\alpha)$.
The following is obvious from this description combined with
Schur's Lemma.

\begin{Lemma}\label{rootmod}
The root module $\Delta(\alpha)$ has irreducible head $L(\alpha)$,
and we have that $[\Delta(\alpha)] = [L(\alpha)] / (1-q^2)$ in the Grothendieck
group, hence $[\Delta(\alpha)] = r_\alpha$.
Moreover $\End_{H_\alpha}(\Delta(\alpha)) = \K[z]$.
\end{Lemma}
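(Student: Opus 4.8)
\emph{Proof proposal.} The plan is to read off all three assertions from the structural description recorded immediately before the statement: $H_\alpha = H_\alpha' \otimes \K[z]$ as graded algebras with $z$ central of degree $2$, and consequently
$$
\Delta(\alpha) = H_\alpha \otimes_{H_\alpha'} L(\alpha) = \bigoplus_{m\geq 0} z^m L(\alpha)
$$
as graded $H_\alpha$-modules, where $H_\alpha'$ preserves each summand $z^m L(\alpha)\cong q^{2m}L(\alpha)$ and multiplication by $z$ sends $z^mL(\alpha)$ isomorphically onto $z^{m+1}L(\alpha)$. The one preliminary I would record is that $z$ annihilates $L(\alpha)$: being central of positive degree it cannot act invertibly on a finite dimensional module, so $\operatorname{Ker}(z|_{L(\alpha)})$ is a nonzero, hence full, submodule. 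Thus the $H_\alpha$-action on $L(\alpha)$ factors through $H_\alpha'=H_\alpha/(z)$, the $H_\alpha$- and $H_\alpha'$-submodule lattices of $L(\alpha)$ coincide, so $L(\alpha)$ is irreducible over $H_\alpha'$, and by Schur's Lemma together with a comparison of graded dimensions $\End_{H_\alpha'}(L(\alpha)) = \K$ in degree $0$.

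For the head, I would give a graded Nakayama argument. The submodule $z\Delta(\alpha) = \bigoplus_{m\geq 1}z^mL(\alpha)$ has irreducible quotient $L(\alpha)$, so it suffices to show any proper submodule $M \subsetneq \Delta(\alpha)$ lies inside it. If not, $M$ surjects onto the irreducible $\Delta(\alpha)/z\Delta(\alpha)$, so $M + z\Delta(\alpha) = \Delta(\alpha)$; since $\Delta(\alpha)$ is bounded below and $(z\Delta(\alpha))_n = z\Delta(\alpha)_{n-2}$, an induction on $n$ from the bottom degree upwards yields $M_n = \Delta(\alpha)_n$ for all $n$, a contradiction. Hence $\operatorname{rad}\Delta(\alpha) = z\Delta(\alpha)$ and the head is $L(\alpha)$.

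For the Grothendieck-group identity, the descending chain $\Delta(\alpha)\supseteq z\Delta(\alpha)\supseteq z^2\Delta(\alpha)\supseteq\cdots$ has intersection $0$ and successive subquotients $q^{2m}L(\alpha)$, so in $\f = \Q(q)\otimes_{\Z[q,q^{-1}]}[\Rep(H)]$ — the setting in which classes of such bounded-below locally finite modules are taken, exactly as with $\theta_i = [L(i)]/(1-q^2)$ earlier — one gets $[\Delta(\alpha)] = \sum_{m\geq 0}q^{2m}[L(\alpha)] = [L(\alpha)]/(1-q^2)$. Feeding in $[L(\alpha)] = r_\alpha^*$ (the characterization of the cuspidal module, via Theorem~\ref{canb} in characteristic $0$ and via the construction of cuspidal modules in general) together with $r_\alpha^* = (1-q^2)r_\alpha$ gives $[\Delta(\alpha)] = r_\alpha$.

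Finally, for the endomorphism algebra: multiplication by $z$ embeds $\K[z]\hookrightarrow\End_{H_\alpha}(\Delta(\alpha))$, with $z^m$ homogeneous of degree $2m$. Conversely, $\Delta(\alpha)$ is generated over $H_\alpha$ by the copy $V := L(\alpha)\otimes 1\cong L(\alpha)$, so a homogeneous endomorphism $\phi$ of degree $d$ is determined by its restriction to $V$, and that restriction is an $H_\alpha'$-homomorphism $V\to\bigoplus_{m\geq 0}z^mL(\alpha)$; each component is a degree-$d$ $H_\alpha'$-map $L(\alpha)\to q^{2m}L(\alpha)$, which by the computation of $\End_{H_\alpha'}(L(\alpha))$ is zero unless $d = 2m$ and is then a scalar multiple of $v\mapsto v\otimes z^m$. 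Hence $\phi|_V$ agrees with $(cz^{d/2})|_V$ for some $c\in\K$ when $d\geq 0$ is even, and is $0$ otherwise; since $\phi$ and $cz^{d/2}$ are $H_\alpha$-endomorphisms agreeing on a generating set, $\phi = cz^{d/2}$, so $\End_{H_\alpha}(\Delta(\alpha)) = \K[z]$. The whole argument is routine once $H_\alpha = H_\alpha'\otimes\K[z]$ is granted; the only point I would take care over is the grading bookkeeping in this last step and, relatedly, pinning down $\End_{H_\alpha'}(L(\alpha)) = \K$, rather than any genuine obstacle.
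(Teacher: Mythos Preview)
Your argument is correct and is precisely the intended one: the paper's proof consists of the single sentence ``obvious from this description combined with Schur's Lemma,'' and you have simply written out the details of that description (the graded Nakayama argument for the head, the filtration for the Grothendieck-group identity, and the generator-plus-Schur computation for the endomorphism ring). There is nothing to add.
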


There is also a quite different recursive description of
$\Delta(\alpha)$ which is essentially a
categorification of Lemma~\ref{ral}.
First if $\alpha = \alpha_i$ for $i \in I$ then we have that
$\Delta(\alpha)  \cong H_{\alpha_i}$.
Then if $\alpha \in R^+$ is of height at least two, we pick a minimal
pair
$(\beta,\gamma)$ for $\alpha$. As $[\Delta(\beta)] = (1+q^2+q^4+\cdots)[L(\beta)]$,
all composition factors of $\Delta(\beta)$ are isomorphic to
$L(\beta)$ up to a degree shift.
Similarly all composition factors of $\Delta(\gamma)$ are isomorphic
to  $L(\gamma)$ up to a degree shift.
Now an application of McNamara's Lemma reveals that there is only one non-zero layer in the
Mackey
filtration of $\Res^{\alpha}_{\beta,\gamma} \Delta(\gamma)\circ \Delta(\beta)$
from Theorem~\ref{mackey}, and this layer is isomorphic to
$q \Delta(\beta) \boxtimes \Delta(\gamma)$.
Hence $\Res^\alpha_{\beta,\gamma}
\Delta(\gamma) \circ \Delta(\beta)\cong q \Delta(\beta) \boxtimes
\Delta(\gamma)$.
Frobenius reciprocity then gives
us a canonical homomorphism
$$
\phi:q \Delta(\beta)\circ \Delta(\gamma)
\rightarrow \Delta(\gamma)\circ \Delta(\beta),
\qquad 1_\alpha \otimes (v_1 \otimes v_2) 
\mapsto \tau_w \otimes (v_2 \otimes v_1)
$$
where $w$ is the permutation $(1,\dots,m+n) \mapsto
(n+1,\dots,m+n,1,\dots,n)$
for $m := \height(\beta)$ and $n := \height(\gamma)$.

\begin{Theorem}[Brundan-Kleshchev-McNamara]\label{Ses}
The homomorphism $\phi$ just defined is injective
and its cokernel is isomorphic to the root module $\Delta(\alpha)$
from (\ref{dal}). Hence there is a short exact sequence
$$
0 \longrightarrow q \Delta(\beta) \circ \Delta(\gamma)
\stackrel{\phi}{\longrightarrow} 
\Delta(\gamma)\circ \Delta(\beta)
\longrightarrow \Delta(\alpha) \longrightarrow 0.
$$
\end{Theorem}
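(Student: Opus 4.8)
The plan is to establish the two assertions --- injectivity of $\phi$ and $\operatorname{coker}\phi\cong\Delta(\alpha)$ --- by systematically exploiting the exact restriction functors, the Mackey filtration of Theorem~\ref{mackey}, McNamara's Lemma, and the central element $z$ entering the construction (\ref{dal}). Since $(\beta,\gamma)$ is a minimal pair, the order-theoretic input needed is precisely the convexity facts already used in the proof of McNamara's Lemma, and these will force the relevant Mackey filtrations to collapse.

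The first step is the restriction computation. By Lemma~\ref{rootmod} every composition factor of $\Delta(\beta)$ (resp.\ $\Delta(\gamma)$) is a degree shift of $L(\beta)$ (resp.\ $L(\gamma)$); so McNamara's Lemma shows that a Mackey layer of $\Res^\alpha_{\beta,\gamma}(\Delta(\gamma)\circ\Delta(\beta))$ indexed by data $(\beta_1,\beta_2,\gamma_1,\gamma_2)$ is nonzero only when $\beta_1$ is a sum of positive roots $\preceq\beta$, $\beta_2$ a sum of roots $\succeq\beta$, $\gamma_1$ a sum of roots $\preceq\gamma$ and $\gamma_2$ a sum of roots $\succeq\gamma$. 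The recombination rule forces $\gamma_1=\beta_2$, and as $\beta\succ\gamma$ the convexity property that no nonzero element of $Q^+$ is simultaneously a sum of positive roots $\preceq\gamma$ and a sum of positive roots $\succeq\beta$ (cf.\ \cite[Lemma 2.4]{BKM}) forces $\gamma_1=\beta_2=0$. Hence only the bottom layer survives and $\Res^\alpha_{\beta,\gamma}(\Delta(\gamma)\circ\Delta(\beta))\cong q\,\Delta(\beta)\boxtimes\Delta(\gamma)$, as noted just before the theorem; the identical argument gives $\Res^\alpha_{\beta,\gamma}(\Delta(\beta)\circ\Delta(\gamma))\cong\Delta(\beta)\boxtimes\Delta(\gamma)$. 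Since $\phi$ corresponds under Frobenius reciprocity to this chosen isomorphism, $\Res^\alpha_{\beta,\gamma}\phi$ is itself an isomorphism, and exactness of restriction gives $\Res^\alpha_{\beta,\gamma}(\ker\phi)=\Res^\alpha_{\beta,\gamma}(\operatorname{coker}\phi)=0$.

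The second step produces a surjection $\operatorname{coker}\phi\twoheadrightarrow\Delta(\alpha)$. Because $z$ acts as $0$ on $L(\alpha)$ we have $\Delta(\alpha)=L(\alpha)\otimes\K[z]$ with $z$ the polynomial variable, so $\Res^\alpha_{\beta',\gamma'}\Delta(\alpha)\cong(\Res^\alpha_{\beta',\gamma'}L(\alpha))\otimes\K[z]$ for every decomposition. In particular $\Res^\alpha_{\beta,\gamma}\Delta(\alpha)=0$, since $\Res^\alpha_{\beta,\gamma}L(\alpha)=0$: a nonzero restriction would by McNamara's Lemma make the root $\beta\succ\alpha$ a sum of positive roots $\preceq\alpha$, impossible by convexity. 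On the other hand $\Res^\alpha_{\gamma,\beta}L(\alpha)\neq0$, and by McNamara's analysis of minimal pairs it is the irreducible module $q^{\beta\cdot\gamma}L(\gamma)\boxtimes L(\beta)$, which generates $\Delta(\alpha)$ over $H_\alpha$; hence the counit $\Ind^\alpha_{\gamma,\beta}\Res^\alpha_{\gamma,\beta}\Delta(\alpha)\twoheadrightarrow\Delta(\alpha)$ is surjective. Composing $\Ind^\alpha_{\gamma,\beta}$ of a surjection $\Delta(\gamma)\boxtimes\Delta(\beta)=(L(\gamma)\boxtimes L(\beta))\otimes\K[z_\gamma,z_\beta]\twoheadrightarrow(L(\gamma)\boxtimes L(\beta))\otimes\K[z]\cong\Res^\alpha_{\gamma,\beta}\Delta(\alpha)$ with this counit gives $p\colon\Delta(\gamma)\circ\Delta(\beta)\twoheadrightarrow\Delta(\alpha)$, and Frobenius reciprocity for the source of $\phi$ shows $p\circ\phi\in\Hom_{H_\beta\otimes H_\gamma}(q\,\Delta(\beta)\boxtimes\Delta(\gamma),\Res^\alpha_{\beta,\gamma}\Delta(\alpha))=0$, so $p$ factors through a surjection $\bar p\colon\operatorname{coker}\phi\twoheadrightarrow\Delta(\alpha)$.

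It remains to show $\bar p$ is an isomorphism and $\phi$ is injective, and this is the main obstacle. The route I would take is: (i) show $\operatorname{coker}\phi$ has simple head $L(\alpha)$, using $p\circ\phi=0$ to identify $\Hom_{H_\alpha}(\operatorname{coker}\phi,L(\alpha))$ with $\Hom_{H_\gamma\otimes H_\beta}(\Delta(\gamma)\boxtimes\Delta(\beta),\Res^\alpha_{\gamma,\beta}L(\alpha))$, which is one--dimensional since $\Res^\alpha_{\gamma,\beta}L(\alpha)$ is irreducible; (ii) show every composition factor of $\operatorname{coker}\phi$, and of $\ker\phi$, is a degree shift of $L(\alpha)$ --- using $\Res^\alpha_{\beta,\gamma}(\operatorname{coker}\phi)=\Res^\alpha_{\beta,\gamma}(\ker\phi)=0$ together with the fact that the composition factors occurring are controlled by $\Delta((\beta,\gamma))=\Delta(\beta)\circ\Delta(\gamma)$ and by $\Delta(\gamma)\circ\Delta(\beta)$ (whose factors, by the cuspidal/standard--module theory behind Theorem~\ref{mac2}, are degree shifts of $L(\alpha)$ and $L((\beta,\gamma))$) and that $\Res^\alpha_{\beta,\gamma}L((\beta,\gamma))\neq0$, as $L((\beta,\gamma))$ is a quotient of $\bar\Delta((\beta,\gamma))=L(\beta)\circ L(\gamma)$; and (iii) descend modulo $z$: since $H_\alpha=H_\alpha'\otimes\K[z]$ and $\phi$ carries $1\otimes(-)$ into $1\otimes(-)$ (because $\tau_w\in H_\alpha'$), $\phi$ is the induction along $H_\alpha'\hookrightarrow H_\alpha$ of a map $\phi_0$ of graded $H_\alpha'$--modules, and as this induction is exact it suffices to prove $\phi_0$ is injective with $\operatorname{coker}\phi_0\cong L(\alpha)|_{H_\alpha'}$ irreducible; granted (i) and (ii) this is a matter of ruling out a longer composition length, which one can do either from the homological vanishing $\operatorname{Ext}^1_{H_\alpha'}(L(\alpha),q^dL(\alpha))=0$ for all $d$ (equivalently, $\operatorname{Ext}^\bullet_{H_\alpha}(L(\alpha),L(\alpha))$ is exterior on one generator in cohomological degree $1$ and internal degree $-2$) or by a direct graded--dimension count. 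The essential difficulty is that all of the \emph{character} information one can extract is automatically consistent with the theorem but, being equivalent to Lemma~\ref{ral}, cannot be used to prove it; so the module structure of the cokernel must be pinned down by the structural arguments above rather than by a dimension count alone.
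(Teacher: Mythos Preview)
Your outline is essentially the approach the paper indicates. The paper, being a survey, does not give a full proof here: it only records that the crucial input is the homological computation
\[
\operatorname{Ext}^1_{H_\alpha}(L(\alpha),L(\alpha))\cong q^{-2}\K,\qquad
\operatorname{Ext}^{\ge 2}_{H_\alpha}(L(\alpha),L(\alpha))=0,
\]
and this is exactly what you arrive at in step~(iii). Your steps 1, 2 and 3(i)--(ii) are correct and constitute the standard preliminary reductions via Mackey, Frobenius reciprocity and McNamara's Lemma; your closing remark that character information alone is circular (being equivalent to Lemma~\ref{ral}) is also exactly right.

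The only part that needs tightening is the ``descend modulo $z$'' manoeuvre in step~(iii). Asserting that $\phi$ is literally induced along $H_\alpha'\hookrightarrow H_\alpha$ from some $\phi_0$ amounts to claiming that $\Delta(\beta)\circ\Delta(\gamma)$ and $\Delta(\gamma)\circ\Delta(\beta)$ are free as $\K[z]$-modules, which is not obvious: the central element $z$ was chosen specific to $\alpha$, not compatibly with the parabolic $H_\beta\otimes H_\gamma$, so there is no a priori reason $z$ acts freely on an induced module. In the argument of \cite{BKM} one instead uses the Ext computation directly over $H_\alpha$: it implies that $\Delta(\alpha)$ is the projective cover of $L(\alpha)$ in the Serre subcategory of graded $H_\alpha$-modules all of whose composition factors are degree shifts of $L(\alpha)$, so any module in that subcategory with simple head $L(\alpha)$ is a quotient of $\Delta(\alpha)$; combined with your surjection $\bar p$ this forces $\operatorname{coker}\phi\cong\Delta(\alpha)$. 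The injectivity of $\phi$ likewise needs the Ext vanishing rather than a Grothendieck-group count, for precisely the circularity reason you identify. So you have correctly located the crux; just do not try to route around the Ext input via the $H_\alpha'$-reduction.
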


The proof of Theorem~\ref{Ses} depends in a crucial way on some 
homological algebra: it requires knowing in advance that
\begin{equation}\label{ext1}
\operatorname{Ext}^1_{H_\alpha}(L(\alpha), L(\alpha)) \cong q^{-2}\K,
\qquad
\operatorname{Ext}^i_{H_\alpha}(L(\alpha), L(\alpha)) = 0\text{ for $i
  \geq 2$.}
\end{equation}
This is surprisingly tricky to prove. McNamara found a proof assuming the finiteness of global dimension of $H_\alpha$,
and another way
based on some explicit computations which can be made just for certain special choices of the convex order (for which almost all $L(\alpha)$ are homogeneous).
He also showed how to deduce the finiteness of global dimension from (\ref{ext1}).
Thus one first proves (\ref{ext1}) for a particular convex order, then deduces the finiteness of global dimension, then from that extends (\ref{ext1}) to arbitrary convex orders, and finally uses that to prove 
Theorem~\ref{Ses}.

Next we explain how the divided powers $\Delta(\alpha^m)$ are defined.
Using the decomposition $\Delta(\alpha) = L(\alpha) \boxtimes \K[z]$,
it is easy to deduce from (\ref{ext1}) that
\begin{equation}\label{ext2}
\operatorname{Ext}^i_{H_\alpha}(\Delta(\alpha),\Delta(\alpha)) = 
\operatorname{Ext}^i_{H_\alpha}(\Delta(\alpha),L(\alpha)) = 0
\text{ for $i \geq 1$.}
\end{equation}
Using McNamara's Lemma once again, one checks that the Mackey filtration of
$\Res^{2\alpha}_{\alpha,\alpha} \Delta(\alpha) \circ \Delta(\alpha)$
has exactly two non-zero layers, $\Delta(\alpha) \boxtimes \Delta(\alpha)$
at the bottom and $q^{-2} \Delta(\alpha)\boxtimes \Delta(\alpha)$ at the top.
Then (\ref{ext2}) implies that this extension splits, hence there is a 
non-zero homogeneous homomorphism $\Delta(\alpha) \boxtimes \Delta(\alpha)
\rightarrow \Res^{2\alpha}_{\alpha,\alpha} \Delta(\alpha) \circ \Delta(\alpha)$
of degree $-2$. Applying Frobenius reciprocity we get from this a canonical 
degree $-2$ endomorphism
$$
\tau:\Delta(\alpha) \circ \Delta(\alpha) \rightarrow \Delta(\alpha) \circ \Delta(\alpha), \qquad 1_{2\alpha} \otimes (v_1 \otimes v_2)
\mapsto \tau_w \otimes (v_2 \otimes v_1),
$$
where $w:(1,\dots,2n) \mapsto (n+1,\dots,2n,1,\dots,n)$.
Recalling Lemma~\ref{rootmod}, one then shows that 
$\Delta(\alpha)$ has a {\em unique} degree 2 endomorphism $x$ such that
 $\tau \circ (1 x) = (x 1) \circ \tau + 1$ on $\Delta(\alpha) \circ \Delta(\alpha)$.
Then setting $\tau_i:= 1^{i-1} \tau 1^{m-i-1}$ and 
$x_i := 1^{i-1} x 1^{m-i}$ we obtain a right action of the nil Hecke algebra $\NH_m$ on $\Delta(\alpha)^{\circ m}$. 
Finally, recalling the idempotent $e_m \in \NH_m$ from (\ref{en}), we set
\begin{equation}
\Delta(\alpha^m) := q^{\frac{1}{2}m(m-1)}\Delta(\alpha)^{\circ m} e_m,
\end{equation}
and get from Corollary~\ref{ln}
that $\Delta(\alpha)^{\circ m} = [m]! \Delta(\alpha^m)$.
Hence we have indeed categorified the divided power $r_\alpha^m / [m]!$

This completes our sketch of the definition of the standard modules
$\Delta(\lambda)$.
The following theorem collects various
homological properties of the finite type quiver Hecke algebras.
Most of these are deduced from the definition (\ref{D}), either
using generalized Frobenius reciprocity together 
with (\ref{ext1})--(\ref{ext2}) or by arguing inductively via Theorem~\ref{Ses}.
They are reminiscent of the homological properties of a quasi-hereditary algebra. 

\begin{Theorem}[Kato, McNamara, Brundan-Kleshchev-McNamara]\label{hp}
Suppose we are given $\alpha$ of height $n$ and 
$\lambda,\mu \in \KP(\alpha)$.
\begin{itemize}
\item[(1)] The standard module $\Delta(\lambda)$ has an exhaustive 
descending filtration with $\bar\Delta(\lambda)$ at the top and all other sections of the form $q^{2d} \bar\Delta(\lambda)$ for $d > 0$.
\item[(2)] We have that $\operatorname{Ext}^i_{H_\alpha}(\Delta(\lambda), \Delta(\lambda)) =\operatorname{Ext}^i_{H_\alpha}(\Delta(\lambda), \bar\Delta(\lambda)) = 0$ for $i \geq 1$. 
\item[(3)] Setting $\bar\nabla(\mu) := \bar\Delta(\mu)^\circledast$, we have that
$\Dim \operatorname{Ext}^i_{H_\alpha}(\Delta(\lambda), \bar\nabla(\mu)) = \delta_{i,0} \delta_{\lambda,\mu}$ for $i \geq 0$.
\item[(4)] 
The projective dimension of 
$\Delta(\lambda)$ is bounded above by $n - l$, where $l$ is the number of parts of $\lambda$.
\item[(5)]
The global dimension of $H_\alpha$ is equal to $n$.
\end{itemize}
\end{Theorem}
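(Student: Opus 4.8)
The plan is to establish parts (1)--(5) of Theorem~\ref{hp} in that order, each feeding the next, with the homological input (\ref{ext1})--(\ref{ext2}) and Theorem~\ref{Ses} as the foundation and the Mackey filtration (Theorem~\ref{mackey}) together with McNamara's Lemma (Lemma~\ref{maclem}) as the main tools. For (1) I would induct on $\height(\alpha)$. The base case is a root module $\Delta(\alpha) = L(\alpha)\boxtimes\K[z]$ with $z$ central of degree $2$ (Lemma~\ref{rootmod}): multiplication by $z^d$ gives the descending filtration $\Delta(\alpha)\supset z\Delta(\alpha)\supset z^2\Delta(\alpha)\supset\cdots$ with sections $q^{2d}L(\alpha) = q^{2d}\bar\Delta((\alpha))$, which is exactly (1) for one-part $\lambda$. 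For a divided power $\Delta(\alpha^m)$, a direct summand of $\Delta(\alpha)^{\circ m}$, one feeds this filtration into the exact functor $\circ$ and analyses the resulting Mackey filtration, using McNamara's Lemma to see that only the expected layers survive (as in the text's treatment of $\Delta(\alpha)\circ\Delta(\alpha)$), to obtain a filtration of $\Delta(\alpha)^{\circ m}$ by $q^{2d}$-shifts of $L(\alpha)^{\circ m} = \bar\Delta(\alpha^m)$, then cuts down by $e_m$. For general $\lambda = (\mu_1^{m_1},\dots,\mu_k^{m_k})$ with $\mu_1\succ\cdots\succ\mu_k$, one multiplies: $\Delta(\lambda) = \Delta(\mu_1^{m_1})\circ\cdots\circ\Delta(\mu_k^{m_k})$, each factor having strictly smaller height (or covered by the first two cases) and hence a filtration by $q^{2d}$-shifts of its proper standard; since $\circ$ is exact and $\bar\Delta(\mu_1^{m_1})\circ\cdots\circ\bar\Delta(\mu_k^{m_k}) = \bar\Delta(\lambda)$, one gets a filtration of $\Delta(\lambda)$ by $q^{2d}$-shifts of $\bar\Delta(\lambda)$ with $d=0$ occurring exactly once.

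For (2) and (3) the engine is the $\operatorname{Ext}$-form of Frobenius reciprocity, $\operatorname{Ext}^i_{H_{\beta+\gamma}}(\Ind^{\beta+\gamma}_{\beta,\gamma}X, Y)\cong\operatorname{Ext}^i_{H_\beta\otimes H_\gamma}(X,\Res^{\beta+\gamma}_{\beta,\gamma}Y)$, valid because $\Ind$ is exact and sends projectives to projectives (basis theorem). Writing $\Delta(\lambda)$ as an iterated induction product and computing the restrictions of $\Delta(\mu)$ and of $\bar\nabla(\mu) = \bar\Delta(\mu)^\circledast$ via the Mackey filtration --- where McNamara's Lemma, together with the support analysis underlying Theorem~\ref{mac2}, kills all but a small controlled set of layers --- reduces everything by induction on the number of parts to the single-root facts (\ref{ext2}) and to $\operatorname{Ext}^\bullet_{H_\alpha}(\Delta(\alpha),\bar\nabla(\alpha)) = \operatorname{Ext}^\bullet_{H_\alpha}(L(\alpha)\boxtimes\K[z], L(\alpha))$, which is $\K$ concentrated in degree $0$ since $\Delta(\alpha)$ has head $L(\alpha)$. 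When $\lambda\neq\mu$ the surviving Mackey layers force a support mismatch, giving $\operatorname{Ext}^i(\Delta(\lambda),\bar\nabla(\mu)) = 0$; when $\lambda=\mu$ only the $\delta_{i,0}\K$ survives, which is (3), and (2) drops out of the same computation run with $\bar\Delta(\lambda)$ in place of $\bar\nabla(\mu)$.

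For (4) I would prove $\operatorname{pd}_{H_\alpha}\Delta(\lambda)\le\height(\alpha) - l$ by induction on $\height(\alpha)$, where $l$ is the number of parts. The key multiplicativity is $\operatorname{pd}(\Delta(\beta)\circ\Delta(\gamma))\le\operatorname{pd}\Delta(\beta) + \operatorname{pd}\Delta(\gamma)$, since $\circ$ is exact, preserves projectives, and a tensor of resolutions over $\K$ resolves $\Delta(\beta)\boxtimes\Delta(\gamma)$ over $H_\beta\otimes H_\gamma$. Feeding a minimal pair $(\beta,\gamma)$ of $\alpha\in R^+$ into the short exact sequence of Theorem~\ref{Ses} gives $\operatorname{pd}\Delta(\alpha)\le 1 + \operatorname{pd}(\Delta(\beta)\circ\Delta(\gamma))\le 1 + (\height(\beta)-1) + (\height(\gamma)-1) = \height(\alpha)-1$, with base case $\Delta(\alpha_i) = H_{\alpha_i}$ projective; since $\Delta(\alpha^m)$ is a summand of $\Delta(\alpha)^{\circ m}$ we get $\operatorname{pd}\Delta(\alpha^m)\le m(\height(\alpha)-1) = \height(\alpha^m)-m$, and multiplying out $\Delta(\lambda)$ finishes (4). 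Turning to (5), the lower bound $\operatorname{gldim}H_\alpha\ge n$ needs none of (1)--(4): by Theorem~\ref{center}, $Z := Z(H_\alpha)$ is a polynomial ring in $n$ variables and $H_\alpha$ is free --- hence flat --- over $Z$, so every $H_\alpha$-projective is $Z$-projective and a projective $H_\alpha$-resolution of a simple module $L$ is a $Z$-projective resolution; as the generators of $Z$ are positively graded they act nilpotently on the finite-dimensional $L$, so $L$ has finite length over $Z$ with $\operatorname{depth}_Z L = 0$, whence $\operatorname{pd}_Z L = \operatorname{depth} Z = n$ by Auslander--Buchsbaum and therefore $\operatorname{pd}_{H_\alpha} L\ge\operatorname{pd}_Z L = n$.

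It remains to prove the upper bound, $\operatorname{pd}_{H_\alpha}L(\mu)\le n$ for every $\mu\in\KP(\alpha)$. For $\alpha\in R^+$ this is quick: $\operatorname{rad}\Delta(\alpha) = z\Delta(\alpha)\cong q^2\Delta(\alpha)$, so $0\to q^2\Delta(\alpha)\to\Delta(\alpha)\to L(\alpha)\to 0$ gives $\operatorname{pd}L(\alpha)\le 1 + \operatorname{pd}\Delta(\alpha)\le\height(\alpha)$. For general $\mu$ I would use the affine quasi-hereditary structure of $H_\alpha$ coming out of (1)--(3) and Theorem~\ref{mac2}: the projective cover $P(\mu)$ has a \emph{finite} $\Delta$-flag with $\Delta(\mu)$ on top (finiteness since $[P(\mu):q^d\Delta(\nu)] = \dim\operatorname{Hom}_{H_\alpha}(P(\mu), q^d\bar\nabla(\nu))$ is nonzero only for finitely many $(\nu,d)$, using (3) and that $\bar\nabla(\nu)$ is finite dimensional), so $\operatorname{rad}P(\mu)$ is an extension of $\operatorname{rad}\Delta(\mu)$ by a finitely $\Delta$-flagged module. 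Granting that $\operatorname{rad}\Delta(\mu)$ itself carries a \emph{finite} $\Delta$-flag (with sections $q^d\Delta(\nu)$ for $\nu\succ\mu$ together with a single copy of $q^2\Delta(\mu)$), part (4) forces $\operatorname{pd}\operatorname{rad}P(\mu)\le n-1$, hence $\operatorname{pd}L(\mu) = 1 + \operatorname{pd}\operatorname{rad}P(\mu)\le n$; combined with the lower bound, $\operatorname{gldim}H_\alpha = n$. The main obstacle is precisely this last structural input --- that $\operatorname{rad}\Delta(\mu)$ has a finite $\Delta$-flag, so that the infinitely many self-extensions $q^{2d}\bar\Delta(\mu)$ built into $\Delta(\mu)$ by part (1) do not propagate into an infinite homological tail. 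Establishing it forces one to combine (1)--(3) with the Mackey filtration and McNamara's Lemma, and it is the heart of the affine highest weight formalism for $H_\alpha$.
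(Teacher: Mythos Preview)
The paper does not actually prove Theorem~\ref{hp}; it only sketches that (1)--(4) follow from the definition~(\ref{D}) via generalized Frobenius reciprocity, (\ref{ext1})--(\ref{ext2}), and induction through Theorem~\ref{Ses}, then defers to \cite{Kato}, \cite{Mac}, \cite{BKM} for the details. Your outline for (1)--(4) and for the lower bound in (5) follows that sketch faithfully and is essentially correct; in particular your Auslander--Buchsbaum argument for $\operatorname{gldim}H_\alpha\ge n$ via freeness over the polynomial center (Theorem~\ref{center}) is a clean way to get the lower bound.

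The genuine gap is in your upper bound for (5). The structural claim you ``grant''---that $\operatorname{rad}\Delta(\mu)$ carries a finite $\Delta$-flag whose $\Delta(\mu)$-contribution is \emph{a single} copy of $q^2\Delta(\mu)$---is false as soon as $\mu$ has $l\ge 2$ parts. Already for $l=2$, writing $\Delta(\mu)=\Delta(\beta)\circ\Delta(\gamma)$ and letting $z_1,z_2$ be the two commuting degree-$2$ endomorphisms coming from the factors, the kernel of $\Delta(\mu)\twoheadrightarrow\bar\Delta(\mu)$ is $(z_1,z_2)\Delta(\mu)$; the long exact sequence from $0\to(z_1,z_2)\Delta(\mu)\to\Delta(\mu)\to\bar\Delta(\mu)\to 0$ together with (3) gives
\[
\operatorname{Ext}^1\big((z_1,z_2)\Delta(\mu),\bar\nabla(\mu)\big)\cong\operatorname{Ext}^2\big(\bar\Delta(\mu),\bar\nabla(\mu)\big)\cong q^{-4}\K\neq 0,
\]
the last isomorphism computed from the Koszul complex on $z_1,z_2$ (all differentials vanish after applying $\operatorname{Hom}(-,\bar\nabla(\mu))$ since the nonzero map $\Delta(\mu)\to\bar\nabla(\mu)$ factors through $L(\mu)$). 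Hence $(z_1,z_2)\Delta(\mu)$, and therefore $\operatorname{rad}\Delta(\mu)$, is \emph{not} $\Delta$-filtered. The correct replacement is not a $\Delta$-flag on $\operatorname{rad}\Delta(\mu)$ but a $\Delta$-\emph{resolution} of $\bar\Delta(\mu)$: the elements $z_1,\dots,z_l$ form a regular sequence on $\Delta(\mu)$, so their Koszul complex yields an exact sequence
\[
0\to q^{2l}\Delta(\mu)\to\cdots\to(q^2\Delta(\mu))^{\oplus l}\to\Delta(\mu)\to\bar\Delta(\mu)\to 0,
\]
whence $\operatorname{pd}\bar\Delta(\mu)\le l+(n-l)=n$ by (4). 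Be warned, however, that the naive induction via $0\to\operatorname{rad}\bar\Delta(\mu)\to\bar\Delta(\mu)\to L(\mu)\to 0$ then only yields $\operatorname{pd}L(\mu)\le n+1$; closing the gap to $n$ requires a more careful argument, which is where the real work in \cite{Mac} and \cite{BKM} lies.
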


\begin{Corollary}
For $\lambda \in \KP$, let $P(\lambda)$ be the projective cover of $L(\lambda)$.
Then $P(\lambda)$ has a finite filtration with sections of the form 
$\Delta(\mu)$ (up to a degree shift) and graded multiplicities satsfying
$[P(\lambda):\Delta(\mu)] = [\bar\Delta(\mu):L(\lambda)]$.
\end{Corollary}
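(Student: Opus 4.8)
The plan is to establish the corollary by the standard BGG-reciprocity argument, adapted to the present affine setting. There are two steps: (A) show that the indecomposable projective $P(\lambda)$ admits a \emph{finite} filtration whose sections are standard modules $\Delta(\mu)$ up to degree shift; and (B), granting (A), read off the multiplicities from the duality of the various pairs of bases in play. Step (B) is formal; step (A), the construction of the $\Delta$-filtration, is where the real work lies and is the main obstacle.

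For step (B), work inside $\f = \Q(q)\otimes_{\Z[q,q^{-1}]}[\Proj(H)]$, using the identifications $[\Proj(H)] = \f_{\Z[q,q^{-1}]}$ and $[\Rep(H)] = \f^*_{\Z[q,q^{-1}]}$, the identifications $[\Delta(\nu)] = r_\nu$ and $[\bar\Delta(\nu)] = r_\nu^*$, the identification of Lusztig's form with the pairing (\ref{pairing}), and two duality facts: $\{r_\nu\}$ and $\{r_\nu^*\}$ are dual bases (Lusztig), while $\{[P(\nu)]\}$ and $\{[L(\nu)]\}$ are dual bases by Lemma~\ref{shift} and the discussion around (\ref{cb})--(\ref{dcb}). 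Since $\bar\Delta(\mu)$ is finite-dimensional, $[\bar\Delta(\mu)] = \sum_{\nu}[\bar\Delta(\mu):L(\nu)]\,[L(\nu)]$, so $([P(\lambda)],[\bar\Delta(\mu)]) = [\bar\Delta(\mu):L(\lambda)]$. On the other hand, a finite $\Delta$-filtration of $P(\lambda)$ (from step (A)) gives $[P(\lambda)] = \sum_{\nu}[P(\lambda):\Delta(\nu)]\,[\Delta(\nu)]$ by additivity of the class along the filtration, e.g. via the injective character map $\Ch$; hence $([P(\lambda)],[\bar\Delta(\mu)]) = \sum_{\nu}[P(\lambda):\Delta(\nu)]\,(r_\nu,r_\mu^*) = [P(\lambda):\Delta(\mu)]$. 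Comparing the two evaluations yields $[P(\lambda):\Delta(\mu)] = [\bar\Delta(\mu):L(\lambda)]$. (Alternatively, apply $\Hom_{H_\alpha}(-,\bar\nabla(\mu))$ directly to the $\Delta$-filtration: Theorem~\ref{hp}(3) makes this functor exact along the filtration and collapses its output to one term, while exactness of $\Hom_{H_\alpha}(P(\lambda),-)$ together with $\Dim\Hom_{H_\alpha}(P(\lambda),L(\nu)) = \delta_{\lambda,\nu}$ computes the same graded dimension as $[\bar\nabla(\mu):L(\lambda)]$, the degree shifts on the two sides matching up.)

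For step (A) the tool is a homological criterion in the spirit of Ringel's theory of $\Delta$-good modules: a finitely generated $H_\alpha$-module $M$ with $\operatorname{Ext}^1_{H_\alpha}(M,\bar\nabla(\mu)) = 0$ for all $\mu \in \KP(\alpha)$ admits a finite filtration with sections $q^d\Delta(\mu)$. This is proved by a downward induction over the finite poset $\KP(\alpha)$: one peels off a standard module at a suitable weight (detected from the class $[M]$ using that the $[\Delta(\nu)]$ form a basis of the Grothendieck group), and Theorem~\ref{hp}(2)--(3) guarantees that neither the resulting quotient nor the kernel acquires an $\operatorname{Ext}^1$-obstruction against any $\bar\nabla(\mu)$, while finiteness of global dimension (Theorem~\ref{hp}(5)) makes the process terminate. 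Since $P(\lambda)$ is projective, $\operatorname{Ext}^1_{H_\alpha}(P(\lambda),-) = 0$ identically, so the hypothesis holds vacuously and $P(\lambda)$ is $\Delta$-filtered; because $\Delta(\lambda)$ is the unique standard module with head $L(\lambda)$ and the remaining $\Delta(\mu)$ occurring satisfy $\mu \succ \lambda$ (consistent with Theorem~\ref{mac2}), the top section is $\Delta(\lambda)$. A more hands-on alternative to this criterion is to observe that $H_\alpha = \bigoplus_{\bi \in \W_\alpha} H_\alpha 1_\bi$ with $H_\alpha 1_\bi \cong \Delta(\alpha_{i_1})\circ\cdots\circ\Delta(\alpha_{i_n})$, then to straighten this induction product into the fixed convex order by repeated use of the short exact sequence of Theorem~\ref{Ses}, the Mackey filtration (Theorem~\ref{mackey}), the divided-power identity $\Delta(\alpha)^{\circ m} = [m]!\,\Delta(\alpha^m)$, and McNamara's Lemma --- exactly mirroring the expansion of $\theta_{i_1}\cdots\theta_{i_n}$ in the PBW basis of $\f$ --- and finally to use the standard closure of the class of $\Delta$-filtered modules under direct summands to descend from $H_\alpha 1_\bi$ to its summand $P(\lambda)$. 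Either way, constructing the $\Delta$-filtration is the crux; the multiplicity computation is then a formal consequence of the dualities recorded in step (B).
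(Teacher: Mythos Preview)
The paper states this corollary without proof, as an immediate consequence of Theorem~\ref{hp} (the details being deferred to \cite{Kato}, \cite{Mac}, \cite{BKM}). Your argument is correct and is precisely the standard BGG-reciprocity deduction: once step~(A) furnishes a finite $\Delta$-filtration of $P(\lambda)$, your parenthetical route to step~(B)---apply $\Hom_{H_\alpha}(-,\bar\nabla(\mu))$ along the filtration and use Theorem~\ref{hp}(3) to collapse each section---is the cleanest, and your Grothendieck-group computation via the dual pairs $\{r_\nu\},\{r_\nu^*\}$ and $\{[P(\nu)]\},\{[L(\nu)]\}$ works equally well. One minor caveat: justifying additivity of the class $[P(\lambda)]$ along the filtration ``via the injective character map $\Ch$'' is not quite right as written, since $P(\lambda)$ and the $\Delta(\nu)$ are infinite-dimensional and hence lie outside the domain of $\Ch$ as defined in (\ref{char}); the honest argument is that each $\Delta(\nu)$ has finite projective dimension by Theorem~\ref{hp}(4), so determines a well-defined class in $[\Proj(H)]$ as the Euler characteristic of a finite projective resolution, and short exact sequences of such modules are then additive there. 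Your two sketches for step~(A)---the Ext-vanishing criterion and the straightening argument via Theorem~\ref{Ses}---are both standard and both appear in the cited literature.
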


\begin{Corollary}\label{c}
For any $\lambda \in \KP$, we have that
\begin{align*}
\Delta(\lambda) &\cong
P(\lambda) \bigg/\!\!\! 
\sum_{\phantom{q^n}\mu\not\preceq\lambda\phantom{q^n}}
\!\!\!\!\!\sum_{f:P(\mu)
  \rightarrow P(\lambda)}\!\! \operatorname{im} f,&
\quad \bar\Delta(\lambda) &
\cong P(\lambda) \bigg/\!\!\!
\sum_{\phantom{q^n}\mu\not\prec\lambda\phantom{q^n}}
\!\!\!\!\!\sum_{f:P(\mu)
  \rightarrow \operatorname{rad} P(\lambda)} \!\!\operatorname{im} f,
\end{align*}
summing over all (not necessarily homogeneous) homomorphisms $f$.
\end{Corollary}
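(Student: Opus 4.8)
The plan is to realize each right-hand side as the kernel of the projective cover map of $\Delta(\lambda)$, respectively $\bar\Delta(\lambda)$, and to control these kernels using the \emph{finite} $\Delta$-flag of $P(\lambda)$ supplied by the preceding corollary together with the triangularity in Theorems~\ref{mac2} and~\ref{hp}. The first thing I would record is that $\Delta(\lambda)$, like $\bar\Delta(\lambda)$, has irreducible head $L(\lambda)$ concentrated in degree $0$: for $\bar\Delta(\lambda)$ this is the definition of $L(\lambda)$, and for $\Delta(\lambda)$ it follows from Theorem~\ref{hp}(3) with $i=0$, since $\Hom_{H_\alpha}(\Delta(\lambda),\bar\nabla(\mu))$ bounds the head constituents of $\Delta(\lambda)$. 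Thus $P(\lambda)$ is generated in degree $0$, the surjections $\pi\colon P(\lambda)\twoheadrightarrow\Delta(\lambda)$ and $\Delta(\lambda)\twoheadrightarrow\bar\Delta(\lambda)$ exist (the latter by Theorem~\ref{hp}(1)) and are unique up to a scalar, and their kernels $K:=\ker\pi$ and $\bar K$ (the kernel of $P(\lambda)\twoheadrightarrow\bar\Delta(\lambda)$) lie inside $\operatorname{rad}P(\lambda)$.

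For the first isomorphism write $N$ for the submodule on the right. By Theorem~\ref{hp}(1) and Theorem~\ref{mac2} every composition factor of $\Delta(\lambda)$ has the form $q^dL(\mu)$ with $\mu\preceq\lambda$; hence for $\mu\not\preceq\lambda$ the composite of $\pi$ with any $f\colon P(\mu)\to P(\lambda)$ vanishes, since a nonzero image would be a quotient of $P(\mu)$ and so would carry $L(\mu)$ as a composition factor. Thus $N\subseteq K$ and $P(\lambda)/N\twoheadrightarrow\Delta(\lambda)$. For the reverse inclusion $K\subseteq N$: by the preceding corollary $P(\lambda)$ has a finite $\Delta$-flag, and $[P(\lambda):\Delta(\mu)]=[\bar\Delta(\mu):L(\lambda)]\ne0$ forces $\mu\succeq\lambda$ (Theorem~\ref{mac2}), with $\Delta(\lambda)$ occurring exactly once; comparing simple heads shows the top section of any such flag must be $\Delta(\lambda)$ itself, so $K$ inherits a finite $\Delta$-flag whose sections are $q^e\Delta(\mu)$ with $\mu\succ\lambda$. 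Now induct on the length of this flag: lifting the projective cover $q^eP(\mu)\twoheadrightarrow q^e\Delta(\mu)$ of the top section of $K$ along $K\twoheadrightarrow K/K'$ produces a map $q^eP(\mu)\to P(\lambda)$ whose image lies in $N$ (since $\mu\not\preceq\lambda$) and which together with $K'$ spans $K$; as $K'\subseteq N$ by induction, $K\subseteq N$. Hence $K=N$ and $\Delta(\lambda)\cong P(\lambda)/N$.

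For the second isomorphism write $\bar N$ for the right-hand submodule. Splitting the defining sum over $\mu$ and using that for $\mu\ne\lambda$ every map $P(\mu)\to P(\lambda)$ automatically has image in $\operatorname{rad}P(\lambda)$ (otherwise it would surject onto $L(\lambda)$, forcing $\mu=\lambda$), one finds $\bar N=N+\operatorname{rad}\bigl(\End_{H_\alpha}(P(\lambda))\bigr)\cdot P(\lambda)$, because the $\mu=\lambda$ contribution is the submodule generated by the non-surjective (equivalently, radical) endomorphisms of the indecomposable projective $P(\lambda)$. Pushing this through $\pi$ identifies $P(\lambda)/\bar N$ with $\Delta(\lambda)/\mathcal G$, where $\mathcal G:=\sum_{g\colon P(\lambda)\to\operatorname{rad}\Delta(\lambda)}\operatorname{im}g$ is the largest $L(\lambda)$-generated submodule of $\operatorname{rad}\Delta(\lambda)$. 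It then remains to prove $\mathcal G=\ker(\Delta(\lambda)\to\bar\Delta(\lambda))$. One inclusion is clear: $\operatorname{rad}\bar\Delta(\lambda)$ has no composition factor $L(\lambda)$ by Theorem~\ref{mac2}, so each such $g$ dies in $\bar\Delta(\lambda)$ and $\mathcal G\subseteq\ker(\Delta(\lambda)\to\bar\Delta(\lambda))$. For the other, Theorem~\ref{hp}(1) gives $\ker(\Delta(\lambda)\to\bar\Delta(\lambda))$ an exhaustive filtration by shifts of $\bar\Delta(\lambda)$; since $\bar\Delta(\lambda)$ has simple head $L(\lambda)$, an induction along this filtration shows the head of $\ker(\Delta(\lambda)\to\bar\Delta(\lambda))$ consists only of shifts of $L(\lambda)$, so that module is $L(\lambda)$-generated, whence $\ker(\Delta(\lambda)\to\bar\Delta(\lambda))\subseteq\mathcal G$. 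Therefore $P(\lambda)/\bar N\cong\Delta(\lambda)/\ker(\Delta(\lambda)\to\bar\Delta(\lambda))\cong\bar\Delta(\lambda)$.

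I expect the main obstacle to be the bookkeeping around the $\Delta$-flag of $P(\lambda)$: combining the multiplicity formula $[P(\lambda):\Delta(\mu)]=[\bar\Delta(\mu):L(\lambda)]$, the triangularity of Theorem~\ref{mac2}, and the head comparison to pin down that the top section is $\Delta(\lambda)$ --- this is what lets $K$ carry a \emph{finite} $\Delta$-flag on which the inductive step can run. For the second part the analogous delicate point is that the filtration of $\ker(\Delta(\lambda)\to\bar\Delta(\lambda))$ by shifts of $\bar\Delta(\lambda)$ is genuinely infinite, forcing the argument via its head rather than by induction on length. Throughout I would forget the grading when invoking the definitions of $N$ and $\bar N$ (which range over all, not necessarily homogeneous, homomorphisms), keeping track of it only to locate images inside $\operatorname{rad}P(\lambda)$.
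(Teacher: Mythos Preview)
The paper states this corollary without proof, so there is no argument to compare against directly. Your approach is the natural one and is correct: use the finite $\Delta$-flag of $P(\lambda)$ from the preceding corollary (with sections indexed by $\mu\succeq\lambda$ and top section $\Delta(\lambda)$) to identify $K$ with $N$ for the first isomorphism, then peel off the $\mu=\lambda$ contribution and analyze the $\bar\Delta$-filtration of $\Delta(\lambda)$ for the second.

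Two small points are worth tightening. First, in the step ``the head of $\ker(\Delta(\lambda)\to\bar\Delta(\lambda))$ consists only of shifts of $L(\lambda)$, so that module is $L(\lambda)$-generated'', the implication needs that the relevant quotient actually has a simple top. This holds because $H_\alpha$ is free of finite rank over its polynomial center (Theorem~\ref{center}), hence Noetherian, so $\ker(\Delta(\lambda)\to\bar\Delta(\lambda))$ and any quotient of it are finitely generated and therefore have maximal submodules. Second, your ``induction along the filtration'' showing every simple quotient of $M_1:=\ker(\Delta(\lambda)\to\bar\Delta(\lambda))$ is a shift of $L(\lambda)$ is really a degree argument: a surjection $M_1\twoheadrightarrow q^e L(\mu)$ either factors through $M_1/M_2\cong q^2\bar\Delta(\lambda)$ (forcing $\mu=\lambda$) or restricts nontrivially to $M_2$, and one iterates; since $M_d$ lives in degrees $\geq 2d+c$ this terminates. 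You clearly anticipated this in your final paragraph, but it would be good to make the Noetherian/degree justification explicit rather than calling it ``induction''.
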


\subsection*{Notes}
The key facts about the algebra $\f$ summarized here are all proved in Lusztig's book \cite{Lubook}.
The ``bilexicographic'' partial order $\preceq$ was introduced originally in \cite[$\S$3]{Mac}.
The triangularity (\ref{tr2}) of the bar involution on the dual PBW basis 
follows from Theorem~\ref{mac2}; more direct proofs avoiding quiver Hecke algebras also exist.

Cuspidal modules and the definition (\ref{psm}) 
arose first in the work of Kleshchev and Ram \cite{KR2}, which only treats certain rather special convex orders as in \cite{Lec}.
Kato then gave a completely different approach to the construction of cuspidal modules
working in the framework of \cite{VV}, and based on the geometric definition of 
so-called {\em Saito reflection functors} which 
categorify Lusztig's braid group action. McNamara subsequently found an elegant algebraic treatment
which can be found in \cite[Theorem 3.1]{Mac}.
The positivity of the coefficients of $p_{\lambda,\mu}(q)$ from Corollary~\ref{ka} 
was conjectured a long time ago by Lusztig and proved for the first
time in full generality by Kato in \cite{Kato}.
It is one of the first genuine applications of quiver Hecke
algebras to the theory of canonical bases.

A geometric approach to the definition of
standard modules can be found in \cite{Kato}. The construction
explained here is based instead on our article \cite{BKM}, which also
covers the non-simply-laced finite types BCFG;
non-simply-laced types are more complicated 
as we cannot exploit the naive approach of inducing from $H_\alpha'$.
The homological properties in Theorem~\ref{hp}
are proved in \cite{Kato}, \cite{Mac} and \cite{BKM}. In 
particular the finiteness of global dimension for simply-laced types
in characteristic zero was established originally by Kato; in type A
it can also be deduced from \cite{OS}.
The exact value for the global dimension of $H_\alpha$ was determined
later by McNamara; in type A this 
follows also from Theorem~\ref{unpub}.

\small

\frenchspacing

\end{document}